\definecolor{webgreen}{rgb}{0,.5,0}
\definecolor{webbrown}{rgb}{.8,0,0}
\definecolor{emphcolor}{rgb}{0.95,0.95,0.95}
\ifpdf \hypersetup{pdftex,
            pdfstartview=FitH, 
            bookmarksopen=true,
            bookmarksnumbered=true
} \else \hypersetup{dvips} \fi
\newcommand {\ud}{{\rm d}}
\numberwithin{equation}{section}
\newtheorem{theorem}{Theorem}[section]
\newtheorem{proposition}{Proposition}[section]
\newtheorem{corollary}{Corollary}[section]
\newtheorem{remark}{Remark}[section]
\newtheorem{lemma}{Lemma}[section]
\newtheorem{assump}{Assumption}[section]
\numberwithin{remark}{section} \numberwithin{proposition}{section}
\numberwithin{corollary}{section}
\newcommand {\R}{\mathbb{R}}
\newcommand {\p}{\mathbb{P}}
\newcommand {\E}{\mathbb{E}}
\newcommand{\diff}{{\rm d}}
\newcommand{\lev}{L\'{e}vy }
\newcommand{\e}{\mathbb{E}}
\begin{document}
	\title[Hybrid cont. and per. barrier strat. in the dual model:
	optimality and fluctuation identities]{Hybrid continuous and periodic barrier strategies in the dual model:
		optimality and fluctuation identities}
	
	\thanks{This version: \today.  J. L. P\'erez  is  supported  by  CONACYT,  project  no.\ 241195.
K. Yamazaki is in part supported by MEXT KAKENHI grant no.\  26800092 and 17K05377.}
\author[J. L. P\'erez]{Jos\'e-Luis P\'erez$^*$}
\thanks{$*$\, Department of Probability and Statistics, Centro de Investigaci\'on en Matem\'aticas A.C. Calle Jalisco s/n. C.P. 36240, Guanajuato, Mexico. Email: jluis.garmendia@cimat.mx.  }
\author[K. Yamazaki]{Kazutoshi Yamazaki$^\dag$}
\thanks{$\dag$\, Department of Mathematics,
Faculty of Engineering Science, Kansai University, 3-3-35 Yamate-cho, Suita-shi, Osaka 564-8680, Japan. Email: kyamazak@kansai-u.ac.jp.   }
\date{}
	\maketitle
	
	
		\begin{abstract}  
Avanzi et al.\ \cite{ATW2} recently studied an optimal dividend problem where dividends are paid
both periodically and continuously with different  transaction costs.  
In the Brownian model with Poissonian periodic dividend payment opportunities, they showed that the optimal strategy is either of the pure-continuous, pure-periodic, or hybrid-barrier type.  In this paper, we generalize the results of their previous study to the dual (spectrally positive L\'evy) model.  The optimal strategy is again of the hybrid-barrier type and can be concisely expressed using the scale function.  These results are confirmed through a sequence of numerical experiments. \\
\noindent \small{\noindent  AMS 2010 Subject Classifications: 60G51, 93E20, 91B30 \\ 
\textbf{Keywords:} dividends; \lev processes; periodic strategies; scale functions; dual 
model.
}\\
	\end{abstract}
	
	\section{Introduction}

In de Finetti's optimal dividend problems, the aim is to maximize the expected net present value (NPV) of the dividends accumulated until ruin. 
In other words, the objective is to find the optimal dividend strategy, namely the one that always strikes a
 balance between maximizing dividends and minimizing the risk of ruin. 
 Because the surplus process is typically assumed to be a time-homogeneous Markov process,  the optimal strategy can reasonably be assumed
 to be of \emph{barrier type}: dividends are paid such that the surplus does not exceed a certain level.  Given this conjecture, one can follow the classical 
 ``\emph{guess and verify}" procedure: the candidate barrier (or free boundary) is chosen by the continuous/smooth fit principle, and  the optimality is shown via verification arguments.  While the optimality of a barrier strategy can fail in some cases (see \cite{Loeffen2008}), all existing explicit solutions are,
 to the best of our knowledge, barrier strategies  or variations thereof.

In this paper, we consider the version of the optimal dividend problem considered in \cite{ATW2}, where dividends are paid both periodically and continuously.  The periodic dividend decision times arrive at the jump times of an independent Poisson process, as described in \cite{ATW} and \cite{YP2016}, and, in addition, one has the option to pay dividends at any time but at a different (proportional) transaction cost.  Avanzi et al.\ \cite{ATW2} studied the case wherein the surplus is driven by a (drifted) Brownian motion.  In this paper, we show that analogous results can be attained when the surplus process is generalized to be a spectrally positive \lev process.

The objective is to show the optimality of the \emph{hybrid (continuous and periodic) barrier strategy}.
Namely, for some periodic barrier $a$ and continuous barrier $b$ such that $0 \leq a \leq b$, 
the surplus process is pushed down to $a$ whenever it is above $a$ at the periodic dividend decision times, while it is also pushed down continuously so that it does not go above $b$ uniformly in time.  In particular, this strategy reduces to the \emph{pure continuous barrier strategy} when $a = b$, and to the \emph{pure periodic barrier strategy}  when $b = \infty$.  For a more detailed discussion of this strategy, see \cite{ATW2}. 



To solve this problem, we take the following steps:
\begin{enumerate}
\item First, the expected NPVs of the (both periodically and continuously collected)  dividends under the hybrid barrier strategy for any choice of $a$ and $b$ are computed.   The controlled surplus process under the hybrid barrier strategy is the dual of the \emph{Parisian reflected process} studied in Avram et al.\ \cite{APY} with additional classical reflection caused by the  continuous barrier strategy.  We shall extend these results to obtain semi-analytical expressions for the expected NPV of the dividends via the scale function.

\item Second, the values for the periodic and continuous barriers, defined as $a^*$ and $b^*$, respectively, are selected.  As this combines the classical singular control case \cite{BKY} and the periodic case \cite{YP2016}, the value function is clearly expected to satisfy analogous smoothness conditions at these two barriers.  

More precisely, at the level $b^*$ where the process is \emph{reflected in the classical sense} \cite{BKY}, the value function is continuously differentiable (resp.\ twice continuously differentiable) when the driving \lev process is of bounded (resp.\ unbounded) variation.  In contrast, at the level $a^*$ where the process is \emph{reflected in the Parisian sense}, as in \cite{YP2016}, it is twice continuously differentiable (resp.\ thrice continuously differentiable) if it is of bounded (resp.\ unbounded) variation. See \cite{Yamazaki_smooth} and \cite{Yamazaki_contraction} for similar conditions in the optimal stopping case.

These conditions at $b^*$ and $a^*$ provide us with two equations, defined as $\mathbf{C}_b$ and $\mathbf{C}_a$, respectively.  Under the assumption that the driving process drifts to infinity, we shall show that
either there exists a pair $0 < a^* < b^*$ or $0 = a^* < b^*$ corresponding to the case of a hybrid barrier strategy, or that we should choose a pure continuous strategy (i.e.,  $a^* = b^*$) or a pure periodic barrier strategy (i.e., $b^* = \infty$).  When $0 < a^* < b^*$, $\mathbf{C}_b$ and $\mathbf{C}_a$ are simultaneously satisfied, whereas if  $0 = a^* < b^*$, $\mathbf{C}_b$ and a weaker version of $\mathbf{C}_a$ (with the equality replaced by an inequality) are satisfied.  The cases $a^* = b^*$ and $b^* = \infty$ are treated separately.


\item Finally, optimality is verified.
Again, because the problem is the combination of the classical singular and periodic cases, the candidate value function is expected to  simultaneously satisfy the sufficient conditions given in \cite{BKY} and \cite{YP2016}.  We show that these conditions are indeed sufficient, and that the candidate value function, with the barriers selected in the previous steps, satisfies these conditions.

\end{enumerate}




To confirm the analytical results obtained, we also conduct numerical experiments for the case driven by the phase-type \lev process as described in \cite{Asmussen_2004}. In this case, the scale function can be represented as  a linear combination of (complex) exponentials (see \cite{Egami_Yamazaki_2010_2}), and hence the optimal barriers $a^*$ and $b^*$ as well as the value function can be computed instantaneously.  We illustrate this computation process, and confirm both optimality and the convergence to the cases wherein pure continuous/periodic barrier strategies are optimal.



To the best of our  knowledge, this is the first paper on the joint optimization of periodic and continuous strategies for \lev processes. When there are jumps in the underlying process, the solution methods used in \cite{ATW2} for the Brownian motion model can no longer be applied. However, by expressing the expected NPVs via the scale function, a direct approach is possible without restricting the underlying \lev measure.  The same techniques are expected to be applied to other  stochastic control problems, such as inventory control \cite{Yamazaki2016} and dividend problems with capital injections \cite{APP2007, BKY},  when considering their extensions with periodic and continuous strategies.



In most optimal dividend problems,  a single barrier separates the waiting region from the controlling region. However, in this problem, three regions are separated by two barriers that are identified by the smooth fit conditions.  For this reason, the biggest challenges are to show the existence of a pair of barriers satisfying these conditions and to conduct the analysis on each of these three regions that are required for verification.  The former can be handled by observing that the desired barriers are such that a function of two variables and its partial derivative vanish simultaneously.  For the latter, although the shape of the value function differs in these three regions, finding a unifying expression is possible using the scale function. Moreover, the variational inequalities can be efficiently proven using this
unifying expression.

The rest of this paper is organized as follows. Section  \ref{dividends-strategy} reviews the spectrally positive \lev process and formulates the problem considered in this paper. 
Section \ref{section_model} then  defines the hybrid barrier strategy and constructs the corresponding controlled surplus process. Section \ref{section_verificaiton_lemma} obtains the verification lemma (sufficient conditions for optimality). 
In Section \ref{section_computation_NPV}, we review scale functions and compute the expected NPV of the dividends under the hybrid barrier strategy.  
Sections \ref{section_smooth_fit} and \ref{section_optimality} provide solutions for the case wherein the hybrid barrier strategy is optimal: we select the candidate optimal barriers in Section \ref{section_smooth_fit} and show that the corresponding candidate value function solves the required variational inequalities in Section \ref{section_optimality}.
Section \ref{section_other_cases} considers the case where pure continuous/periodic barrier strategies are optimal.  Finally, we conclude the paper by presenting some numerical results in Section \ref{section_numerics}.  
Some proofs are deferred to the appendix.

	Throughout the paper,  $f(x+) := \lim_{y \downarrow x}f(y)$ and $f(x-)  := \lim_{y \uparrow x}f(y)$ are used to indicate the right- and left-hand limits, respectively, for any function $f$ whenever they exist.
We also let 
$\Delta \zeta(s):= \zeta(s)-\zeta(s-)$ and $\Delta w(\zeta(s)):=w(\zeta(s))-w(\zeta(s-))$ for any process $\zeta$ with left-hand limits. 
	
	\section{Problem formulation}\label{dividends-strategy} 
	
	
	\subsection{Spectrally positive L\'evy processes} Let $X=(X(t); t\geq 0)$ be a L\'evy process defined on a  probability space $(\Omega, \mathcal{F}, \p)$.  For $x\in \R$, we denote by $\p_x$ the law of $X$ when it starts at $x$, and, for convenience, we write  $\p$ in place of $\p_0$. Accordingly, we  write the associated expectation operators as  $\e_x$ and $\e$. In this paper, we  assume that $X$ is \textit{spectrally positive},   which means that it has no negative jumps and is not a subordinator.  We also assume throughout this work that its Laplace exponent $\psi:[0,\infty) \to \R$, i.e.,
\[
\e\big[{\rm e}^{-\theta X(t)}\big]=:{\rm e}^{\psi(\theta)t}, \qquad t, \theta\ge 0,
\] 
is given by the \emph{L\'evy-Khintchine formula}
\begin{equation}\label{lk}
\psi(\theta):=\gamma\theta+\frac{\sigma^2}{2}\theta^2+\int_{(0, \infty)}\big({\rm e}^{-\theta z}-1+\theta z\mathbf{1}_{\{z<1\}}\big)\Pi(\ud z), \quad \theta \geq 0,
\end{equation}
where $\gamma \in \R$, $\sigma\ge 0$, and $\Pi$ is a measure on $(0, \infty)$, known as the L\'evy measure of $X$, which satisfies
$\int_{(0, \infty)}(1\land z^2)\Pi(\ud z)<\infty$.


It is well known that $X$ has paths of bounded variation if and only if $\sigma=0$ and $\int_{(0,1)} z\Pi(\mathrm{d} z) < \infty$; in this case, $X$ can be written as
\begin{equation}
X(t)=-ct+S(t), \,\,\qquad t\geq 0,\notag
\end{equation}
where 
\begin{align}
	c:=\gamma+\int_{(0,1)} z\Pi(\mathrm{d}z) \label{def_drift_finite_var}
\end{align}
and $(S(t); t\geq0)$ is a driftless subordinator. Note that  $c>0$, since we have ruled out the case where $X$ has monotone paths. In this case, its Laplace exponent is given by
\begin{equation*}
	\psi(\theta) = c \theta+\int_{(0,\infty)}\big( {\rm e}^{-\theta z}-1\big)\Pi(\ud z), \quad \theta \geq 0.
\end{equation*}
As is commonly assumed in the literature, we assume that $X$ drifts to infinity, i.e.,
\begin{align} \label{assumption_drift}
	\E [X_1] = - \psi'(0+) \in (0, \infty).
\end{align}


\subsection{Problem}
We assume that dividend payments are made both periodically and continuously.

The set of periodic dividend decision times is denoted as $\mathcal{T}_r :=(T(i); i\geq 1 )$, where $T(i)$, for each $i\geq  1$, represents the $i^{\textrm{th}}$ arrival time of the Poisson process $N^r=( N^r(t); t\geq 0) $ with intensity $r>0$, which is independent of the L\'evy process $X$. 
Let $\mathbb{F} := (\mathcal{F}(t); t \geq 0)$ be the filtration generated by the process $(X, N^r)$.

\par A strategy  $\pi := \left( L^{\pi}(t); t \geq 0 \right)$ is a nondecreasing, right-continuous, and $\mathbb{F}$-adapted process, where  the cumulative dividend amount $L^{\pi}$ has the following decomposition:
\[
L^{\pi}(t):= L^\pi_c(t) + L^\pi_p(t),\qquad\text{$t\geq0$.}
\]
Here, $L^\pi_c(t)$ is a non-decreasing, right-continuous, and $\mathbb{F}$-adapted process with $L^\pi_c(0-)=0$ that models the aggregate dividend until $t$ for the continuous strategy.   In contrast, $L^\pi_p(t)$ is a non-decreasing and $\mathbb{F}$-adapted process of the form
\begin{align}
L^\pi_p(t)=\int_{[0,t]}\nu^{\pi}(s)\diff N^r(s),\qquad\text{$t\geq0$,} \label{L_p_form}
\end{align}
for some $\mathbb{F}$-adapted c\`agl\`ad process $\nu^{\pi}$.
We note from \eqref{L_p_form} that the periodic dividend payment at time $T(i)$  is given by $\nu^\pi(T(i))$ for all $i\geq 1$. 

The surplus process $U^\pi$ obtained after dividends are deducted is such that 
\[
U^\pi(t) := X(t)- L^\pi (t) =X(t)- L^\pi_c(t) - \sum_{i=1}^{\infty}\nu^{\pi}(T(i))1_{\{T(i)\leq t\}}, \qquad\text{$0\leq t\leq \eta_0^{\pi}$}, 
\]
where $\eta_0^\pi:=\inf\{t>0:U^{\pi}(t)<0\}$
 is the corresponding ruin time.  We let $\inf \varnothing = \infty$ at this instance and other instances throughout the paper. While the payment of dividends can cause immediate ruin by setting $\Delta L^{\pi}(t) = U^{\pi}(t-) + \Delta X(t)$, it cannot exceed the surplus amount that is currently available.  In other words, we assume that
\begin{align} \label{surplus_constraint}
0\leq  \Delta L^{\pi}(t) \leq U^{\pi}(t-) + \Delta X(t), \qquad\text{for $t \geq 0$.}
\end{align}
We let  $\mathcal{A}$ be the set of all admissible strategies  that satisfy all the constraints described above.

The problem is to maximize, for $q > 0$, the expected NPV of the dividends 
associated with the strategy $\pi\in \mathcal{A}$, defined as, for $x \geq 0$,
\begin{align} \label{v_pi}
\begin{split}
	v_{\pi} (x) &:= \mathbb{E}_x \Big( \int_{[0,\eta_0^{\pi}]} {\rm e}^{-q t} \big(\beta \diff L^\pi_c(t) + \diff L^\pi_p(t) \big)  \Big) = \mathbb{E}_x \Big(\int_{[0,\eta_0^{\pi}]} {\rm e}^{-q t} \beta \diff L^\pi_c(t) + \int_{[0,\eta_0^{\pi}]} {\rm e}^{-q t}  \nu^{\pi}(t)\diff N^r(t)  \Big), 
	\end{split}
\end{align}
where $\beta$ is the ratio between the net proportion of continuous and periodic dividends received per dollar (see Remark \ref{remark_beta}).
Hence the problem is to compute the value function
\begin{equation*}
	v(x):=\sup_{\pi \in \mathcal{A}}v_{\pi}(x), \quad x \geq 0,
\end{equation*}
and to obtain the optimal strategy $\pi^*$ that achieves it, if such a strategy exists.

\begin{remark} \label{remark_beta}In \cite{ATW2}, the objective is to maximize
\begin{align*}
	V_{\pi} (x) &:= \mathbb{E}_x \Big( \int_{[0,\eta_0^{\pi}]} {\rm e}^{-q t} \big(\beta_c \diff L^\pi_c(t) + \beta_p \diff L^\pi_p(t) \big)  \Big), \quad x \geq 0,
\end{align*}
where $\beta_c, \beta_p > 0$ are the net proportions of continuous and periodic dividends received per dollar, respectively.
Solving this is equivalent to maximizing \eqref{v_pi} by setting $\beta := \beta_c / \beta_p$. 
\end{remark}

\section{Hybrid barrier strategies} \label{section_model}


In this section, we define \emph{the hybrid 
	barrier strategy} $\pi_{a,b} = (L_p^{(a,b)}, L_c^{(a,b)})$ for $b \geq a \geq 0$.  
At each of the periodic dividend decision times $\mathcal{T}_r$ where the surplus is above $a$, the excess is paid (Parisian reflection).  In contrast, as with the classical barrier strategy, dividends are also paid such that the surplus never exceeds $b$ (classical reflection). The resulting controlled surplus process then becomes the following \emph{\lev process with Parisian and classical reflection above}: 
\begin{align}
U^{(a,b)}_r(t) = X(t) - L_p^{(a,b)} (t) - L_c^{(a,b)}(t), \quad t \geq 0, \label{decomposition_U_a_b}
\end{align}
where $L_p^{(a,b)}(t)$ and $L_c^{(a,b)}(t)$ are the respective amounts of Parisian and classical reflection (dividends under the periodic and continuous barrier strategies) accumulated up to time $t$.
The formal construction of 
 $(U^{(a,b)}_r, L_p^{(a,b)}, L_c^{(a,b)})$ is given in Section \ref{section_construction_parisian_classical}.

\subsection{\lev processes with Parisian reflection above} \label{subsection_process_defined}




Before constructing the process $U^{(a,b)}_r$ given in \eqref{decomposition_U_a_b}, we first review the \emph{spectrally positive \lev process with Parisian reflection above} (and without classical reflection above), which is denoted by $X^a_r = (X^a_r(t); t \geq 0)$ for a fixed (Parisian) barrier $a$. The dual of this process is exactly  the one studied in \cite{APY}.

This process is only observed at times $\mathcal{T}_r$ and is pushed down to $a$ if and only if it is above $a$.
More specifically, we have
\begin{align} \label{X_X_r_the_same}
X_r^a(t) = X(t), \quad 0 \leq t < T_a^+(1)
\end{align}
where
\begin{align} T_{a}^+(1) := \inf\{T(i):\; X(T(i)) > a\}. \label{def_T_0_1}
\end{align}
The process then jumps down by $X(T_a^+(1))-a$ so that $X_r^a(T_a^+(1)) =a$. For $T_a^+(1) \leq t < T_a^+(2)  := \inf\{T(i) > T_a^+(1):\; X_r^a(T(i)-) > a\}$, we have $X^a_r(t) = X(t) - (X(T_a^+(1))-a)$.  The process can be constructed by repeating this procedure.

Suppose $L_p^a(t)$ is the cumulative amount of (Parisian) reflection up to time $t \geq 0$. We then have
\begin{align*}
X_r^a(t) = X(t) - L^a_p(t), \quad t \geq 0,
\end{align*}
with
\begin{align}
L^a_p(t) := \sum_{T^+_a(i) \leq t} (X_r^a(T_a^+(i)-)-a), \quad t \geq 0, \label{def_L_r}
\end{align}
where $(T_{a}^+(n); n \geq 1)$ can be constructed inductively using \eqref{def_T_0_1} and
\begin{eqnarray*} T_{a}^+(n+1) := \inf\{T(i) > T_a^+(n):\; X_r^a(T(i)-) > a\}, \quad n \geq 1.
\end{eqnarray*}

\subsection{\lev processes with Parisian and classical reflection above} \label{section_construction_parisian_classical}
We now construct the process $U_r^{(a,b)}$ by adding classical reflection in $X_r^a$. For $b \geq a \geq 0$, let
\begin{align}
U^b(t) := X(t) - L^b_c(t) \quad \textrm{where } L^b_c(t) := \sup_{0 \leq s \leq t} (X(s)-b ) \vee 0, \quad t \geq 0, \label{classical_reflected}
\end{align}
be the process classically reflected from above at $b$ (see Bayraktar et al.\ \cite{BKY}).  \par We  have
\begin{align}
U^{(a,b)}_r(t) = U^b(t), \quad 0 \leq t < \widehat{T}_a^{+} (1) \label{Y_matches}
\end{align}
where $\widehat{T}_{a}^+(1) := \inf\{T(i):\; U^{b}(T(i)) > a\}$.
The process then jumps down by $U^{b}(\widehat{T}_0^+(1))-a$ so that $U^{(a,b)}_r(\widehat{T}_a^+(1)) = a$. For $\widehat{T}_a^+(1) \leq t < \widehat{T}_a^+(2)  := \inf\{T(i) > \widehat{T}_a^+(1):\; U^{(a,b)}_r(T(i) -) > a\}$, the process $U_r^{(a,b)}(t)$ is the classical reflected process (in the form \eqref{classical_reflected}) of the shifted process $X(t) - X(\widehat{T}_a^+(1))+a$. 
The process can be constructed by repeating this procedure. 
This process admits the decomposition \eqref{decomposition_U_a_b} because the cumulative periodic dividend process becomes \eqref{def_L_r} when $X^a_r$ is replaced with $U^{(a,b)}_r$, whereas, between periodic payments, the process behaves like the classical reflected process \eqref{classical_reflected} with different starting points.

\subsection{Hybrid barrier strategies and their special cases} \label{subsection_various_strategies} 
For $0 \leq a \leq b$, the hybrid barrier strategy $\pi_{a,b} = (L_p^{(a,b)}, L_c^{(a,b)})$ is an admissible strategy, with  $L_p^{(a,b)}$ taking the form \eqref{L_p_form}.

The \emph{pure periodic barrier strategy} $\pi_{a, \infty}$ corresponds to the case $b = \infty$.  This strategy is clearly admissible and the resulting aggregate periodic dividend and controlled surplus processes become $L_p^a$ and $X_r^a$, respectively, as described in Section \ref{subsection_process_defined}.  The aggregate continuous dividend process
is uniformly zero.

The \emph{pure continuous barrier strategy} $\pi_{b,b}$ corresponds to the case $a = b$.  It is an admissible strategy, and the resulting aggregate continuous dividend and controlled surplus processes are $L_c^b$ and $U^b$, respectively, as in \eqref{classical_reflected}. The aggregate periodic dividend process
is uniformly zero.

Finally, if $0 = a < b$, liquidation occurs
at the first periodic dividend payment opportunity.

\section{Sufficient conditions for optimality} \label{section_verificaiton_lemma}


The aim of this paper is to to show the optimality of a hybrid  barrier strategy as defined in Section \ref{section_model}.  Toward this end, we first derive the \emph{verification lemma} and obtain sufficient conditions for optimality. This is a generalization of Lemma 4.1 of \cite{ATW2}. For verification of related stochastic control problems driven by spectrally one-sided \lev processes, see Section 5.4 of \cite{APP2007}, Lemma 4.1 of \cite{HPY}, and  the proof of Lemma 6.1 in \cite{Yamazaki2016}. 

We call a measurable function $g$ \emph{sufficiently smooth} if $g$ is $C^1 (0,\infty)$ (resp.\ $C^2 (0,\infty)$) when $X$ has paths of bounded (resp.\ unbounded) variation.
We let $\mathcal{L}$ be the operator for $X$ acting on a sufficiently smooth function $g$, defined by
\begin{equation}
\begin{split}
\mathcal{L} g(x)&:= -\gamma g'(x)+\frac{\sigma^2}{2}g''(x) +\int_{(0,\infty)}[g(x + z)-g(x)-g'(x)z\mathbf{1}_{\{0<z<1\}}]\Pi(\mathrm{d}z). \label{generator}
\end{split}
\end{equation}
\begin{lemma}[Verification lemma]
	\label{verificationlemma_2}
	Suppose $\hat{\pi} \in \mathcal{A}$ is such that $v_{\hat{\pi}}$ is sufficiently smooth on $(0,\infty)$
	, and satisfies
	\begin{align}
		\label{HJB-inequality_db}
		(\mathcal{L} - q)v_{\hat{\pi}}(x)+r\max_{0\leq l\leq x}\{l+v_{\hat{\pi}}(x-l)-v_{\hat{\pi}}(x)\}\leq 0,  &\quad x > 0,   \\
		v_{\hat{\pi}}'(x)\geq \beta,  &\quad x > 0.  \label{HJB-inequality_db2} 
	\end{align}
	Then $v_{\hat{\pi}}(x)=v(x)$ for all $x\geq0$ and hence $\hat{\pi}$ is an optimal strategy.
\end{lemma}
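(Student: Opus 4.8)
The plan is to use the standard verification argument: take an arbitrary admissible strategy $\pi \in \mathcal{A}$ and show $v_\pi(x) \le v_{\hat\pi}(x)$, so that $v_{\hat\pi}(x) = \sup_{\pi\in\mathcal A} v_\pi(x) = v(x)$ (the reverse inequality being trivial since $\hat\pi\in\mathcal A$). First I would extend $v_{\hat\pi}$ to all of $\mathbb{R}$ in a way that preserves sufficient smoothness (for instance, setting it equal to a suitable smooth extension on $(-\infty,0)$, or simply noting $\eta_0^\pi$ stops the process before it matters), so that $\mathcal{L}v_{\hat\pi}$ is well-defined along the controlled path. The core step is to apply a change-of-variables / It\^o formula for (possibly discontinuous) semimartingales to the process $e^{-qt} v_{\hat\pi}(U^\pi(t))$ up to the ruin time $\eta_0^\pi$, treating separately the continuous-reflection part $L^\pi_c$, the jump part of $X$, and the periodic lump-sum payments $\nu^\pi(T(i))$ at the Poisson times. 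Because $v_{\hat\pi}$ is only $C^1$ (bounded variation case) or $C^2$ (unbounded variation case), one needs the Meyer–It\^o / Peskir change-of-variables formula rather than the classical one; this is routine for spectrally one-sided L\'evy models (as in \cite{APP2007, BKY, Yamazaki2016}) but requires care with the local-time / dual-previsible-projection terms.

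After applying It\^o's formula, I would collect terms: the drift term produces $(\mathcal{L}-q)v_{\hat\pi}(U^\pi(t-))\,\mathrm{d}t$, which by \eqref{HJB-inequality_db} is bounded above by $-r\max_{0\le l\le U^\pi(t-)}\{l + v_{\hat\pi}(U^\pi(t-)-l) - v_{\hat\pi}(U^\pi(t-))\}\,\mathrm{d}t$. The compensated Poisson integral over the periodic decision times contributes, for each actual payment, $v_{\hat\pi}(U^\pi(T(i))) - v_{\hat\pi}(U^\pi(T(i)-)) = -\big(\nu^\pi(T(i)) - [v_{\hat\pi}(U^\pi(T(i)-)) - v_{\hat\pi}(U^\pi(T(i)-) - \nu^\pi(T(i)))]\big) - \nu^\pi(T(i)) + \nu^\pi(T(i))$; combining the compensator (which produces the $r\,\mathrm{d}t$ integral of the max over $l$) with the drift bound shows that the periodic part of the total contribution is dominated by $-\int_{[0,t]} e^{-qs}\nu^\pi(s)\,\mathrm{d}N^r(s)$ plus a martingale. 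For the continuous-reflection increments $\mathrm{d}L^\pi_c$, the inequality \eqref{HJB-inequality_db2}, $v_{\hat\pi}' \ge \beta$, gives $v_{\hat\pi}(U^\pi(t)) - v_{\hat\pi}(U^\pi(t-)) \le -\beta\,\Delta L^\pi_c(t)$ for jumps and the analogous bound $-\beta\,\mathrm{d}L^{\pi,\mathrm{cont}}_c$ for the continuous part (using that $L^\pi_c$ pushes the surplus \emph{down}); one must also handle the constraint \eqref{surplus_constraint} to ensure the argument of $v_{\hat\pi}$ stays nonnegative up to $\eta_0^\pi$.

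Putting these estimates together yields that
\[
M(t) := e^{-q(t\wedge\eta_0^\pi)} v_{\hat\pi}(U^\pi(t\wedge\eta_0^\pi)) + \int_{[0,\,t\wedge\eta_0^\pi]} e^{-qs}\big(\beta\,\mathrm{d}L^\pi_c(s) + \mathrm{d}L^\pi_p(s)\big)
\]
is a nonnegative local supermartingale (nonnegativity using $v_{\hat\pi}\ge 0$, which follows from $v_{\hat\pi}\ge v_{\pi_{b,b}}\ge 0$ or directly from \eqref{HJB-inequality_db2} and $v_{\hat\pi}(0)\ge 0$). Then I would localize along a reducing sequence of stopping times, take expectations to get $v_{\hat\pi}(x) \ge \mathbb{E}_x[M(\tau_n)] \ge \mathbb{E}_x\big[\int_{[0,\tau_n\wedge\eta_0^\pi]} e^{-qs}(\beta\,\mathrm{d}L^\pi_c + \mathrm{d}L^\pi_p)\big]$, and let $n\to\infty$; monotone convergence on the integral term and Fatou on the (nonnegative) boundary term deliver $v_{\hat\pi}(x) \ge v_\pi(x)$. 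The main obstacle I anticipate is the bookkeeping at the periodic decision times: one must show that the \emph{pointwise} maximization $\max_{0\le l\le x}\{l + v_{\hat\pi}(x-l) - v_{\hat\pi}(x)\}$ appearing in \eqref{HJB-inequality_db} (evaluated along the path and integrated against $r\,\mathrm{d}t$, i.e. against the compensator of $N^r$) genuinely dominates the realized jump $v_{\hat\pi}(U^\pi(T(i))) - v_{\hat\pi}(U^\pi(T(i)-)) + \nu^\pi(T(i))$ for \emph{every} admissible choice of $\nu^\pi(T(i)) \in [0, U^\pi(T(i)-) + \Delta X(T(i))]$, together with the integrability needed to pass from local supermartingale to genuine supermartingale — this is where the assumption that $X$ drifts to infinity and standard scale-function growth estimates are used.
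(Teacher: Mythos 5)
Your proposal is correct and follows essentially the same route as the paper: apply the change-of-variables/It\^o formula to ${\rm e}^{-qt}v_{\hat{\pi}}(U^{\pi}(t))$ along a localizing sequence (the paper uses $T_n=\inf\{t:U^{\pi}(t)>n \text{ or } U^{\pi}(t)<1/n\}$, which also sidesteps your proposed extension of $v_{\hat{\pi}}$ below zero), bound the classical-reflection terms via \eqref{HJB-inequality_db2}, absorb the periodic payments and the Poisson compensator into the maximized term of \eqref{HJB-inequality_db}, drop the nonnegative boundary term, and conclude by monotone convergence. The only cosmetic difference is that the paper states the resulting pathwise inequality directly rather than packaging it as a supermartingale property, which lets it avoid the local-to-genuine supermartingale integrability discussion you raise at the end.
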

\begin{proof}
By the definition of $v$ as a supremum, it follows that $v_{\hat{\pi}}(x)\leq v(x)$ for all $x\geq0$. We write $w:=v_{\hat{\pi}}$ and show that $w(x)\geq v_\pi(x)$ for all $\pi\in\mathcal{A}$ and $x\geq0$. 

Fix $x \geq 0$, $\pi\in \mathcal{A}$, and the corresponding surplus process $U^\pi$. 
Let $(T_n)_{n\in\mathbb{N}}$ be the sequence of stopping times defined by $T_n :=\inf\{t>0:U^\pi (t) > n \text{ or }  U^\pi(t)<1/n\}$. 
Since $U^\pi$ is a semi-martingale and $w$ is sufficiently smooth on $(0, \infty)$ by assumption, we can use the change of variables for the bounded variation case (Theorem II.31 of \cite{protter}) and It\^o's formula (Theorem II.32 of \cite{protter}) for the unbounded variation case
 to the stopped process $({\rm e}^{-q(t\wedge T_n)}w(U^\pi(t\wedge T_n)); t \geq 0)$. Because $X$ and $N^r$ do not jump simultaneously, we can write under $\mathbb{P}_x$ that  
	\begin{equation*}
		\begin{split}
			{\rm e}^{-q(t\wedge T_n)}&w(U^{\pi}(t\wedge T_n))  -w(x)
			\\
			= &  \int_{0}^{t\wedge T_n}{\rm e}^{-qs}   (\mathcal{L}-q)w(U^{\pi}(s-))   \mathrm{d}s
			-\int_{[0,t\wedge T_n]}{\rm e}^{-qs}\nu^{\pi}(s)\mathrm{d}N^{r}(s)\\
			&- \int_0^{t\wedge T_n}{\rm e}^{-qs}w'(U^{\pi}(s-)) \mathrm{d} L_c^{\pi,c}(s) + \sum_{0 \leq s\leq t\wedge T_n}{\rm e}^{-qs}\Delta w \Big(U^{\pi}(s-)-\Delta L_c^{\pi}(s) \Big) 1_{\{ \Delta L_c^\pi (s) \neq 0 \}}\\
			&+\int_0^{t\wedge T_n}{\rm e}^{-qs}r\left\{\nu^{\pi}(s)+w \Big( U^{\pi}(s-)-\nu^{\pi}(s) \Big)-w(U^{\pi}(s-))\right\} \mathrm{d}s  + M(t \wedge T_n),
		\end{split}
	\end{equation*}
where $L_c^{\pi,c}$ is the continuous part of $L_c^{\pi}$ and $M$ is a local zero-mean martingale as in (A.1) of \cite{YP2016}. 

	\par On the other hand,  using \eqref{HJB-inequality_db2},
	we obtain that
	\begin{multline*}
		-\int_0^{t\wedge T_n}{\rm e}^{-qs}w'(U^{\pi}(s-)) \mathrm{d} L_c^{\pi,c}(s)+\sum_{0 \leq s\leq t\wedge T_n}{\rm e}^{-qs}[\Delta w(U^{\pi}(s-)-\Delta L_c^{\pi}(s))] 1_{\{ \Delta L_c^\pi (s) \neq 0 \}}\\
		\leq -\beta \int_0^{t\wedge T_n} {\rm e}^{-qs} \mathrm{d} L_c^{\pi,c}(s)-\beta\sum_{0 \leq s\leq t\wedge T_n} {\rm e}^{-qs}\Delta L^{\pi}_c(s)=-\beta\int_{[0, t\wedge T_n]}{\rm e}^{-qs} \mathrm{d} L_c^{\pi}(s).
	\end{multline*}
	\par 
By this inequality,  \eqref{surplus_constraint}  and because ${\rm e}^{-q(t\wedge T_n)}w(U^{\pi}(t\wedge T_n))$ is nonnegative,
	\begin{equation*}
		\begin{split}
			w(x) &\geq 
			-\int_{0}^{t\wedge T_n}{\rm e}^{-qs}  \Big[ (\mathcal{L}-q)w(U^{\pi}(s-))+r\max_{0\leq l \leq U^{\pi}(s-)}\left\{ l+w(U^{\pi}(s-)-l)-w(U^{\pi}(s-))\right\} \Big]  \mathrm{d}s\\
			& +\int_{[0, t\wedge T_n]}{\rm e}^{-qs}\nu^{\pi}(s)\mathrm{d}N^{r}(s)+\beta\int_{[0, t\wedge T_n]}{\rm e}^{-qs} \mathrm{d} L^{\pi}_c(s) - M(t\wedge T_n).
		\end{split}
	\end{equation*}
	Using \eqref{HJB-inequality_db}, 
	we have
	\begin{equation} \label{w_lower_2}
	\begin{split}
	w(x) \geq &
	- M(t\wedge T_n)+ \int_{[0, t\wedge T_n]}{\rm e}^{-qs}\nu^{\pi}(s)\mathrm{d}N^{r}(s)+\beta\int_{[0, t\wedge T_n]}{\rm e}^{-qs} \mathrm{d} L_c^{\pi}(s). 
	\end{split}
	\end{equation} 
	\par Now  taking expectations  in \eqref{w_lower_2} 
and
	letting $t$ and $n$ go to infinity ($T_n\nearrow\infty$ $\mathbb{P}_x$-a.s.), 
	the monotone convergence theorem gives  
$w(x) \geq \lim_{t, n \rightarrow \infty}\mathbb{E}_x \left( \int_{[0,t\wedge T_n]}{\rm e}^{-qs}\nu^{\pi}(s)\mathrm{d}N^{r}(s)+\beta\int_{[0, t\wedge T_n]}{\rm e}^{-qs} \mathrm{d} L_c^{\pi}(s) \right) =v_{\pi}(x)$.
	This completes the proof.
\end{proof}

\section{Computation of the expected NPV of dividends under $\pi_{a,b}$} \label{section_computation_NPV}
%
In this section, we compute the expected NPV of the dividends under the hybrid barrier strategy $\pi_{a,b}$ as defined in Section \ref{section_model}:
\begin{align} \label{v_a_b_def}
	v_{a,b}(x) :=v_{\pi_{a,b}} (x) = \mathbb{E}_x \Big( \int_{[0,\eta_0^{(a,b)}]} {\rm e}^{-q t} [\beta \diff  L^{(a,b)}_c(t) + \diff L^{(a,b)}_p(t)] 
	\Big), \quad x \geq 0,
\end{align}
where $\eta_0^{(a,b)} := \inf \{ t > 0: U_r^{(a,b)} (t) < 0 \}$.
The pure periodic case ($b = \infty$) and the pure continuous case ($a = b$) are given in \cite{YP2016} and  \cite{BKY}, respectively.  Here, we focus on the case $0 \leq a < b$.

Toward this end, we first review the scale function.

	\subsection{Review on scale functions.}
	
	Fix $q > 0$. 
	We use $W^{(q)}$ for the scale function of the process $X$. This is the mapping from $\R$ to $[0, \infty)$ that takes value zero on the negative half-line, while on the positive half-line it is a strictly increasing function that is defined by its Laplace transform:
	\begin{align} \label{scale_function_laplace}
	\begin{split}
	\int_0^\infty  {\rm e}^{-\theta x} W^{(q)}(x) \diff x &= \frac 1 {\psi(\theta)-q}, \quad \theta > \Phi(q),
	\end{split}
	\end{align}
	where $\psi$ is as defined in \eqref{lk} and
	\begin{align}
	\begin{split}
	\Phi(q) := \sup \{ \lambda \geq 0: \psi(\lambda) = q\} . 
	\end{split}
	\label{def_varphi}
	\end{align}
	We also define, for $x \in \R$, 
	\begin{align*}
	\overline{W}^{(q)}(x) :=  \int_0^x W^{(q)}(y) \diff y, \quad
	Z^{(q)}(x) := 1 + q \overline{W}^{(q)}(x), \quad \textrm{and}  \quad
	\overline{Z}^{(q)}(x) := \int_0^x Z^{(q)} (z) \diff z. 
	\end{align*}
	
Define also
\begin{align*} 
Z^{(q)}(x, \theta ) &:={\rm e}^{\theta x} \left( 1 + (q- \psi(\theta )) \int_0^{x} {\rm e}^{-\theta  z} W^{(q)}(z) \diff z	\right), \quad x \in \R, \, \theta  \geq 0.
\end{align*}
In particular, $Z^{(q)}(x, 0) =Z^{(q)}(x)$, $x \in \R$, and, for $r > 0$,
\begin{align*} 
\begin{split}
Z^{(q)}(x, \Phi(q+r)) &:={\rm e}^{\Phi(q+r) x} \left( 1 -r \int_0^{x} {\rm e}^{-\Phi(q+r) z} W^{(q)}(z) \diff z \right).
\end{split}
\end{align*}

	
	If we define 
	 $\tau_a^- := \inf \left\{ t \geq 0: X(t) < a \right\}$ and $\tau_b^+ := \inf \left\{ t \geq 0: X(t) >  b \right\}$ for any $b > a$,
	then, for $x \geq a $, 
	\begin{align}
	\begin{split}
	\E_x \left( {\rm e}^{-q \tau_a^-} 1_{\left\{ \tau_b^+ > \tau_a^- \right\}}\right) &= \frac {W^{(q)}(b-x)}  {W^{(q)}(b-a)}, \\
	\E_x\left({\rm e}^{-q\tau_b^{+}+\theta(b-X(\tau_b^{+}))};\tau_b^+< \tau_a^{-} \right)&=Z^{(q)}(b-x, \theta)-W^{(q)}(b-x)\frac{Z^{(q)}(b-a,\theta)}{W^{(q)}(b-a)}, \quad \theta \geq 0;
	\end{split}
	\label{laplace_in_terms_of_z}
	\end{align}
	see (2.10) of \cite{YP2016b} for the spectrally negative case.

	\begin{remark} \label{remark_smoothness_zero}
		\begin{enumerate}
			\item If $X$ is of unbounded variation or the \lev measure is atomless, it is known that $W^{(q)}$ is $C^1(\R \backslash \{0\})$; see, e.g.,\ \cite[Theorem 3]{Chan2011}.
			\item Regarding the asymptotic behavior near zero, as in Lemmas 3.1 and 3.2 of \cite{KKR},
			\begin{align}\label{eq:Wqp0}
			\begin{split}
			W^{(q)} (0) &= \left\{ \begin{array}{ll} 0 & \textrm{if $X$ is of unbounded
				variation,} \\ \frac 1 {c} & \textrm{if $X$ is of bounded variation,}
			\end{array} \right. \\
			W^{(q)\prime} (0+) &=
			\left\{ \begin{array}{ll}  \frac 2 {\sigma^2} & \textrm{if }\sigma > 0, \\
			\infty & \textrm{if }\sigma = 0 \; \textrm{and} \; \Pi(0,\infty)= \infty, \\
			\frac {q + \Pi(0,\infty)} {c^2} &  \textrm{if }\sigma = 0 \; \textrm{and} \; \Pi(0,\infty) < \infty.
			\end{array} \right.
			\end{split}
			\end{align}
			On the other hand, as in Lemma 3.3 of \cite{KKR},
			\begin{align}
			\begin{split}
			{\rm e}^{-\Phi(q) x}W^{(q)} (x) \nearrow \psi'(\Phi(q))^{-1}, \quad \textrm{as } x \uparrow \infty.
			\end{split}
			\label{W^{(q)}_limit}
			\end{align}
			


		\end{enumerate}
	\end{remark}
	
In this paper, we use the following versions of the scale function: for $a < b$ and $x \in \R$,  
\begin{align} \label{W_a_def}
	\begin{split}
		W_{a-b}^{(q,r)}(a-x) &:=W^{(q+r)}(b-x)- r\int_{0}^{a-x}  W^{(q)}(a-x-y)W^{(q+r)}(y-a+b)\diff y, \\ 
		\overline{W}_{a-b}^{(q,r)}(a-x) &:=\overline{W}^{(q+r)}(b-x)- r\int_{0}^{a-x}  W^{(q)}(a-x-y)\overline{W}^{(q+r)}(y-a+b)\diff y, \\ 
		Z_{a-b}^{(q,r)}(a-x, \theta) 
		&:=Z^{(q+r)}(b-x, \theta)- r\int_{0}^{a-x}  W^{(q)}(a-x-y)Z^{(q+r)}(y-a+b, \theta)\diff y, \quad \theta \geq 0, \\ 
		\overline{Z}_{a-b}^{(q,r)}(a-x) &:=\overline{Z}^{(q+r)}(b-x)- r\int_{0}^{
			a-x}  W^{(q)}(a-x-y) \overline{Z}^{(q+r)}(y-a+b)\diff y.
	\end{split}
\end{align}

\begin{remark} \label{remark_Z_q_r_another_expression}
For $a < b$ and $x \in \R$, we can write 
\begin{align*} 
		W_{a-b}^{(q,r)}(a-x) 
		&=W^{(q)}(b-x)+r\int_0^{b-a}W^{(q)}(b-u-x)W^{(q+r)}(u) \diff u, \\
		Z_{a-b}^{(q,r)}(a-x, \theta) 
		&=Z^{(q)}(b-x, \theta)+r\int_0^{b-a}W^{(q)}(b-u-x)Z^{(q+r)}(u, \theta) \diff u, \quad \theta \geq 0,\\
		\overline{Z}_{a-b}^{(q,r)}(a-x) 
		 &=\overline{Z}^{(q)}(b-x)+r\int_0^{b-a}W^{(q)}(b-u-x)\overline{Z}^{(q+r)}(u) \diff u.
\end{align*}
These hold by (5) of \cite{LRZ} and (3.4) of \cite{YP2015}. 
\end{remark}
	


	
\subsection{The expression for $v_{a,b}$ in terms of the scale function} 
\par 

We will now present some preliminary results that will allow us to express the function $v_{a,b}$ given by \eqref{v_a_b_def} in terms of scale functions. 
The proofs of the following results are lengthy and hence are deferred to Appendix \ref{appendix_proof}.

Define 
\begin{align}
	\mathcal{K}^{(q,r)}_{a-b}(a-x):=\frac{1}{r+q}\left(q\frac{Z_{a-b}^{(q,r)}(a-x)}{Z^{(q+r)}(b-a)}+rZ^{(q)}(a-x)\right), \quad a < b, x \in \R. \label{def_K}
\end{align}

	\begin{proposition}\label{per-con}
	For $q>0$, $0 \leq a<b$, and $x \geq 0$,
	\begin{align}
	\E_x\left(\int_{[0, \eta_0^{(a,b)}]} {\rm e}^{-qt} \diff L_p^{(a,b)}(t)\right) =\frac{\mathcal{K}^{(q,r)}_{a-b}(a-x)}{\mathcal{K}^{(q,r)}_{a-b}(a)}k^{(q,r)}_{a-b}(a)-k^{(q,r)}_{a-b}(a-x), \label{L_p_identity}
	\end{align}
	where
	\begin{align*}
	k^{(q,r)}_{a-b}(a-x) := \frac r {q+r}\left(\overline{Z}^{(q)}(a-x)- \frac r q \overline{Z}^{(q+r)}(b-a)  { Z^{(q)}(a-x)}  -\overline{Z}^{(q,r)}_{a-b}(a-x)\right).
	\end{align*}
	\end{proposition}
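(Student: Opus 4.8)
The plan is to compute the expected NPV of the periodic dividends by decomposing the controlled process $U^{(a,b)}_r$ at the successive Parisian reflection epochs and then summing a geometric-type series. First I would observe that by the strong Markov property and the construction in Section \ref{section_construction_parisian_classical}, each "excursion" between consecutive Parisian down-pushes behaves like the classically reflected process $U^b$ of \eqref{classical_reflected} started afresh; the periodic dividends paid at the first such epoch $\widehat T_a^+(1)$ consist of the overshoot $U^b(\widehat T_a^+(1)) - a$ collected at the Poissonian time, and thereafter the process restarts from $a$. So the left-hand side of \eqref{L_p_identity} should satisfy a renewal equation: it equals (the expected discounted first periodic payment before ruin) plus (the discounted probability of surviving to $\widehat T_a^+(1)$ and landing at $a$) times (the same quantity evaluated at $x=a$). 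Solving this fixed-point relation for the function of $x$ yields the ratio structure $\mathcal K^{(q,r)}_{a-b}(a-x)/\mathcal K^{(q,r)}_{a-b}(a)$ appearing in the statement.

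The key computational inputs are fluctuation identities for the classically reflected process $U^b$ killed at the periodic clock and at ruin. Concretely I would need: (i) the joint transform $\E_x[ e^{-q \widehat T_a^+(1)} ; \widehat T_a^+(1) < \eta_0, \, U^b(\widehat T_a^+(1)) \in \diff y]$ for $y > a$, which by the exponential-clock trick reduces to a resolvent density of $U^b$ at rate $q+r$ — this is where the scale functions $W^{(q)}$, $W^{(q+r)}$ and the reflected resolvent from \cite{BKY} enter, and it is what produces the $Z^{(q+r)}(b-a)$ in the denominator of $\mathcal K$; (ii) the "no-reflection-yet" contribution $\E_x[\int_0^{\widehat T_a^+(1)\wedge \eta_0} e^{-qt} \cdots]$, which is identically zero for the periodic part but contributes through the initial segment; and (iii) the fact that while the clock rings below $a$ nothing is paid and the process continues, so the effective killing is at rate $r$ only while above $a$ — this is precisely the mechanism encoded in the $W_{a-b}^{(q,r)}$, $Z_{a-b}^{(q,r)}$ versions of the scale functions in \eqref{W_a_def}. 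Matching the overshoot integral $\int_a^\infty(y-a)\, \mathbb P_x(U^b(\widehat T_a^+(1)) \in \diff y, \ldots)$ against $\overline Z$-type primitives gives the function $k^{(q,r)}_{a-b}$.

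The main obstacle, and the reason the proof is deferred to the appendix, is carrying out step (i)–(iii) with the correct bookkeeping of the two barriers simultaneously: the process sees classical reflection at $b$ at all times and Parisian killing at rate $r$ only on $(a,\infty)$, so one cannot directly quote a single identity from \cite{APY}, \cite{YP2016}, or \cite{BKY}. I expect the cleanest route is to first establish the identity for the process $X^a_r$ with Parisian reflection only (dualizing \cite{APY}, or specializing the spectrally negative results of \cite{YP2016}) and then to "add" the classical reflection at $b$ by an excursion-theoretic or pathwise argument analogous to Section \ref{section_construction_parisian_classical} — effectively replacing $W^{(q)}(\cdot)$ and $Z^{(q)}(\cdot)$ by their $b$-reflected counterparts via the identities in Remark \ref{remark_Z_q_r_another_expression}. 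The algebraic step of collecting terms into the stated form of $\mathcal K^{(q,r)}_{a-b}$ and $k^{(q,r)}_{a-b}$, using $Z^{(q)}(x) = 1 + q\overline W^{(q)}(x)$ and the relations between $W_{a-b}^{(q,r)}$, $\overline W_{a-b}^{(q,r)}$, $Z_{a-b}^{(q,r)}$, $\overline Z_{a-b}^{(q,r)}$, is routine but long, hence its relegation to Appendix \ref{appendix_proof}.
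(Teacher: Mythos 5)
Your renewal equation is valid: decomposing at the first Parisian epoch $\widehat{T}_a^+(1)$ of the doubly-reflected process and using the strong Markov property does give $f_p(x,a,b)=A(x)+B(x)f_p(a,a,b)$ with $A(x)=\E_x\big[{\rm e}^{-q\widehat{T}_a^+(1)}(U^b(\widehat{T}_a^+(1))-a);\widehat{T}_a^+(1)<\eta_0\big]$ and $B(x)=\E_x\big[{\rm e}^{-q\widehat{T}_a^+(1)};\widehat{T}_a^+(1)<\eta_0\big]$, and setting $x=a$ closes the fixed point exactly as in the paper. The gap is that you never produce $A(x)$ and $B(x)$ for general $x$, and your step (i) mischaracterizes them: since $\widehat{T}_a^+(1)$ is the first Poisson arrival \emph{at which $U^b$ is above $a$}, these transforms are not plain $(q+r)$-resolvent quantities of $U^b$ but occupation-time-weighted ones (killing at rate $r$ only on $(a,\infty)$, classical reflection at $b$, absorption at $0$, started from arbitrary $x\in[0,b]$ so the process may oscillate across $a$ many times first). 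No such identity exists off the shelf in \cite{APY}, \cite{BKY}, or \cite{K}, and deriving it is essentially the whole content of the proposition; your closing suggestion of "first Parisian-only, then add reflection at $b$" points in the right direction but is not carried out.

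The paper avoids needing $A(x),B(x)$ directly by chaining $x\to b\to a$ instead of $x\to a$. First it decomposes at $\tau_b^+(r)$, the first passage above $b$ of the Parisian-only process $X_r^a$: on $[0,\tau_b^+(r))$ the doubly-reflected process coincides with $X_r^a$ (see \eqref{U_r_X_match}), so the discounted periodic dividends collected up to $\tau_b^+(r)\wedge\tau_0^-(r)$ and the transform $\E_x({\rm e}^{-q\tau_b^+(r)};\tau_b^+(r)<\tau_0^-(r))$ can be quoted verbatim from Theorems 3.1 and 3.2 of \cite{APY}; this yields \eqref{f_p_recursion}, expressing $f_p(x,a,b)$ through $f_p(b,a,b)$. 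Second, starting from $b$ the process stays above $a$ until $\eta_a^-$ (no negative jumps), so the first Parisian event from $b$ is exactly $\mathbf{e}_r$ on $\{\mathbf{e}_r<\eta_a^-\}$ and the rate-$r$ killing "only above $a$" degenerates to plain killing at rate $q+r$; hence $\E_b({\rm e}^{-q(\eta_a^-\wedge\mathbf{e}_r)})$ and the overshoot term $\E_b({\rm e}^{-q\mathbf{e}_r}(U^b(\mathbf{e}_r)-a);\mathbf{e}_r<\eta_a^-)$ follow from the classical exit and resolvent identities for $U^b$ in \cite{K} (see \eqref{per_div} and the display after it), giving $f_p(b,a,b)$ in terms of $f_p(a,a,b)$. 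It is precisely this two-stage split that lets each leg be computed from known identities; your one-step split concentrates all the difficulty into a single unproved transform.
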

	\begin{proposition}\label{sin-con}
	For $q>0$, $0 \leq a<b$, and $x \geq 0$,
\begin{align} \label{L_c_identity}
	\E_x\left(\int_{[0, \eta_0^{(a,b)}]}{\rm e}^{-qt}\diff  L_c^{(a,b)}(t)\right) =\frac{\mathcal{K}^{(q,r)}_{a-b}(a-x)}{\mathcal{K}^{(q,r)}_{a-b}(a)} i^{(q,r)}_{a-b}(a)- i^{(q,r)}_{a-b}(a-x),
	\end{align}
where
		\begin{multline*}
	i_{a-b}^{(q,r)}(a-x)
	:=\overline{Z}^{(q,r)}_{a-b}(a-x)-\psi'(0+)\overline{W}^{(q,r)}_{a-b}(a-x) \\+\frac r q  Z^{(q)}(a-x) \left(\overline{Z}^{(q+r)}(b-a)-\psi'(0+)\overline{W}^{(q+r)}(b-a)\right).
	\end{multline*}
\end{proposition}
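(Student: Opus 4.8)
\textbf{Proof proposal for Proposition \ref{sin-con}.}

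The plan is to mirror the decomposition used for Proposition \ref{per-con}, exploiting the fact that the controlled surplus $U^{(a,b)}_r$ restarted after each Parisian jump behaves exactly like the classically reflected process $U^b$ of \eqref{classical_reflected}. First I would handle the auxiliary one-sided problem without Parisian reflection: for the purely classically reflected process $U^b$ started at $x \leq b$, with $\theta_a^- := \inf\{t > 0 : U^b(t) < a\}$, compute $\E_x\big( \int_{[0,\theta_a^-]} {\rm e}^{-qt} \diff L_c^b(t)\big)$ and the Laplace transform $\E_x({\rm e}^{-q\theta_a^-})$ at the moment the process is killed upon entering $(-\infty,a)$. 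These are the classical fluctuation identities for reflected spectrally positive \lev processes (the dual of \cite{BKY}); they are expressible through $W^{(q)}$, $Z^{(q)}$, $\overline{Z}^{(q)}$ and $\psi'(0+)$, using \eqref{laplace_in_terms_of_z} and the known expression for the resolvent/occupation measure of the reflected process. The term $\overline{Z}^{(q)}(b-a) - \psi'(0+)\overline{W}^{(q)}(b-a)$ (and its $(q+r)$-analogue) will emerge here as the expected discounted classical dividends collected before down-crossing $a$.

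Next I would assemble the full identity by a renewal/strong-Markov argument over the successive Parisian reflection epochs $\widehat T_a^+(i)$. Between consecutive Parisian epochs, $U^{(a,b)}_r$ evolves as a shifted copy of $U^b$, contributing classical dividends; at each $\widehat T_a^+(i)$ the process is reset to $a$ (no classical dividends accrue there); and the whole excursion structure terminates either by ruin or is continued. Conditioning on the first event — whether the process first down-crosses $a$ (possibly leading to ruin in $[a-x \leq \cdots]$ considerations at the $0$-level, handled by the $0 \leq a$ restriction and the Parisian mechanism) or first reaches a periodic clock time while above $a$ — yields a linear equation for $\E_x(\int_{[0,\eta_0^{(a,b)}]}{\rm e}^{-qt}\diff L_c^{(a,b)}(t))$ in terms of the value at the reset point $a$. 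Solving this fixed-point relation produces the ratio $\mathcal{K}^{(q,r)}_{a-b}(a-x)/\mathcal{K}^{(q,r)}_{a-b}(a)$ as the appropriate ``harmonic'' prefactor, exactly as in \eqref{L_p_identity}, because $\mathcal{K}^{(q,r)}_{a-b}$ is precisely the function governing the probabilistic weight of reaching level $a$ before ruin under the combined Parisian-classical dynamics (this was already established in the proof of Proposition \ref{per-con}). The function $i^{(q,r)}_{a-b}(a-x)$ is then identified as the ``particular solution'' piece: the expected discounted classical dividends accumulated before the process is first sent to $a$, which is built from the classically-reflected identity of the first paragraph together with the $r$-resolvent correction term $\frac{r}{q}Z^{(q)}(a-x)(\overline{Z}^{(q+r)}(b-a) - \psi'(0+)\overline{W}^{(q+r)}(b-a))$ accounting for dividends paid between the barrier $a$ and $b$ during excursions above $a$.

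Concretely, I would verify that $i^{(q,r)}_{a-b}$ solves the same ordinary-integro-differential problem as $v_{a,b}$'s continuous-dividend component on $(a,b)$ with the classical reflection (Neumann-type) condition at $b$, i.e.\ that $(\mathcal{L} - (q+r))\,i^{(q,r)}_{a-b}(a-\cdot)$ matches the forcing coming from $-r\,\E_\cdot(\cdots)$ plus the classical dividend rate, and that on $[0,a]$ the process is simply killed/reset; the representations in Remark \ref{remark_Z_q_r_another_expression} and Remark \ref{remark_smoothness_zero} (especially \eqref{W^{(q)}_limit} and the behaviour at $0$) let one check the boundary/smoothness requirements. Combining: the answer must be of the form (prefactor)$\times$(value at $a$) $-$ (particular part), and matching at $x$ with $a-x=0$ forces the prefactor to be $\mathcal{K}^{(q,r)}_{a-b}(a-x)/\mathcal{K}^{(q,r)}_{a-b}(a)$ and the constant multiplying it to be $i^{(q,r)}_{a-b}(a)$, giving \eqref{L_c_identity}.

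The main obstacle I anticipate is the bookkeeping in the renewal equation: one must carefully track the contribution of classical dividends \emph{only} on the excursions of $U^b$-type pieces strictly above $a$ (and not double-count across the resets at $a$), while correctly incorporating the Poissonian killing at rate $r$ that governs when a Parisian reset occurs. This is where the $(q+r)$-scale functions and the specific combination defining $Z^{(q,r)}_{a-b}$, $\overline{W}^{(q,r)}_{a-b}$ enter, and where an off-by-one or a mis-placed $\frac{r}{q}$ versus $\frac{r}{q+r}$ factor would derail the computation; the cleanest route is probably to reduce everything, via the exponential clock, to first-passage identities of the form \eqref{laplace_in_terms_of_z} for the auxiliary process $U^b$ killed at the exponential time of rate $r$ or at $\theta_a^-$, and then sum the geometric series over the number of Parisian resets.
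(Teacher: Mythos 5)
Your high-level architecture --- a strong-Markov/renewal decomposition producing a linear fixed-point equation for the value at the reset level $a$, fed by the classical identities \eqref{per_div} and \eqref{sin_div_1} for the reflected process $U^b$ on $[a,b]$ --- matches the paper's, and the pieces you describe for the value started at $b$ are exactly \eqref{f_c_markov}. However, there is a genuine gap: you never identify, let alone compute, the one genuinely new fluctuation identity the argument needs. In the paper the first renewal point is $\tau_b^+(r)$, the first passage of the Parisian-reflected-only process $X_r^a$ above $b$; on $[0,\tau_b^+(r))$ no classical dividends are paid, and the first classical payment is the lump-sum overshoot $X_r^a(\tau_b^+(r))-b$. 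The decomposition \eqref{f_c_markov_1} therefore requires
\begin{align*}
\E_x\left({\rm e}^{-q\tau_b^{+}(r)}\,[X_r^a(\tau_b^{+}(r))-b];\,\tau_b^+(r)<\tau_0^-(r)\right),
\end{align*}
which is Corollary \ref{corollary_overshoot}. This is obtained by differentiating at $\theta=0+$ the $\theta$-parametrized exit identity $g(x,a,b,\theta)$ of Proposition \ref{prop_joint_laplace}, whose derivation (in particular pinning down $g(a,a,b,\theta)$ in the unbounded-variation case) requires an excursion-measure argument following \cite{APY}. That is where the $\overline{Z}^{(q,r)}_{a-b}-\psi'(0+)\overline{W}^{(q,r)}_{a-b}$ structure of $i^{(q,r)}_{a-b}$ actually comes from; without it the ``particular part'' cannot be identified.

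Your alternative decomposition at the Parisian epochs $\widehat{T}_a^+(i)$ does not avoid this difficulty: it requires the expected discounted classical dividends of the doubly reflected process up to the first Parisian catch, started from an arbitrary $x\geq 0$ (including $x<a$, where the path may cross $a$ many times before a Poissonian clock finds it above $a$). That quantity is not a standard identity, and deriving it essentially amounts to redoing the paper's two-stage argument. Similarly, your claim that $\mathcal{K}^{(q,r)}_{a-b}(a-x)/\mathcal{K}^{(q,r)}_{a-b}(a)$ ``was already established'' as the harmonic prefactor is not quite accurate: in the proof of Proposition \ref{per-con} it only emerges after combining $\mathcal{H}^{(q,r)}_{a-b}$ and $\mathcal{I}^{(q,r)}_{a-b}$ through the algebraic relation \eqref{H_I_K_relation}, and you would still need to reproduce that step. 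Finally, the verification-style alternative (checking that $i^{(q,r)}_{a-b}$ solves an integro-differential equation) would additionally require a uniqueness argument that you do not supply.
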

Let, for $0 \leq a<b$ and $x \geq 0$,
\begin{align*}
\Gamma(a,b;x) &:= \frac r {q+r} \overline{Z}^{(q)}(a-x)+ \frac {\beta \psi'(0+)} {q}  + \Big(\beta - \frac r {q+r}  \Big) \Big[ \overline{Z}^{(q,r)}_{a-b}(a-x) + \frac r q  Z^{(q)}(a-x)    \overline{Z}^{(q+r)}(b-a) \Big], 
\end{align*}
where, in particular,
\begin{align}\label{gamma_x0}
\begin{split}
\Gamma(a,b)&:= \Gamma(a,b;0) 
= \frac r {q+r} \overline{Z}^{(q)}(a)+ \frac {\beta \psi'(0+)} {q}   + \Big(\beta - \frac r {q+r}  \Big)  \Big[ \overline{Z}^{(q,r)}_{a-b}(a) + \frac r q  Z^{(q)}(a)    \overline{Z}^{(q+r)}(b-a) \Big]. 
\end{split}
\end{align}

Combining Propositions \ref{per-con} and \ref{sin-con} and after simplification, we have the following result. The proof is given in Appendix \ref{appendix_proof}.
\begin{theorem}\label{theorem_vf} 
	For $0 \leq a < b$ and  $x \geq 0$,
\begin{align}\label{vf_ff}
v_{a,b}(x)
&=\frac{\mathcal{K}^{(q,r)}_{a-b}(a-x)}{\mathcal{K}^{(q,r)}_{a-b}(a)}\Gamma(a,b)
-\Gamma(a,b;x),
\end{align}
where, in particular, for $x\geq b$, we have $v_{a,b}(x)=\beta(x-b)+v_{a,b}(b)$.
\end{theorem}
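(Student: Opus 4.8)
\textbf{Proof proposal for Theorem \ref{theorem_vf}.}

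The plan is to combine the two ingredients already in hand. From the decomposition \eqref{v_a_b_def},
\[
v_{a,b}(x) = \beta\,\E_x\Big(\int_{[0,\eta_0^{(a,b)}]}{\rm e}^{-qt}\diff L_c^{(a,b)}(t)\Big) + \E_x\Big(\int_{[0,\eta_0^{(a,b)}]}{\rm e}^{-qt}\diff L_p^{(a,b)}(t)\Big),
\]
so by Propositions \ref{per-con} and \ref{sin-con} we immediately get
\[
v_{a,b}(x) = \frac{\mathcal{K}^{(q,r)}_{a-b}(a-x)}{\mathcal{K}^{(q,r)}_{a-b}(a)}\Big(\beta\, i^{(q,r)}_{a-b}(a) + k^{(q,r)}_{a-b}(a)\Big) - \Big(\beta\, i^{(q,r)}_{a-b}(a-x) + k^{(q,r)}_{a-b}(a-x)\Big).
\]
Thus everything reduces to the algebraic identity $\beta\, i^{(q,r)}_{a-b}(a-x) + k^{(q,r)}_{a-b}(a-x) = \Gamma(a,b;x)$ for all $x\ge 0$ (and then the $x=0$ case gives the prefactor $\Gamma(a,b)$ automatically). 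First I would substitute the definitions of $k^{(q,r)}_{a-b}$ and $i^{(q,r)}_{a-b}$, group the terms according to which generalized scale object they carry — namely $\overline{Z}^{(q)}(a-x)$, $\overline{Z}^{(q,r)}_{a-b}(a-x)$, $\overline{W}^{(q,r)}_{a-b}(a-x)$, $Z^{(q)}(a-x)\overline{Z}^{(q+r)}(b-a)$, $Z^{(q)}(a-x)\overline{W}^{(q+r)}(b-a)$, and the constant — and check the coefficients match those in $\Gamma(a,b;x)$. The $\overline{W}$-terms should combine into the lone constant $\beta\psi'(0+)/q$; here I expect to need the identity relating $\overline{W}^{(q,r)}_{a-b}$ and $\overline{Z}^{(q,r)}_{a-b}$ to $\overline{W}^{(q+r)}$, $\overline{Z}^{(q+r)}$ from \eqref{W_a_def} together with the fact that $\psi'(0+)$ enters via the ruin/overshoot quantities, so that $-\psi'(0+)\overline{W}^{(q,r)}_{a-b}(a-x) - \frac rq\psi'(0+)Z^{(q)}(a-x)\overline{W}^{(q+r)}(b-a)$ telescopes (after an integration by parts in the convolution defining $\overline{W}^{(q,r)}_{a-b}$) into a constant $+\beta\psi'(0+)/q$; the coefficient bookkeeping on $\overline{Z}^{(q)}$, $\overline{Z}^{(q,r)}_{a-b}$ and $Z^{(q)}\overline{Z}^{(q+r)}$ is then a matter of collecting the $\frac r{q+r}$ and $\frac rq$ prefactors.

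For the final assertion that $v_{a,b}(x) = \beta(x-b) + v_{a,b}(b)$ for $x\ge b$, the cleanest route is the probabilistic one: when the process starts above $b$, the classical (continuous) barrier strategy instantaneously pays out the excess $x-b$ at unit rate $\beta$, bringing the surplus to $b$, and no ruin can occur in this instant since $x-b\le x$ respects \eqref{surplus_constraint}; thereafter the controlled process and all future dividends are exactly those of the process started at $b$. Hence $v_{a,b}(x) = \beta(x-b) + v_{a,b}(b)$. Alternatively, one may verify this directly from \eqref{vf_ff} by noting that for $x\ge b$ all the generalized scale functions appearing in $\mathcal K^{(q,r)}_{a-b}(a-x)$ and $\Gamma(a,b;x)$ reduce to their values involving $W^{(q)}$, $Z^{(q)}$, $\overline Z^{(q)}$ evaluated at nonpositive arguments (recalling $W^{(q)}\equiv 0$ on $(-\infty,0)$ and $Z^{(q)}\equiv 1$, $\overline Z^{(q)}(y)=y$ there), so $\mathcal K^{(q,r)}_{a-b}(a-x)$ becomes affine in $x$ with slope matching $\beta$ relative to its value at $b$; I would mention this as a consistency check but rely on the probabilistic argument for the proof.

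The main obstacle is the combinatorial-algebraic simplification in the first paragraph: keeping track of the numerous $q$, $r$, $q+r$ weightings and correctly executing the integration by parts / convolution identity that collapses the $\psi'(0+)\overline{W}$ contributions into the single constant term. Everything else — the linear combination of the two propositions and the affine behavior above $b$ — is routine once that identity is in place.
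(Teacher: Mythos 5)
Your overall strategy is the same as the paper's: add $\beta$ times Proposition \ref{sin-con} to Proposition \ref{per-con} and then simplify algebraically. However, the pivotal identity you stake the proof on, namely $k^{(q,r)}_{a-b}(a-x)+\beta\, i^{(q,r)}_{a-b}(a-x)=\Gamma(a,b;x)$, is false. A check at $x=a$ already shows this: there $k^{(q,r)}_{a-b}(0)+\beta\, i^{(q,r)}_{a-b}(0)-\Gamma(a,b;a)=-\frac{\beta\psi'(0+)}{q}\bigl[(q+r)\overline{W}^{(q+r)}(b-a)+1\bigr]=-\frac{\beta\psi'(0+)}{q}Z^{(q+r)}(b-a)\neq0$ under \eqref{assumption_drift}. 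The correct identity, which the paper proves, is
\begin{align*}
k^{(q,r)}_{a-b}(a-x)+\beta\, i^{(q,r)}_{a-b}(a-x)= \Gamma(a,b;x)-\frac{\beta\psi'(0+)}{q}\,Z^{(q+r)}(b-a)\,\mathcal{K}_{a-b}^{(q,r)} (a-x),
\end{align*}
and the residual term is \emph{not} constant in $x$: the $\overline{W}$-contributions do not collapse to the lone constant $\beta\psi'(0+)/q$ as you predict, but assemble (via $Z_{a-b}^{(q,r)}(a-x)=(q+r)\overline{W}_{a-b}^{(q,r)}(a-x)+1-r\overline{W}^{(q)}(a-x)$) into $-\frac{\beta\psi'(0+)}{q}\bigl(q\overline{W}_{a-b}^{(q,r)}(a-x)+rZ^{(q)}(a-x)\overline{W}^{(q+r)}(b-a)+1\bigr)$, which equals the $\mathcal{K}$-proportional term above. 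The theorem survives only because this residual is proportional to $\mathcal{K}^{(q,r)}_{a-b}(a-x)$ and therefore cancels exactly when substituted into the two-term combination $\frac{\mathcal{K}(a-x)}{\mathcal{K}(a)}\bigl(k(a)+\beta i(a)\bigr)-\bigl(k(a-x)+\beta i(a-x)\bigr)$. As written, your coefficient bookkeeping would not close, and your claim that ``the $x=0$ case gives the prefactor $\Gamma(a,b)$ automatically'' is likewise off by the same term; you need to identify the residual as a multiple of $\mathcal{K}^{(q,r)}_{a-b}(a-x)$ and observe the cancellation.

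Your probabilistic argument for the affine behavior $v_{a,b}(x)=\beta(x-b)+v_{a,b}(b)$ on $x\geq b$ is fine (the paper simply states this), and your consistency check via $W^{(q)}\equiv 0$ and $Z^{(q)}\equiv 1$ on the negative half-line is correct.
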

\section{The selection of $a^*$ and $b^*$} \label{section_smooth_fit}

For the optimality of the hybrid barrier strategy $\pi_{a,b}$ constructed in Section \ref{section_model}, we shall assume the following relation on the parameters $\beta$, $r$, and $q$. 
\begin{assump} \label{assumption_beta_r_q} 
	We assume that $r / (q+r) <  \beta <  1$.
\end{assump}
This is assumed throughout this section and also in Section \ref{section_optimality}. The cases $\beta \geq 1$ and $\beta \leq r / (q+r)$, where the pure continuous and periodic barrier strategies, respectively, are shown to be  optimal, are deferred to Section \ref{section_other_cases}.

\subsection{Smoothness conditions.}
In order to obtain the optimal thresholds, which we call  $a^*$ and $b^*$, we will ask that the value function $v_{a^*,b^*}$ is smooth enough. 
For any choice of $0 \leq a < b$, it is clear that $v_{a,b}$ is continuous for $x \geq 0$. Its derivatives, whenever they exist, are given by \begin{align*}
v_{a,b}^{(n)}(x)
&=\frac{\displaystyle \frac {\partial^n} {\partial x^n} \mathcal{K}^{(q,r)}_{a-b}(a-x)}{\mathcal{K}^{(q,r)}_{a-b}(a)} \Gamma(a,b)
- \frac {\partial^n} {\partial x^n}\Gamma(a,b;x), \quad n \geq 1.
\end{align*}

Fix $0 \leq a < b$.
We shall first compute, in the following lemma, the derivatives of $\mathcal{K}^{(q,r)}_{a-b}$ and $\Gamma$.  
Its proof is given in Appendix \ref{proof_lemma_derivatives}. 

\begin{lemma} \label{lemma_derivatives_Gamma_K}
Fix $0 \leq a < b$.
(i) For $x \neq a$, 
\begin{align} \label{H_derivatives}
\begin{split}
	\frac \partial {\partial x}\mathcal{K}^{(q,r)}_{a-b}(a-x)
	&=\frac{q}{Z^{(q+r)}(b-a)} \Big[ -W^{(q+r)}(b-x) \\ &+ r \Big(  W^{(q+r)}(b-a) \overline{W}^{(q)} (a-x) + \int_{0}^{a-x}  \overline{W}^{(q)}(a-x-y) W^{(q+r) \prime}(y-a+b)\diff y \Big)
	\Big], \\
	\frac {\partial^2} {\partial x^2}\mathcal{K}^{(q,r)}_{a-b}(a-x)
	&=\frac{q}{Z^{(q+r)}(b-a)} \Big[ W^{(q+r) \prime}(b-x) \\ &- r \Big(  W^{(q+r)}(b-a) W^{(q)} (a-x) +  \int_{0}^{a-x}  W^{(q)}(a-x-y) W^{(q+r) \prime}(y-a+b)\diff y \Big)
	\Big], \\
	\frac {\partial^3} {\partial x^3}\mathcal{K}^{(q,r)}_{a-b}(a-x)
	&=\frac{q}{Z^{(q+r)}(b-a)} \Big[ - W^{(q+r) \prime \prime}(b-x) + r \Big(  W^{(q+r)}(b-a) W^{(q) \prime} (a-x) \\ &+W^{(q)}(0) W^{(q+r) \prime}(b-x) + \int_{0}^{a-x}  W^{(q) \prime}(a-x-y) W^{(q+r) \prime}(y-a+b)\diff y \Big)
	\Big],
	\end{split}
\end{align}
where we assume that $W^{(q+r) \prime}(b-x)$ and $W^{(q+r) \prime \prime}(b-x)$ exist for the second and third equalities, respectively.



(ii) For $x \neq a$, 
\begin{align} \label{Gamma_derivatives}
\begin{split}
\frac \partial {\partial x}\Gamma(a,b;x) 
 &= -\frac r {q+r} Z^{(q)}(a-x)   - \Big(\beta - \frac r {q+r}  \Big) \Big[ Z^{(q+r)}(b-x) - r \Big(   Z^{(q+r)}(b-a) \overline{W}^{(q)} (a-x) \\ &+ (q+r) \int_{0}^{a-x}  \overline{W}^{(q)}(a-x-y) W^{(q+r)}(y-a+b)\diff y \Big) \Big], \\
\frac {\partial ^2} {\partial x^2}\Gamma(a,b;x) 
 &= \frac {qr} {q+r} W^{(q)}(a-x)   - \Big(\beta - \frac r {q+r}  \Big) \Big[ - (q+r)W^{(q+r)}(b-x) + r \Big(   Z^{(q+r)}(b-a) W^{(q)} (a-x) \\ &+ (q+r) \int_{0}^{a-x}  W^{(q)}(a-x-y) W^{(q+r)}(y-a+b)\diff y \Big)  \Big], \\
\frac {\partial ^3} {\partial x^3}\Gamma(a,b;x) 
 &= -\frac {qr} {q+r} W^{(q)\prime}(a-x)   - \Big(\beta - \frac r {q+r}  \Big) \Big[  (q+r)W^{(q+r)\prime}(b-x) - r \Big(  Z^{(q+r)}(b-a) W^{(q) \prime} (a-x) \\ &+ (q+r) \Big[   W^{(q)}(0) W^{(q+r)}(b-x) + \int_{0}^{a-x}  W^{(q) \prime}(a-x-y) W^{(q+r)}(y-a+b)\diff y \Big] \Big)  \Big],
 \end{split}
\end{align}
where we assume in the third equality that  $W^{(q)\prime}(a-x)$ and $W^{(q)\prime}(b-x)$ exist.
\end{lemma}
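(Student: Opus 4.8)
The statement is a differentiation exercise, so the plan is to differentiate the scale-function representations \eqref{def_K}, \eqref{W_a_def}, and the definition of $\Gamma(a,b;x)$ term by term, using three elementary facts that are immediate from the definitions in Section~\ref{section_computation_NPV}: $\frac{\diff}{\diff x}\overline{Z}^{(q)}(x)=Z^{(q)}(x)$, $\frac{\diff}{\diff x}Z^{(q)}(x)=qW^{(q)}(x)$, $\frac{\diff}{\diff x}\overline{W}^{(q)}(x)=W^{(q)}(x)$ on $(0,\infty)$ (and the same with $q$ replaced by $q+r$), together with the value $W^{(q)}(0)$ from \eqref{eq:Wqp0} and the $C^1(0,\infty)$-regularity of $W^{(q)}$, $W^{(q+r)}$ from Remark~\ref{remark_smoothness_zero}, which, with the pointwise smoothness hypotheses in the statement, legitimises every differentiation below. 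The one place needing care is the Leibniz rule for the convolution integrals $\int_0^{a-x}W^{(q)}(a-x-y)g(y-a+b)\,\diff y$: I would first substitute $u=a-x-y$ to get $\int_0^{a-x}W^{(q)}(u)g(b-x-u)\,\diff u$, so that all $x$-dependence sits in the upper limit and in $b-x-u$; differentiating then yields only the boundary term $-W^{(q)}(a-x)g(b-a)$ together with $\int_0^{a-x}W^{(q)}(u)\,\partial_x g(b-x-u)\,\diff u$, after which one substitutes back.

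For the first derivatives I would differentiate \eqref{def_K} and $\Gamma(a,b;x)$ this way and collect terms. The key observation is a cancellation of the ``bare'' $W^{(q)}(a-x)$ contributions: in $\frac{\partial}{\partial x}\mathcal{K}^{(q,r)}_{a-b}(a-x)$ the term $-\frac{rq}{q+r}W^{(q)}(a-x)$ coming from $\frac{r}{q+r}Z^{(q)}(a-x)$ is cancelled exactly by the boundary contribution $W^{(q)}(a-x)Z^{(q+r)}(b-a)$ produced in $\frac{\partial}{\partial x}Z^{(q,r)}_{a-b}(a-x)$; likewise, in $\frac{\partial}{\partial x}\Gamma$ the two $\overline{Z}^{(q+r)}(b-a)$-terms from differentiating $\frac{r}{q}Z^{(q)}(a-x)\overline{Z}^{(q+r)}(b-a)$ and $\overline{Z}^{(q,r)}_{a-b}(a-x)$ cancel. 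What remains is $\frac{q}{Z^{(q+r)}(b-a)}\big(-W^{(q+r)}(b-x)+r\int_0^{a-x}W^{(q)}(a-x-y)W^{(q+r)}(y-a+b)\,\diff y\big)$ for $\mathcal{K}$, and its analogue for $\Gamma$ with $W^{(q+r)}$ replaced by $Z^{(q+r)}$; one integration by parts in $u$ (with $\diff v=W^{(q)}(u)\,\diff u$, $v=\overline{W}^{(q)}(u)$, using $\overline{W}^{(q)}(0)=0$) converts the convolution into $W^{(q+r)}(b-a)\overline{W}^{(q)}(a-x)+\int_0^{a-x}\overline{W}^{(q)}(a-x-y)W^{(q+r)\prime}(y-a+b)\,\diff y$ (resp.\ the version with $Z^{(q+r)}(b-a)$ and $(q+r)W^{(q+r)}$), which is exactly the form stated in \eqref{H_derivatives} and \eqref{Gamma_derivatives}.

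For the second and third derivatives I would differentiate the expressions obtained at the previous order, but kept in their \emph{pre}-integration-by-parts form (convolution against $W^{(q)}$): differentiating that form directly produces the stated second-derivative expressions (convolution against $W^{(q+r)\prime}$, resp.\ $W^{(q+r)}$) with no further work, which is where the hypothesis that $W^{(q+r)\prime}(b-x)$ exists enters. Differentiating once more and then performing a single integration by parts — this time with first factor $W^{(q)}(u)$, so the boundary evaluation at $u=0$ supplies the term $W^{(q)}(0)W^{(q+r)\prime}(b-x)$ (resp.\ $W^{(q)}(0)W^{(q+r)}(b-x)$) and the convolution becomes one against $W^{(q)\prime}$ — yields the third-derivative formulas, now using the existence of $W^{(q)\prime}(a-x)$, $W^{(q+r)\prime}(b-x)$ and $W^{(q+r)\prime\prime}(b-x)$. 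The standing restriction $x\neq a$ is precisely what keeps $W^{(q)}$ and its derivatives evaluated away from the origin, where the scale function may have a kink.

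The computation is routine; the only real difficulty is bookkeeping. Controlling the signs in the Leibniz rule is what the substitution $u=a-x-y$ is for, and the main chore is choosing, at each order, which of the several equivalent representations (those of \eqref{W_a_def} versus Remark~\ref{remark_Z_q_r_another_expression}, and the pre- versus post-integration-by-parts forms) to differentiate so that the output lands exactly on the asserted expressions; I expect this matching, rather than any conceptual point, to be the thing to watch.
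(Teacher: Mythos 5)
Your proposal is correct and follows essentially the same route as the paper's Appendix B: both arguments reduce to differentiating the definitions of $\mathcal{K}^{(q,r)}_{a-b}$ and $\Gamma(a,b;\cdot)$ via the Leibniz rule on the convolution $\int_0^{a-x}W^{(q)}(a-x-y)g(y-a+b)\,\diff y$ and then integrating by parts to shift derivatives between the two factors, the only (immaterial) difference being that the paper performs the integration by parts before differentiating whereas you do it afterwards. Your explicit identification of the cancellations of the bare $W^{(q)}(a-x)$ and $\overline{Z}^{(q+r)}(b-a)$ terms, and of where the boundary term $W^{(q)}(0)W^{(q+r)\prime}(b-x)$ arises, is accurate and fills in exactly the steps the paper leaves as "immediate."
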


In view of the first and second equalities in \eqref{H_derivatives} and \eqref{Gamma_derivatives} and Remark \ref{remark_smoothness_zero}, $v_{a,b}$ is $C^1 ((0,b) \cup (b, \infty))$ (resp.\ $C^2 ((0,b) \cup (b, \infty))$) when $X$ has paths of bounded (resp.\ unbounded) variation.   
We now analyze the smoothness at the barriers $a$ and $b$.

\subsubsection{Smoothness at $b$}

By the first equalities in \eqref{H_derivatives} and \eqref{Gamma_derivatives}, 
\begin{align*}
v_{a,b}'(b-)=-q\frac{\Gamma(a,b)}{\mathcal{K}^{(q,r)}_{a-b}(a)}\frac{W^{(q+r)}(0)}{Z^{(q+r)}(b-a)}+\beta=-q\frac{\Gamma(a,b)}{\mathcal{K}^{(q,r)}_{a-b}(a)}\frac{W^{(q+r)}(0)}{Z^{(q+r)}(b-a)}+v_{a,b}'(b+).
\end{align*}
Hence, in view of Remark \ref{remark_smoothness_zero} (2), for the case of bounded variation, $v_{a,b}$ is continuously differentiable if 
\begin{equation}\label{cond_1}
\mathbf{C_b}: \Gamma(a,b)=0,
\end{equation}
while it holds automatically for the case of unbounded variation.

For the case of unbounded variation, again by Remark \ref{remark_smoothness_zero} (2) and  the second equalities in \eqref{H_derivatives} and \eqref{Gamma_derivatives}, 
\begin{align*}
v_{a,b}''(b-)=q\frac{\Gamma(a,b)}{\mathcal{K}^{(q,r)}_{a-b}(a)}\frac{W^{(q+r)\prime}(0+)}{Z^{(q+r)}(b-a)}=q\frac{\Gamma(a,b)}{\mathcal{K}^{(q,r)}_{a-b}(a)}\frac{W^{(q+r)\prime}(0+)}{Z^{(q+r)}(b-a)}+v_{a,b}''(b+).
\end{align*}
Hence, $v_{a,b}$ is twice continuously differentiable at $b$ if \eqref{cond_1} holds.


\subsubsection{Smoothness at $a$}


For the case of bounded variation, suppose $W^{(q+r) \prime}(b-a)$ exists.  By the second equalities in \eqref{H_derivatives} and \eqref{Gamma_derivatives}, 
\begin{align}\label{boundary_a_1}
	v_{a,b}''(a-)=v_{a,b}''(a+)&-  r W^{(q)}(0)\Big[ q\frac{W^{(q+r)}(b-a)}{Z^{(q+r)}(b-a)}\frac{\Gamma(a,b)}{\mathcal{K}^{(q,r)}_{a-b}(a)} +\Big(\frac{q}{r+q}  -  \Big(\beta - \frac r {q+r}  \Big)Z^{(q+r)}(b-a)\Big) \Big].
\end{align}
On the other hand, by straightforward differentiation (using Remark \ref{remark_Z_q_r_another_expression}), 
\begin{align}
\gamma(a,b) :=\frac \partial {\partial a}\Gamma(a,b)
= \frac r {q+r} Z^{(q)}(a) \kappa(b-a), \quad  0 < a < b, \label{gamma_small}
\end{align} 
where
\begin{align*}
\kappa(y) :=   1 - \frac 1 q    \Big(\beta (q+r) -r \Big)  Z^{(q+r)}(y), \quad y > 0.
\end{align*}

Hence $v_{a,b}$ is twice continuously differentiable at $a$ if \eqref{cond_1} holds and
\begin{align}\label{cond_2}
\mathbf{C_a:} \gamma(a,b)= 0 \Longleftrightarrow
	& \frac q {\beta (q+r)-r} =   Z^{(q+r)}(b-a)
\end{align}
is satisfied.

For the case of unbounded variation, suppose $W^{(q+r) \prime \prime}(b-a)$ exists.  By the third equalities in \eqref{H_derivatives} and \eqref{Gamma_derivatives} and Remark \ref{remark_smoothness_zero} (2), we obtain 
\begin{align}\label{boundary_a_2}
	v_{a,b}'''(a-)=v_{a,b}'''(a+)&+ r W^{(q)\prime}(0+) \Big[ q\frac{W^{(q+r)}(b-a)}{Z^{(q+r)}(b-a)}\frac{\Gamma(a,b)}{\mathcal{K}^{(q,r)}_{a-b}(a)} + \Big(\frac{q}{r+q}  - \Big(\beta - \frac r {q+r}  \Big) Z^{(q+r)}(b-a)\Big) \Big].
\end{align}	
Hence $v_{a,b}$ is thrice continuously differentiable at $a$ if \eqref{cond_1} and \eqref{cond_2} hold simultaneously.

In summary, we have the following. 

\begin{lemma} \label{lemma_smooth_fit_cond} Suppose  $0 \leq a < b$ are chosen so that $\mathbf{C_b}$ is satisfied.  Then, the following holds:
\begin{enumerate}
\item $v_{a,b}$ is $C^1$ (resp.\ $C^2$) on $(0, \infty)$ for the case of bounded (resp.\ unbounded) variation.
\item Suppose in addition that  $\mathbf{C_a}$ is satisfied. Then,
\begin{enumerate}
\item for the case of bounded variation and if $W^{(q+r) \prime}(b-a)$ exists, then $v_{a,b}$ is twice continuously differentiable at $a$;
\item  for the case of unbounded variation and if $W^{(q+r) \prime \prime}(b-a)$ exists, then $v_{a,b}$ is thrice continuously differentiable at $a$.  
\end{enumerate}
\end{enumerate}
\end{lemma}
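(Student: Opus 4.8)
The plan is to read off the statement almost directly from the boundary computations already assembled in this subsection, organizing them by variation type. The key observation is that Lemma~\ref{lemma_derivatives_Gamma_K} together with Remark~\ref{remark_smoothness_zero} already gives all derivatives of $v_{a,b}$ on $(0,b)\cup(b,\infty)$, so the only issue is matching one-sided limits at the two interior break points $x=a$ and $x=b$.

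First I would establish part (1). On each of the open intervals $(0,a)$, $(a,b)$, and $(b,\infty)$, the formula $v_{a,b}^{(n)}(x) = \bigl(\partial_x^n \mathcal{K}^{(q,r)}_{a-b}(a-x)\bigr)\Gamma(a,b)/\mathcal{K}^{(q,r)}_{a-b}(a) - \partial_x^n\Gamma(a,b;x)$ together with the first (and, in the unbounded case, second) equalities of \eqref{H_derivatives} and \eqref{Gamma_derivatives} expresses these derivatives in terms of $W^{(q)}$, $W^{(q+r)}$ and their first derivatives, which exist and are continuous away from the origin; in the unbounded-variation case (or the atomless-L\'evy-measure case) one invokes Remark~\ref{remark_smoothness_zero}(1) for $C^1$ smoothness of $W^{(q)},W^{(q+r)}$ on $\R\setminus\{0\}$. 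This gives $v_{a,b}\in C^1$ (resp.\ $C^2$) on each piece. For the gluing at $b$: the displayed computations in \S\ref{section_smooth_fit}.2.1 show that $v_{a,b}'(b-)-v_{a,b}'(b+) = -q\Gamma(a,b)W^{(q+r)}(0)/(\mathcal{K}^{(q,r)}_{a-b}(a)Z^{(q+r)}(b-a))$, and similarly for the second derivative with $W^{(q+r)\prime}(0+)$ in place of $W^{(q+r)}(0)$. Under $\mathbf{C_b}$, i.e.\ $\Gamma(a,b)=0$, both jumps vanish, so $v_{a,b}$ is $C^1$ (resp.\ $C^2$) across $b$. For the gluing at $a$ one uses \eqref{boundary_a_1}: when $X$ has bounded variation, $W^{(q)}(0)=1/c\neq 0$, so a priori there is a second-derivative jump at $a$, but its bracket is exactly $q W^{(q+r)}(b-a)\Gamma(a,b)/(Z^{(q+r)}(b-a)\mathcal{K}^{(q,r)}_{a-b}(a)) + \bigl(q/(r+q) - (\beta - r/(q+r))Z^{(q+r)}(b-a)\bigr)$; under $\mathbf{C_b}$ the first term drops, but the second need not — however for $C^1$ it suffices that $v_{a,b}'$ be continuous at $a$, which follows because the first equalities of \eqref{H_derivatives}, \eqref{Gamma_derivatives} show $v_{a,b}'$ is built from $W^{(q)},W^{(q+r)},Z^{(q)},Z^{(q+r)}$ and an integral term, all continuous at $x=a$ (the bounded-variation $W^{(q)}(0)=1/c$ contributes only to the \emph{second} derivative). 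For unbounded variation, $W^{(q)}(0)=0$ kills the jump in \eqref{boundary_a_1} outright, and $C^2$ at $a$ follows from continuity of the second derivative, which from the second equalities of \eqref{H_derivatives}, \eqref{Gamma_derivatives} is built from $W^{(q)},W^{(q+r)},Z^{(q+r)}$ plus an integral, all continuous at $a$.

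Next I would do part (2), which is where $\mathbf{C_a}$ enters. For (2)(a) (bounded variation, $W^{(q+r)\prime}(b-a)$ assumed to exist): from \eqref{boundary_a_1}, under $\mathbf{C_b}$ the second-derivative jump at $a$ equals $-rW^{(q)}(0)\bigl(q/(r+q) - (\beta - r/(q+r))Z^{(q+r)}(b-a)\bigr)$, and $\mathbf{C_a}$ — in the form \eqref{cond_2}, $q/(\beta(q+r)-r) = Z^{(q+r)}(b-a)$, equivalently $q/(r+q) = (\beta - r/(q+r))Z^{(q+r)}(b-a)$ — makes this bracket vanish; hence $v_{a,b}''(a-)=v_{a,b}''(a+)$ and $v_{a,b}$ is twice continuously differentiable at $a$. (One should note that the existence of $W^{(q+r)\prime}(b-a)$ is needed precisely so that $v_{a,b}''(a+)$ — which involves $W^{(q+r)\prime}(b-x)$ evaluated at $x=a$ via the second equality of \eqref{H_derivatives} — is well defined.) For (2)(b) (unbounded variation, $W^{(q+r)\prime\prime}(b-a)$ assumed to exist): the third equalities of \eqref{H_derivatives}, \eqref{Gamma_derivatives} give the third derivative on each piece, and \eqref{boundary_a_2} shows the third-derivative jump at $a$ equals, under $\mathbf{C_b}$, $rW^{(q)\prime}(0+)\bigl(q/(r+q) - (\beta - r/(q+r))Z^{(q+r)}(b-a)\bigr)$, which again vanishes under $\mathbf{C_a}$ in the form \eqref{cond_2}; combined with part (1)'s $C^2$ statement away from $b$ and the $C^1$/$C^2$ gluing at $b$ under $\mathbf{C_b}$, this yields thrice continuous differentiability at $a$. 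I would also record that $W^{(q)\prime}(0+)$ is finite here only when $\sigma>0$; when $\sigma=0$ with infinite activity one has $W^{(q)\prime}(0+)=\infty$, but then $W^{(q)}(0)=0$ forces a separate look — in fact the cleaner route is to note that $v_{a,b}'''(a+)$ itself, via the third equality of \eqref{H_derivatives}, involves the term $W^{(q)}(0)W^{(q+r)\prime}(b-x)$ which is harmless since $W^{(q)}(0)=0$, and the potentially divergent $W^{(q)\prime}$ terms enter \eqref{boundary_a_2} only through the product $W^{(q)\prime}(0+)\cdot 0$ once $\mathbf{C_a}$ holds, so the statement stands as written for all unbounded-variation cases.

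The main obstacle I anticipate is bookkeeping rather than conceptual: one must be careful that each one-sided derivative at $a$ and $b$ is genuinely well-defined before asserting the jumps vanish — this is exactly why the lemma hypothesizes the existence of $W^{(q+r)\prime}(b-a)$ and $W^{(q+r)\prime\prime}(b-a)$, and why Remark~\ref{remark_smoothness_zero} is invoked for behavior at the origin (the arguments $b-x$ and $a-x$ hit $0$ as $x\uparrow b$ and $x\uparrow a$ respectively). A second delicate point is the unbounded-variation $\sigma=0$, infinite-activity subcase where $W^{(q)\prime}(0+)=\infty$: one must verify that after imposing $\mathbf{C_a}$ the offending coefficient in \eqref{boundary_a_2} is identically zero, so that the product $\infty\cdot 0$ does not actually occur in the identity — i.e.\ the correct reading is that \eqref{boundary_a_2} holds with the bracket equal to $0$, giving $v_{a,b}'''(a-)=v_{a,b}'''(a+)$ with both sides finite and equal. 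Once these finiteness checks are in place, the proof is a direct substitution of $\mathbf{C_b}$ and $\mathbf{C_a}$ into the four displayed boundary identities \eqref{cond_1}, \eqref{boundary_a_1}, \eqref{cond_2}, \eqref{boundary_a_2}.
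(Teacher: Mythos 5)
Your proposal is correct and follows essentially the same route as the paper, which obtains the lemma by reading off piecewise smoothness from Lemma \ref{lemma_derivatives_Gamma_K} and Remark \ref{remark_smoothness_zero} and then substituting $\mathbf{C_b}$ and $\mathbf{C_a}$ into the boundary identities at $b$ and into \eqref{boundary_a_1}--\eqref{boundary_a_2}. Your extra observation that in the $\sigma=0$ unbounded-variation subcase the coefficient of the divergent $W^{(q)\prime}(a-x)$ term vanishes identically under $\mathbf{C_a}$, so that no $\infty\cdot 0$ ambiguity actually arises in \eqref{boundary_a_2}, is a point the paper leaves implicit and is a welcome additional check.
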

\begin{remark}
In view of Lemma \ref{lemma_smooth_fit_cond}, the smoothness obtained in (1) is already sufficient to apply the verification lemma (Lemma \ref{verificationlemma_2}). We therefore do not require the existence of  $W^{(q+r) \prime}(b-a)$ (resp.\ $W^{(q+r) \prime \prime}(b-a)$) for the case of bounded (resp.\ unbounded) variation as in (2).
\end{remark}
\subsection{Existence of $a^*, b^*$} \label{subsection_existence}

\par In view of \eqref{gamma_small}, by Assumption \ref{assumption_beta_r_q},
\begin{align*}
\frac \partial {\partial a}\kappa(b-a)
= \frac {q+r} q \Big( \beta (q+r) -r \Big)  W^{(q+r)}(b-a) > 0, \quad b > a > 0,
\end{align*}
and
\begin{align}\label{min_a}
\gamma(b-,b) 
= \frac r {q+r} Z^{(q)}(b)  \Big( 1 - \frac 1 q    \Big( \beta (q+r) - r \Big)  \Big) = \frac r {q} (1 - \beta) Z^{(q)}(b) >0, \quad b > 0.
\end{align}

Hence, in view of \eqref{gamma_small}, for $b > 0$, there exists a minimizer $a(b) \in [0, b)$ of the mapping $a \mapsto \Gamma(a, b)$ such that it is strictly decreasing on $[0,a(b))$ and strictly increasing on $(a(b), b)$.  In particular,
\begin{align}
a(b) = 0 \Longleftrightarrow
\gamma(0+,b)
 \geq 0
\Longleftrightarrow \kappa(b) \geq 0 \Longleftrightarrow b \leq \epsilon, \quad b > 0, \label{a_b_zero_equivalence}
\end{align}
where  $\epsilon > 0$ is such that 
\begin{align}\label{def-epsilon}
1 - \frac 1 q    \Big( \beta (q+r) - r\Big)  Z^{(q+r)}(\epsilon) = 0;
\end{align} 
this exists and is unique because, by Assumption \ref{assumption_beta_r_q}, the left hand side is strictly decreasing in $\epsilon$ to minus infinity and
\begin{align*}
1 - \frac 1 q    \Big( \beta (q+r) -r \Big)  Z^{(q+r)}(0) = \frac {q+r} q (1-\beta) > 0.
\end{align*} 
With this $\epsilon$, we have $a(b) = (b-\epsilon) \vee 0$ and
\begin{align}
\underline{\Gamma}(b) := \min_{0 \leq a \leq b} \Gamma(a,b)  = \Gamma((b-\epsilon) \vee 0,b). \label{Gamma_min_expression}
\end{align}
In summary, we have the following:
\begin{lemma} \label{lemma_Gama_wrt_a}Fix $b > 0$.
\begin{enumerate}
\item If $b \leq \epsilon$, the mapping $a \mapsto \Gamma(a, b)$ is monotonically (strictly) increasing and $\underline{\Gamma}(b) = \Gamma(0,b)$.
\item Otherwise, $a \mapsto \Gamma(a, b)$ decreases (strictly)  on  $[0,b-\epsilon)$ and increases (strictly)  on $(b-\epsilon, b)$. It attains a local minimum at $a(b) = b - \epsilon$ and hence $\gamma(b-\epsilon,b)
= 0$.
\end{enumerate}
\end{lemma}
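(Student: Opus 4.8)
The plan is to reduce Lemma \ref{lemma_Gama_wrt_a} entirely to the formula \eqref{gamma_small} for $\gamma(a,b) = \frac{\partial}{\partial a}\Gamma(a,b)$, which has already been established. Writing $\gamma(a,b) = \frac{r}{q+r} Z^{(q)}(a) \kappa(b-a)$, the key observation is that $Z^{(q)}(a) \geq 1 > 0$ for all $a \geq 0$, so the sign of $\gamma(a,b)$ on $(0,b)$ is governed entirely by the sign of $\kappa(b-a)$. Thus I would first record that $a \mapsto \Gamma(a,b)$ is continuously differentiable on $[0,b)$ with derivative $\gamma(a,b)$ (the formula \eqref{gamma_small} being valid for $0 < a < b$, and extending to $a = 0$ by right-continuity, which is what \eqref{a_b_zero_equivalence} already exploits through $\gamma(0+,b)$).

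Next I would analyze the sign of $\kappa(b-a)$ as $a$ ranges over $[0,b)$. As $a$ increases from $0$ to $b$, the argument $y = b-a$ decreases from $b$ to $0$. Since $Z^{(q+r)}$ is strictly increasing and, under Assumption \ref{assumption_beta_r_q}, the coefficient $\beta(q+r) - r > 0$, the map $y \mapsto \kappa(y) = 1 - \frac{1}{q}(\beta(q+r)-r)Z^{(q+r)}(y)$ is strictly decreasing in $y$; equivalently, $a \mapsto \kappa(b-a)$ is strictly increasing in $a$ (this is exactly the computation $\frac{\partial}{\partial a}\kappa(b-a) = \frac{q+r}{q}(\beta(q+r)-r)W^{(q+r)}(b-a) > 0$ already displayed before \eqref{min_a}). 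Now $\epsilon$ defined by \eqref{def-epsilon} is the unique positive root of $\kappa$, with $\kappa(0) > 0$ (displayed just after \eqref{def-epsilon}) and $\kappa$ strictly decreasing to $-\infty$. Hence: if $b \leq \epsilon$, then $b - a < b \leq \epsilon$ for $a \in (0,b)$, so $\kappa(b-a) > \kappa(\epsilon) = 0$ wait — more carefully, $b - a \le b \le \epsilon$ gives $\kappa(b-a) \ge \kappa(\epsilon) = 0$ with equality only possible at the left endpoint; in any case $\gamma(a,b) \ge 0$ on $[0,b)$ and is strictly positive on the interior, so $a \mapsto \Gamma(a,b)$ is strictly increasing on $[0,b]$, giving $\underline{\Gamma}(b) = \Gamma(0,b)$. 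This is part (1). If instead $b > \epsilon$, then $\kappa(b-a) = 0$ exactly when $b - a = \epsilon$, i.e.\ $a = b - \epsilon \in (0,b)$; for $a < b-\epsilon$ we have $b - a > \epsilon$ so $\kappa(b-a) < 0$ and $\gamma(a,b) < 0$, while for $a > b - \epsilon$ we have $b - a < \epsilon$ so $\kappa(b-a) > 0$ and $\gamma(a,b) > 0$. Therefore $\Gamma(\cdot,b)$ strictly decreases on $[0,b-\epsilon)$, strictly increases on $(b-\epsilon,b)$, attains its minimum at $a(b) = b - \epsilon$, and $\gamma(b-\epsilon,b) = 0$. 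This is part (2), and the identity $a(b) = (b-\epsilon)\vee 0$ together with \eqref{Gamma_min_expression} follows by combining the two cases.

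I do not anticipate a genuine obstacle here: every ingredient — the formula \eqref{gamma_small}, the positivity of $Z^{(q)}$, the strict monotonicity of $Z^{(q+r)}$, the sign of $\beta(q+r)-r$ under Assumption \ref{assumption_beta_r_q}, and the existence and uniqueness of $\epsilon$ via \eqref{def-epsilon} — is already in hand. The only point requiring a little care is the behavior at the endpoint $a = 0$: one must use the one-sided derivative $\gamma(0+,b)$ and the continuity of $\Gamma(\cdot,b)$ at $0$ to conclude monotonicity on the closed interval $[0,b)$ rather than just the open interval; this is routine since $\Gamma(a,b;x)$ and hence $\Gamma(a,b)$ is visibly continuous in $a$ from its definition in \eqref{gamma_x0}. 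A second minor point is that in the boundary case $b = \epsilon$ the minimizer $a(b)$ coincides with the left endpoint $0$, consistently with $a(b) = (b-\epsilon)\vee 0 = 0$, so part (1) indeed covers $b \le \epsilon$ as stated.
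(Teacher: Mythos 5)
Your proposal is correct and follows essentially the same route as the paper: the lemma there is stated as a summary of the immediately preceding discussion, which likewise reads off the sign of $\gamma(a,b)=\tfrac{r}{q+r}Z^{(q)}(a)\kappa(b-a)$ from the strict monotonicity of $a\mapsto\kappa(b-a)$, the positivity of $\gamma(b-,b)$, and the unique root $\epsilon$ of $\kappa$ from \eqref{def-epsilon}, yielding $a(b)=(b-\epsilon)\vee 0$.
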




We now show the following monotonicity property.
\begin{lemma} \label{Gamma_inf_increasing}
The function $\underline{\Gamma}(b)$ is (strictly) increasing to $\infty$.
\end{lemma}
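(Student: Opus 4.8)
The plan is to split the analysis according to the two regimes identified in Lemma \ref{lemma_Gama_wrt_a}, using the explicit minimizer $a(b) = (b-\epsilon)\vee 0$ and the expression \eqref{Gamma_min_expression}. For $b \le \epsilon$ we have $\underline{\Gamma}(b) = \Gamma(0,b)$, while for $b > \epsilon$ we have $\underline{\Gamma}(b) = \Gamma(b-\epsilon, b)$; I would establish strict monotonicity and divergence on each piece separately and then check that the two pieces join monotonically at $b = \epsilon$ (they agree there since $a(b)$ is continuous). The divergence to $\infty$ should be read off from the exponential growth of the scale-function terms appearing in $\Gamma$: as $b \uparrow \infty$, by \eqref{W^{(q)}_limit} and the definitions in Section \ref{section_computation_NPV}, $Z^{(q)}(b)$, $\overline{Z}^{(q)}(b)$, $\overline{Z}^{(q,r)}_{a-b}(\cdot)$ and $\overline{Z}^{(q+r)}(b-a)$ all grow at least like $\mathrm{e}^{\Phi(q)b}$ or $\mathrm{e}^{\Phi(q+r)\epsilon}\mathrm{e}^{\Phi(q+r)(b-\epsilon)}$, so the dominant coefficient (which is positive under Assumption \ref{assumption_beta_r_q}) forces $\underline{\Gamma}(b) \to \infty$.

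For strict monotonicity, the cleanest route is to differentiate $b \mapsto \underline{\Gamma}(b)$ using the envelope theorem: since $a(b)$ is the interior minimizer of $a \mapsto \Gamma(a,b)$ (or the boundary point $0$), we have
\begin{align*}
\frac{\diff}{\diff b}\underline{\Gamma}(b) = \frac{\partial}{\partial b}\Gamma(a,b)\Big|_{a = a(b)},
\end{align*}
so it suffices to show $\partial_b \Gamma(a,b) > 0$ whenever $(a,b)$ satisfies either $a = 0, b \le \epsilon$ or $\gamma(a,b) = 0$ (the latter being exactly the relation $Z^{(q+r)}(b-a) = q/(\beta(q+r)-r)$ from $\mathbf{C_a}$). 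I would compute $\partial_b \Gamma(a,b)$ by straightforward differentiation of the definition of $\Gamma(a,b)$, using Remark \ref{remark_Z_q_r_another_expression} to handle the $\overline{Z}^{(q,r)}_{a-b}$ term, and then substitute the $\mathbf{C_a}$ relation to collapse the expression; the $\beta - r/(q+r) > 0$ and $\beta < 1$ inequalities of Assumption \ref{assumption_beta_r_q} should make the resulting quantity manifestly positive. On the segment $b \le \epsilon$ where $a(b)=0$, one instead checks $\partial_b \Gamma(0,b) > 0$ directly, which is even simpler because $\overline{Z}^{(q+r)}$ and $\overline{Z}^{(q,r)}_{0-b}$ are increasing.

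I expect the main obstacle to be the bookkeeping in computing $\partial_b \Gamma(a,b)$ cleanly enough that the sign is transparent after inserting $\mathbf{C_a}$ — the function $\Gamma(a,b;x)$ is a fairly intricate combination of the various scale functions, and its $b$-derivative (holding $a$ fixed) involves $\partial_b Z^{(q,r)}_{a-b}$, which by Remark \ref{remark_Z_q_r_another_expression} produces both a $W^{(q)}(b-x)$-type term and an integral term. A secondary, more routine point is verifying that the left and right pieces match up with the correct inequality at $b=\epsilon$ and, at $b = 0+$, that $\underline\Gamma$ starts from a finite value (indeed $\underline\Gamma(0+) = \Gamma(0,0) = \beta\psi'(0+)/q + \ldots$), so that ``increasing to $\infty$'' is the complete description. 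If the envelope-theorem differentiation turns out to be messy at the boundary $a=0$, an alternative is to argue monotonicity by a direct comparison: for $b_1 < b_2$, bound $\underline{\Gamma}(b_1) = \Gamma(a(b_1),b_1) \le \Gamma(a(b_1)\wedge b_2\text{-feasible choice}, b_2)$ and show the latter is $< \Gamma(a(b_2),b_2)$ using monotonicity of $\Gamma$ in $b$ for fixed admissible $a$; but I would try the envelope approach first as it is the most economical.
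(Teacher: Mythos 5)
Your plan is correct and would yield a valid proof, but it is organized differently from the paper's argument, and one of your anticipated difficulties does not actually arise. The paper's single key computation is the one you also identify — differentiating $\Gamma(a,b)$ in $b$ via Remark \ref{remark_Z_q_r_another_expression} — but the paper then observes that every term in $\partial\Gamma(a,b)/\partial b$ except the prefactor is nonnegative and $Z^{(q)}(b)\geq 1$, giving the \emph{uniform} bound $\inf_{0\leq a<b}\partial\Gamma(a,b)/\partial b\geq \beta-r/(q+r)>0$ as in \eqref{Gamma_derivative_b_uniform_bound}. So no substitution of $\mathbf{C_a}$ is needed to see the sign: positivity of the $b$-derivative holds everywhere, not just along the minimizing curve. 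With that uniform bound in hand, the paper proves strict monotonicity not by the envelope theorem but by your ``alternative'' route: for $b_1<b_2$ with $b_2-b_1<\epsilon$, either both minimizers are at $0$ (so $\underline\Gamma(b_2)=\Gamma(0,b_2)>\Gamma(0,b_1)=\underline\Gamma(b_1)$), or $a(b_2)=b_2-\epsilon\in(0,b_1)$, in which case $\underline\Gamma(b_2)=\min_{0\leq y\leq b_1}\Gamma(y,b_2)>\min_{0\leq y\leq b_1}\Gamma(y,b_1)=\underline\Gamma(b_1)$ by pointwise domination over the common domain $[0,b_1]$. Your envelope-theorem version is equally legitimate here (the first-order condition $\gamma(b-\epsilon,b)=0$ from Lemma \ref{lemma_Gama_wrt_a} kills the $a$-derivative term, and $a(b)=(b-\epsilon)\vee 0$ is explicit and piecewise smooth), so both approaches buy the same thing; the paper's comparison-of-minima argument avoids any regularity discussion of $b\mapsto a(b)$ at the kink $b=\epsilon$. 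Where the two genuinely diverge is the proof that $\underline\Gamma(b)\to\infty$: the uniform derivative bound gives this immediately (linear growth), whereas your proposed appeal to exponential growth of the scale functions needs more care than you suggest — along the minimizing path $a(b)=b-\epsilon$ the term $\overline Z^{(q+r)}(b-a)=\overline Z^{(q+r)}(\epsilon)$ stays \emph{bounded}, so you would have to isolate which terms (e.g.\ $\overline Z^{(q)}(b-\epsilon)$ and $\overline Z^{(q,r)}_{a-b}(a)$) actually carry the growth. I would recommend adopting the uniform lower bound \eqref{Gamma_derivative_b_uniform_bound} as the single engine for both monotonicity and divergence, as the paper does.
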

\begin{proof}
By Remark \ref{remark_Z_q_r_another_expression}, for $b > a \geq 0$,
\begin{align*}
\frac \partial {\partial b}\Gamma(a,b)
&= \Big(\beta - \frac r {q+r}  \Big)  \Big[Z^{(q)} (b) + r W^{(q)}(a) \overline{Z}^{(q+r)}(b-a) \\ &+ r \int_0^{b-a} W^{(q)\prime} (b-u) \overline{Z}^{(q+r)} (u) \diff u + \frac r q  Z^{(q)}(a)  Z^{(q+r)}(b-a) \Big]. 
\end{align*}

By this and Assumption \ref{assumption_beta_r_q}, we obtain a uniform lower bound
\begin{align} \label{Gamma_derivative_b_uniform_bound}
\inf_{0 \leq a < b} \frac \partial {\partial b} \Gamma(a,b) & \geq \beta - \frac r {q+r}  > 0.
\end{align}

Recall $\epsilon > 0$ such that \eqref{def-epsilon} holds.
Fix $b_1 < b_2$ such that $b_2 - b_1 < \epsilon$ (or $b_2 - \epsilon < b_1$).
\begin{enumerate}
\item Suppose $b_2 \leq \epsilon$.  Then, because $b_1 - \epsilon, b_2 - \epsilon \leq 0$ and by \eqref{Gamma_min_expression} and \eqref{Gamma_derivative_b_uniform_bound},
\begin{align*}
\underline{\Gamma}(b_2) = \Gamma(0, b_2) > \Gamma(0, b_1) = \underline{\Gamma}(b_1).
\end{align*}
\item Suppose $b_2 > \epsilon$. Then, because $b_2 - \epsilon \in (0, b_1)$  and by \eqref{Gamma_min_expression} and \eqref{Gamma_derivative_b_uniform_bound},
\begin{align*}
\underline{\Gamma}(b_2) = \min_{0 \leq y \leq b_1} \Gamma (y, b_2) > \min_{0 \leq y \leq b_1} \Gamma (y, b_1) =  \underline{\Gamma}(b_1). \end{align*}
\end{enumerate}

Finally, it is clear by \eqref{Gamma_derivative_b_uniform_bound} that $\underline{\Gamma}(b) \xrightarrow{b \uparrow \infty} \infty$.
\end{proof}

In view of Lemma \ref{Gamma_inf_increasing}, we first start at $b=0$ and increase the value of $b$ until we attain the desired pair $(a^*, b^*)$ such that 
\begin{align*}
\underline{\Gamma}(b^*) = 0 \quad \textrm{and} \quad a^* = a(b^*).
\end{align*}

For the case $b = 0$, by \eqref{assumption_drift},
\begin{align}
\underline{\Gamma} (0) = \Gamma(0,0)  &=\beta\psi'(0+)/q  <0. \label{Gamma_zero_inf}
\end{align}


Hence we shall increase the value of $b$ until we get $b^*$ such that $\underline{\Gamma}(b^*) = 0$.  
Here, there are two scenarios:
\begin{enumerate}
	\item $\underline{\Gamma}(b^*) =\min_{0 \leq a \leq b^*} \Gamma(a,b^*)= 0$ is attained at a local minimizer $a^* = b^* - \epsilon$;
	\item $\underline{\Gamma}(b^*) =\min_{0 \leq a \leq b^*} \Gamma(a,b^*) = 0$ is attained at zero $a^* = 0$.
\end{enumerate}
In both cases, we have $\Gamma(a^*, b^*) = 0$ (or $\mathbf{C_b}$).  In addition, for (1), $\gamma(a^*,b^*)
= 0$ holds as well, while for (2), we must have that $\Gamma(\cdot, b^*)$ is monotonically increasing (or $\partial \Gamma(\cdot, b^*) / {\partial a}  \geq 0$) and hence \eqref{a_b_zero_equivalence} gives the inequality: 
\begin{align*}
\gamma(0+, b^*)
= \frac r {q+r}   \Big( 1 - \frac 1 q    \Big( \beta (q+r) - r \Big) Z^{(q+r)}(b^*)  \Big)\geq 0.
\end{align*}


We summarize these in the following lemma.
\begin{lemma} \label{lemma_existence}
	There exist a  pair $0 \leq a^*  < b^*$ such that one of the following holds. 
	\begin{enumerate}
		\item $0 < a^* < b^*$ such that $\Gamma(a^*, b^*) = \gamma(a^*,b^*)
		 = 0$.
		\item $a^* = 0 < b^*$ such that $\Gamma(a^*, b^*) = 0$ and $Z^{(q)}(b^*-a^*)= Z^{(q)}(b^*) \leq q/(\beta(q+r)-r)$.		
	\end{enumerate}
\end{lemma}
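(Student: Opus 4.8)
The strategy is to exploit the analytical structure established just before the statement: the one-variable reduction $\underline{\Gamma}(b) = \Gamma((b-\epsilon)\vee 0, b)$, the monotonicity of $\underline{\Gamma}$ proven in Lemma \ref{Gamma_inf_increasing}, and the sign information at $b = 0$ from \eqref{Gamma_zero_inf}. First I would invoke Lemma \ref{Gamma_inf_increasing} to conclude that $\underline{\Gamma}$ is a strictly increasing function from $[0,\infty)$ onto an interval; combined with $\underline{\Gamma}(0) = \beta\psi'(0+)/q < 0$ (using \eqref{assumption_drift} and \eqref{Gamma_zero_inf}) and $\underline{\Gamma}(b)\uparrow\infty$, the intermediate value theorem (and continuity of $\underline{\Gamma}$, which follows from the continuity of $\Gamma$ and of $b\mapsto(b-\epsilon)\vee0$) yields a unique $b^* > 0$ with $\underline{\Gamma}(b^*) = 0$. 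Set $a^* := a(b^*) = (b^*-\epsilon)\vee 0$, so that $0 \le a^* < b^*$ and $\Gamma(a^*, b^*) = \underline{\Gamma}(b^*) = 0$, i.e.\ $\mathbf{C_b}$ holds.

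Next I would split into the two cases according to whether $b^* > \epsilon$ or $b^* \le \epsilon$, using the characterization \eqref{a_b_zero_equivalence}. If $b^* > \epsilon$, then $a^* = b^* - \epsilon \in (0, b^*)$, and by Lemma \ref{lemma_Gama_wrt_a}(2) the minimizer $a(b^*)$ is a genuine interior local minimum of $a \mapsto \Gamma(a, b^*)$, so $\gamma(a^*, b^*) = \partial\Gamma(a^*,b^*)/\partial a = 0$; this gives case (1). If instead $b^* \le \epsilon$, then $a^* = 0$ and by Lemma \ref{lemma_Gama_wrt_a}(1) the map $a\mapsto\Gamma(a,b^*)$ is strictly increasing on $[0,b^*]$, so $\gamma(0+, b^*) \ge 0$; writing this out via \eqref{gamma_small} and the definition of $\kappa$ gives $1 - \tfrac1q(\beta(q+r)-r)Z^{(q+r)}(b^*) \ge 0$, which (since $Z^{(q)}(b^*) > 0$) rearranges to $Z^{(q)}(b^*-a^*) = Z^{(q)}(b^*) = Z^{(q+r)}\,$—wait, I should be careful here: the inequality one extracts is $Z^{(q+r)}(b^*) \le q/(\beta(q+r)-r)$, and since $q < q+r$ one has $Z^{(q)}(b^*) \le Z^{(q+r)}(b^*) \le q/(\beta(q+r)-r)$, which is exactly the claim in case (2) with $a^* = 0$.

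The main obstacle — really the only nontrivial point — is ensuring that the dichotomy is exhaustive and that the claimed inequalities in case (2) come out in the stated form; this is purely a matter of carefully tracking which version of $Z$ appears and in which direction the monotonicity of $q \mapsto Z^{(q)}(x)$ points. Everything else (continuity of $\underline{\Gamma}$, the IVT step, the reading-off of $\gamma = 0$ versus $\gamma \ge 0$) is immediate from the lemmas already in hand, in particular Lemmas \ref{lemma_Gama_wrt_a} and \ref{Gamma_inf_increasing} and the equivalences \eqref{a_b_zero_equivalence}. I would therefore present the proof as: (i) apply the IVT to get $b^*$; (ii) define $a^*$ and note $\mathbf{C_b}$; (iii) dichotomize on $b^*$ versus $\epsilon$ and extract $\gamma(a^*,b^*) = 0$ or the inequality, respectively; (iv) in case (2), convert the $Z^{(q+r)}$ inequality to the stated $Z^{(q)}$ inequality using $q < q+r$ and monotonicity of $Z^{(\cdot)}$.
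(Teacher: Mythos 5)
Your proposal is correct and follows essentially the same route as the paper: root-finding for $\underline{\Gamma}$ via its strict monotonicity (Lemma \ref{Gamma_inf_increasing}), the negative starting value \eqref{Gamma_zero_inf}, and the dichotomy $b^*>\epsilon$ versus $b^*\le\epsilon$ from \eqref{a_b_zero_equivalence} to read off $\gamma(a^*,b^*)=0$ or $\gamma(0+,b^*)\ge 0$. Your extra step converting $Z^{(q+r)}(b^*)\le q/(\beta(q+r)-r)$ into the stated $Z^{(q)}(b^*)\le q/(\beta(q+r)-r)$ via monotonicity of $q\mapsto Z^{(q)}(x)$ is a sensible patch for a point the paper glosses over (the paper's own computation of $\gamma(0+,b^*)$ produces the $Z^{(q+r)}$ inequality, which is in fact the version needed later in Lemma \ref{lemma_slopes}, so the $Z^{(q)}$ in the statement is best read as a typo rather than the target).
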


\subsection{Convergence results} \label{subsection_convergence}

We conclude this section with convergence results with respect to $\beta$ and $r$. The following results indicate that the optimal hybrid barrier strategies approach the pure continuous/periodic barrier strategies, which are shown  in the following section to be optimal when Assumption \ref{assumption_beta_r_q} does not hold.

\begin{lemma} \label{lemma_convergence_beta_r}
\begin{enumerate}
\item As $\beta \downarrow r /(q+r)$, we have $b^*  \rightarrow \infty$.
\item As $r \uparrow q \beta / (1 - \beta)$ (and then $r / (q+r) \uparrow \beta$), we have $b^*  \rightarrow \infty$.
\item As $\beta \uparrow  1$, we have  $b^* - a^* \rightarrow 0$.
\end{enumerate}
\end{lemma}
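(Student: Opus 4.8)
The plan is to establish each of the three convergence statements by tracking how the characterizing equation $\underline{\Gamma}(b^*) = 0$ (equivalently $\mathbf{C_b}$ together with the minimization in Lemma~\ref{lemma_Gama_wrt_a}) and the auxiliary threshold $\epsilon = \epsilon(\beta, r)$ from \eqref{def-epsilon} respond to perturbations of $\beta$ and $r$. Throughout I would treat $b^*$ as an implicit function of the parameters and exploit the monotonicity and uniform-lower-bound estimates already proven, namely \eqref{Gamma_derivative_b_uniform_bound} and Lemma~\ref{Gamma_inf_increasing}, rather than attempting to solve anything explicitly.

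For (1) and (2), the key observation is that $\underline\Gamma(b) = \Gamma((b-\epsilon)\vee 0, b)$ and, by \eqref{gamma_x0} and the definition of $\Gamma(a,b;x)$, the coefficient $\beta - r/(q+r)$ multiplies the only terms in $\Gamma$ that grow without bound in $b$; the remaining terms are $\tfrac{r}{q+r}\overline Z^{(q)}(a) + \beta\psi'(0+)/q$, which for $a = (b-\epsilon)\vee 0$ stay controlled once one also notes $\epsilon(\beta,r) \to \infty$ as $\beta \downarrow r/(q+r)$ (since the left side of \eqref{def-epsilon} then tends to $1$ for every fixed $\epsilon$, forcing the root to escape to infinity). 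So as $\beta - r/(q+r) \downarrow 0$, for any \emph{fixed} $b$ we get $a(b) = (b-\epsilon)\vee 0 = 0$ eventually and $\underline\Gamma(b) = \Gamma(0,b) \to \beta\psi'(0+)/q + \tfrac{r}{q+r}\overline Z^{(q)}(0) < 0$ (using $\psi'(0+) < 0$ from \eqref{assumption_drift}); since $\underline\Gamma$ is increasing in $b$ and its value at any fixed $b$ is eventually negative, the root $b^*$ must diverge. Statement (2) is the same argument: $r \uparrow q\beta/(1-\beta)$ is exactly $r/(q+r) \uparrow \beta$, so the driving coefficient $\beta - r/(q+r)$ again vanishes and $\epsilon \to \infty$, and the identical reasoning forces $b^* \to \infty$. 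I would make "eventually negative for fixed $b$, hence root diverges" rigorous by a short contradiction: if $b^*$ stayed bounded along a subsequence, evaluate $\underline\Gamma$ at a fixed $b$ larger than that bound and derive $0 = \underline\Gamma(b^*) < \underline\Gamma(b) < 0$.

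For (3), as $\beta \uparrow 1$ the relevant quantity is $\epsilon$, which by \eqref{def-epsilon} solves $Z^{(q+r)}(\epsilon) = q/(\beta(q+r)-r)$; as $\beta \uparrow 1$ the right side decreases to $q/(q+r-r) = 1 = Z^{(q+r)}(0)$, and since $Z^{(q+r)}$ is continuous, strictly increasing, with value $1$ at $0$, we get $\epsilon = \epsilon(\beta) \downarrow 0$. Now by Lemma~\ref{lemma_existence}, either $a^* = b^* - \epsilon$, in which case $b^* - a^* = \epsilon \to 0$ immediately, or $a^* = 0$ with $Z^{(q)}(b^*) \leq q/(\beta(q+r)-r) \to 1$, which forces $b^* \to 0$ as well (again by strict monotonicity of $Z^{(q)}$ and $Z^{(q)}(0) = 1$), and then $b^* - a^* = b^* \to 0$; either way the conclusion follows. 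One should check the two cases do not oscillate pathologically, but since both give $b^* - a^* \le \epsilon(\beta)$ (the second because $a^* = 0$ and $b^* \to 0 < \epsilon$ eventually puts us in case~1 anyway, or one simply bounds $b^* - a^* \le \max(\epsilon, b^*)$ and sends both to $0$), the uniform bound $b^* - a^* \to 0$ is clean.

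The main obstacle I anticipate is part~(1)/(2): making precise the claim that $\epsilon(\beta,r) \to \infty$ and that it does so fast enough that $a(b) = (b-\epsilon)\vee 0$ is genuinely $0$ for each fixed $b$ in the limit — this requires knowing $Z^{(q+r)}$ is bounded on compacts (true, it is finite and increasing) so that $Z^{(q+r)}(\epsilon) \to \infty$ can only happen with $\epsilon \to \infty$. Once that is in hand, the limiting value of $\underline\Gamma$ at a fixed $b$ is just the two "non-escaping" terms evaluated at $a = 0$, which is manifestly the strictly negative constant $\beta\psi'(0+)/q$ (the $\tfrac{r}{q+r}\overline Z^{(q)}(0)$ term vanishes since $\overline Z^{(q)}(0) = 0$), and the contradiction argument closes the proof. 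The monotonicity facts I need — $Z$, $\overline Z$, $W$ increasing and positive, $\psi'(0+) < 0$ — are all available from \eqref{assumption_drift} and the definitions in Section~\ref{section_computation_NPV}.
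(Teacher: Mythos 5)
Your proposal is correct and follows essentially the same route as the paper: for (1) and (2) both arguments reduce to the explicit form of $\Gamma(0,b)=\frac{\beta\psi'(0+)}{q}+\frac{q+r}{q}\bigl(\beta-\frac{r}{q+r}\bigr)\overline{Z}^{(q+r)}(b)$ together with the monotonicity of $\underline{\Gamma}$ from Lemma \ref{Gamma_inf_increasing} (the paper phrases this by introducing the root $b_0$ of $\Gamma(0,\cdot)$ and showing $b_0\le b^*$, while you phrase it as a contradiction, which is equivalent), and for (3) both use $b^*-a^*\le\epsilon$ with $\epsilon\downarrow 0$ from \eqref{def-epsilon}. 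The only cosmetic difference is that your detour through $\epsilon\to\infty$ in parts (1)--(2) is not needed, since $\underline{\Gamma}(b)\le\Gamma(0,b)$ holds by definition of the minimum.
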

\begin{proof}
(1) and (2): For $b > 0$, we have
\begin{align*}
\Gamma(0,b) 
=  \frac {\beta \psi'(0+)} {q}   + \frac {q+r} q \Big(\beta - \frac r {q+r} \Big)  \overline{Z}^{(q+r)}(b). \end{align*}
This, Assumption  \ref{assumption_beta_r_q} and \eqref{Gamma_zero_inf} imply that $b \mapsto \Gamma(0,b)$ starts at a negative value $\Gamma(0,0)$ (see \eqref{Gamma_zero_inf}) and strictly  increases to infinity.
Hence, we can define its unique root $b_0 > 0$ such that
$\Gamma(0, b_0) = 0$.

When $a(b_0) > 0$, it is clear that $b_0 < b^*$ because, by Lemma \ref{lemma_Gama_wrt_a},
$\underline{\Gamma} (b_0) = \Gamma (a(b_0), b_0)  < \Gamma(0, b_0) = 0 = \underline{\Gamma} (b^*)$
and $\underline{\Gamma}$ is a strictly increasing function by Lemma \ref{Gamma_inf_increasing}.

On the other hand, when $a(b_0) = 0$, then it is clear that $b_0 = b^*$ (and $a^* = 0$). In sum, we have 
\begin{align}
b_0 \leq b^*. \label{b_0_b_star_relation}
\end{align}

Now, because
\begin{align*}
0 < - \frac {\beta \psi'(0+)} {q}   =  \frac {q+r} q \Big(\beta - \frac r {q+r} \Big)  \overline{Z}^{(q+r)}(b_0). \end{align*}
we have $b_0 \rightarrow \infty$ as $\beta \downarrow r /(q+r)$ and as $r \uparrow q \beta / (1 - \beta)$. By \eqref{b_0_b_star_relation}, we have (1) and (2).

(3) From \eqref{def-epsilon} and Lemma \ref{lemma_existence}, it is clear that $b^* - a^* \leq \epsilon$.
Hence, as $\beta \uparrow  1$, it is clear from \eqref{def-epsilon} that $\epsilon \rightarrow 0$ and hence $b^* - a^* \rightarrow 0$.
\end{proof}

\section{Optimality of the hybrid barrier strategies} \label{section_optimality}
In this section, we show the optimality of $\pi_{a^*, b^*}$ under Assumption \ref{assumption_beta_r_q}.
%
By Lemma \ref{lemma_existence},  $\mathbf{C}_b$ is always satisfied and hence, by \eqref{vf_ff},
\begin{multline} \label{value_function_final}
v_{a^*,b^*}(x)
=
-\Gamma(a^*,b^*;x) = - \frac r {q+r} \overline{Z}^{(q)}(a^*-x)- \frac {\beta \psi'(0+)} {q}  \\  - \Big(\beta - \frac r {q+r} \Big) \Big[ \overline{Z}^{(q,r)}_{a^*-b^*}(a^*-x) + \frac r q  Z^{(q)}(a^*-x)    \overline{Z}^{(q+r)}(b^*-a^*) \Big]. 
\end{multline}
We show that this solves the variational inequalities given in Lemma \ref{verificationlemma_2}.

Recall the mapping $x \mapsto \mathcal{L} g(x)$ as in \eqref{generator} for any sufficiently smooth function $g$ of $x$.
\begin{lemma}\label{ver_1}
We have,
for $x \geq b^*$,
\begin{align*}
(\mathcal{L}-q)v_{a^*,b^*}(x) =\frac {qr} {q+r} (a^*-x) - \Big(\beta - \frac r {q+r}  \Big) \Big[ q (x-b^*) -r  \overline{Z}^{(q+r)}(b^*-a^*)\Big], 
\end{align*}
for $a^* < x < b^*$,
\begin{align*}
(\mathcal{L}-q)v_{a^*,b^*}(x) =r \Big[\frac {q} {q+r} (a^*-x) - \Big(\beta - \frac r {q+r}  \Big) \Big(  \overline{Z}^{(q+r)}(b^*-x)-  \overline{Z}^{(q+r)}(b^*-a^*)\Big) \Big], 
\end{align*}
and, for $0 < x \leq a^*$,  $(\mathcal{L}-q)v_{a^*,b^*}(x) = 0$.

\end{lemma}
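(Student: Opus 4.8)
The plan is to compute $(\mathcal{L}-q)v_{a^*,b^*}$ directly from the closed-form expression \eqref{value_function_final} by exploiting the fact that $v_{a^*,b^*}$ is built out of scale functions, for which the action of $\mathcal{L}-q$ is well understood. The key structural observation is that on $(0,\infty)$ the function $v_{a^*,b^*}(x)= -\Gamma(a^*,b^*;x)$ decomposes (up to the constant $-\beta\psi'(0+)/q$, which satisfies $(\mathcal{L}-q)(\mathrm{const}) = -q\cdot\mathrm{const}$) into a piece involving $\overline{Z}^{(q)}(a^*-x)$, $Z^{(q)}(a^*-x)$, and a piece involving $\overline{Z}^{(q,r)}_{a^*-b^*}(a^*-x)$. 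For the first group one uses the standard identities: writing $h(x):=\overline{Z}^{(q)}(a^*-x)$ and $\tilde h(x):=Z^{(q)}(a^*-x)$, one has, on the region $x<a^*$ (where all arguments are positive), $(\mathcal{L}-q)\tilde h \equiv 0$ and $(\mathcal{L}-q)h(x) = $ a linear function coming from the fact that $\overline{Z}^{(q)}$ is the antiderivative of $Z^{(q)}$; for $x\geq a^*$ the scale functions vanish and these terms reduce to explicit polynomials. For the $\overline{Z}^{(q,r)}_{a^*-b^*}$ piece I would use Remark \ref{remark_Z_q_r_another_expression} to rewrite it as $\overline{Z}^{(q)}(b^*-x)+r\int_0^{b^*-a^*}W^{(q)}(b^*-u-x)\overline{Z}^{(q+r)}(u)\,\diff u$, so that its behavior under $\mathcal{L}-q$ is governed by the known actions of $(\mathcal{L}-q)$ on $\overline{Z}^{(q)}(b^*-\cdot)$ and on $W^{(q)}(b^*-\cdot)$, the latter being killed by $\mathcal{L}-q$ away from the boundary.

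More concretely, I would carry out the three regimes separately. \textbf{Region $0<x\leq a^*$:} here $a^*-x\geq 0$ and $b^*-x> b^*-a^*>0$; I claim every term in $-\Gamma(a^*,b^*;x)$ contributes zero. The $Z^{(q)}(a^*-x)$ and $\overline{Z}^{(q)}(a^*-x)$ contributions combine with the constant and the $\overline{Z}^{(q,r)}_{a^*-b^*}$ term; using that $(\mathcal{L}-q)$ annihilates $Z^{(q)}$, $\overline{Z}^{(q)}$ up to lower-order/linear terms (in fact on this region one checks the linear pieces cancel against the constant $\psi'(0+)$-term precisely because of the drift assumption \eqref{assumption_drift}), and that $(\mathcal{L}-q)W^{(q)}(b^*-x)=0$ for $x<b^*$, one gets identically $0$. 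This is the cleanest region and a good sanity check. \textbf{Region $a^*<x<b^*$:} now $a^*-x<0$, so $W^{(q)}(a^*-x)=0$, $Z^{(q)}(a^*-x)=1$, $\overline{Z}^{(q)}(a^*-x)=\overline{Z}^{(q)}(0)-\int\cdots$ collapses so that the $\overline{Z}^{(q)}(a^*-x)$ term becomes linear in $x$; meanwhile $b^*-x\in(0,b^*-a^*)$ so $W^{(q)}(b^*-x)$ is still in the active region and $(\mathcal{L}-q)$ acting on the $\overline{Z}^{(q,r)}_{a^*-b^*}$ representation produces the $\overline{Z}^{(q+r)}(b^*-x)-\overline{Z}^{(q+r)}(b^*-a^*)$ combination (the upper limit $u=b^*-a^*$ term in the convolution integral surviving because $W^{(q)}(0)$ enters). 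Collecting coefficients of $1$, $x$, and the $\overline{Z}^{(q+r)}$ terms yields exactly the stated formula. \textbf{Region $x\geq b^*$:} here also $b^*-x\leq 0$, so all scale-function arguments are non-positive and everything reduces to explicit linear functions of $x$; one simply expands and matches. Alternatively, since Theorem \ref{theorem_vf} gives $v_{a^*,b^*}(x)=\beta(x-b^*)+v_{a^*,b^*}(b^*)$ for $x\geq b^*$, I can compute $(\mathcal{L}-q)$ of this affine function using the known value of $\int_{(0,\infty)}(e^{-\theta z}-1+\theta z 1_{\{z<1\}})\Pi(\diff z)$ at $\theta=0$ together with $-\psi'(0+)=\E X_1$, which reproduces the first displayed identity after substituting $v_{a^*,b^*}(b^*)$.

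The main obstacle, and where the bookkeeping is heaviest, is the middle region $a^*<x<b^*$: one must track the jump integral $\int_{(0,\infty)}[v(x+z)-v(x)-v'(x)z\mathbf{1}_{\{z<1\}}]\Pi(\diff z)$ across the boundaries $a^*$ and $b^*$ — i.e., the integrand changes form as $x+z$ crosses $a^*$ and $b^*$ — so one cannot naively apply a single scale-function identity. The right way to handle this is to \emph{not} split the jump integral by hand but instead to use the global identities for $(\mathcal{L}-q)$ applied to $\overline{Z}^{(q)}(c-\cdot)$, $Z^{(q)}(c-\cdot)$, $W^{(q)}(c-\cdot)$, $\overline{W}^{(q)}(c-\cdot)$ valid on all of $\mathbb{R}$ in the appropriate distributional/pointwise sense (these are standard: e.g. $(\mathcal{L}-q)\overline{Z}^{(q)}(x) = $ linear, $(\mathcal{L}-q)W^{(q)}=0$ on $(0,\infty)$ and $=$ a delta-type correction at $0$ which is irrelevant here since we are away from the argument being $0$). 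Once one commits to manipulating these building blocks symbolically and substituting Remark \ref{remark_Z_q_r_another_expression}, the computation is routine but long; hence, as the authors note, the full details are best relegated to an appendix, with the body of the proof recording only the three resulting formulas and the key cancellations driven by $\mathbf{C}_b$ (i.e. $\Gamma(a^*,b^*)=0$) and the drift identity \eqref{assumption_drift}.
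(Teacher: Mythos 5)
Your proposal follows essentially the same route as the paper's proof: apply $(\mathcal{L}-q)$ term by term to the decomposition \eqref{value_function_final}, using the standard harmonicity identities $(\mathcal{L}-q)Z^{(q)}(a^*-\cdot)=0$ and $(\mathcal{L}-q)\overline{Z}^{(q)}(a^*-\cdot)=\psi'(0+)$ on $(0,a^*)$ together with the corresponding $(q+r)$-identities on $(a^*,b^*)$ and the affine reduction for $x\ge b^*$. Two small caveats: the cancellation on $(0,a^*]$ is purely algebraic in the coefficients multiplying $\psi'(0+)$ (namely $-\tfrac{r}{q+r}+\beta-(\beta-\tfrac{r}{q+r})=0$) rather than a consequence of \eqref{assumption_drift}, and on $(a^*,b^*)$ it is cleaner to observe that $\overline{Z}^{(q,r)}_{a^*-b^*}(a^*-x)$ collapses to $\overline{Z}^{(q+r)}(b^*-x)$ because $W^{(q)}$ vanishes on the negative half-line, rather than to push $(\mathcal{L}-q)$ through the convolution of Remark \ref{remark_Z_q_r_another_expression}, where the argument of $W^{(q)}$ does cross zero as $u$ varies and the boundary contribution you dismiss as irrelevant would in fact matter.
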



\begin{proof}For $x > 0$, by \eqref{value_function_final},
\begin{multline} \label{generator_v_a_b_premitive}
(\mathcal{L}-q) v_{a^*,b^*}(x)
= - \frac r {q+r} (\mathcal{L}-q) ( \overline{Z}^{(q)}(a^*-x)) +  {\beta \psi'(0+)}   \\ - \Big( \beta - \frac r {q+r}  \Big) \Big[ (\mathcal{L}-q)(\overline{Z}^{(q,r)}_{a^*-b^*}(a^*-x)) + \frac r q  \overline{Z}^{(q+r)}(b^*-a^*)  (\mathcal{L}-q)(Z^{(q)}(a^*-x))    \Big].
\end{multline}
Here, by e.g. the results in the proof of Theorem 2.1 in \cite{BKY}, for $x \neq a^*$,
\begin{align*}
(\mathcal{L}-q) (\overline{Z}^{(q)}(a^*-x)) = \psi'(0+) + q (x - a^*) 1_{\{ x > a^*\}}  \quad \textrm{and} \quad
(\mathcal{L}-q) (Z^{(q)}(a^*-x)) =  -q 1_{\{ x > a^*\}}.\end{align*}
In addition, for $x > a^*$, 
\begin{align*}
 (\mathcal{L}-q) (\overline{Z}^{(q,r)}_{a^*-b^*}(a^*-x)) = (\mathcal{L}-q) (\overline{Z}^{(q+r)}(b^*-x)) = \left\{ \begin{array}{ll}  q (x-b^*)  + \psi'(0+) & x \geq b^*, \\  r\overline{Z}^{(q+r)}(b^*-x) + \psi'(0+) & x < b^*,\end{array} \right.
\end{align*}
and, for $x < a^*$, by using the result in the proof of Lemma 4.5 of \cite{Yamazaki2013},
	\begin{align*}
		(\mathcal{L}-q)(\overline{Z}_{a^*-b^*}^{(q,r)}(a^*-x))&=(\mathcal{L}-q)\left(\overline{Z}^{(q+r)}(b^*-x)-r\int_0^{a^*-x}W^{(q)}(a^*-x-y)\overline{Z}^{(q+r)}(y+b^*-a^*) \diff y\right)\\
		&= r\overline{Z}^{(q+r)}(b^*-x) + \psi'(0+)-r\overline{Z}^{(q+r)}(b^*-x)
		=\psi'(0+).
	\end{align*}
	Substituting these in \eqref{generator_v_a_b_premitive}, we have the claim.
%
\end{proof}

By how $a^*$ and $b^*$ are chosen as in Lemma \ref{lemma_existence}, we have the following.
\begin{lemma} \label{lemma_slopes}
\begin{enumerate}
\item We have $v_{a^*,b^*}'(a^*) = 1$ when $a^* > 0$ and $v_{a^*,b^*}'(0+) \leq 1$ when $a^* = 0$.
\item  We have $v_{a^*,b^*}'(b^*) = \beta$.
\item The function $v_{a^*,b^*}$ is concave on $(0, \infty)$.
\end{enumerate}
\end{lemma}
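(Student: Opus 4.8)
\textbf{Proof proposal for Lemma~\ref{lemma_slopes}.}

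The plan is to extract each statement directly from the explicit form of $v_{a^*,b^*}$ in \eqref{value_function_final} together with the derivative formulas of Lemma~\ref{lemma_derivatives_Gamma_K} and the defining relations $\mathbf{C}_b$ and (when applicable) $\mathbf{C}_a$ from Lemma~\ref{lemma_existence}. For (1) and (2), since $\mathbf{C}_b$ holds we have $v_{a^*,b^*} = -\Gamma(a^*,b^*;\cdot)$ on $(0,b^*)$, so I would compute $v_{a^*,b^*}'(x) = -\partial_x \Gamma(a^*,b^*;x)$ using the first equality of \eqref{Gamma_derivatives}. Evaluating at $x = b^*$: the integral term vanishes and the scale-function arguments collapse, and using $Z^{(q+r)}(0)=1$, $W^{(q)}(0+)$ handled by Remark~\ref{remark_smoothness_zero}(2), and the relation $\beta - r/(q+r) = (\beta(q+r)-r)/(q+r)$, one gets $v_{a^*,b^*}'(b^*) = \beta$ after cancellation. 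Evaluating at $x = a^*$ (from the left, $x \uparrow a^*$): the integral term again vanishes, $Z^{(q)}(0)=Z^{(q+r)}(0)=1$, $\overline W^{(q)}(0)=0$, so $v_{a^*,b^*}'(a^*) = r/(q+r) + (\beta - r/(q+r)) = \beta + \ldots$; wait — more carefully, one obtains an expression of the form $\tfrac{r}{q+r} + (\beta - \tfrac{r}{q+r}) Z^{(q+r)}(b^*-a^*)$, and by $\mathbf{C}_a$ (which reads $Z^{(q+r)}(b^*-a^*) = q/(\beta(q+r)-r)$) this equals $\tfrac{r}{q+r} + \tfrac{q}{q+r} = 1$ when $a^* > 0$. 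When $a^* = 0$, Lemma~\ref{lemma_existence}(2) gives only the inequality $Z^{(q+r)}(b^*) \le q/(\beta(q+r)-r)$, and since $\beta - r/(q+r) > 0$ by Assumption~\ref{assumption_beta_r_q}, the same computation yields $v_{a^*,b^*}'(0+) \le 1$.

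For (3), concavity, I would split into the three regions $(0,a^*)$, $(a^*,b^*)$, $(b^*,\infty)$ and show $v_{a^*,b^*}'' \le 0$ on each, then invoke Lemma~\ref{lemma_smooth_fit_cond} (together with $\mathbf{C}_a$, or the continuity of $v'$ at $a^*$ when $a^*=0$) to patch across the boundaries. On $(b^*,\infty)$, $v_{a^*,b^*}$ is affine with slope $\beta$ by Theorem~\ref{theorem_vf}, so $v_{a^*,b^*}'' = 0$ there. On $(a^*, b^*)$, using the second equality of \eqref{Gamma_derivatives} and $\mathbf{C}_b$, the term involving $\Gamma(a^*,b^*)$ drops out and we get $v_{a^*,b^*}''(x) = -\partial_x^2\Gamma(a^*,b^*;x) = -\tfrac{qr}{q+r}W^{(q)}(a^*-x) + (\beta - \tfrac{r}{q+r})[-(q+r)W^{(q+r)}(b^*-x) + r Z^{(q+r)}(b^*-a^*)W^{(q)}(a^*-x) + \ldots]$; since $x > a^*$ the first term vanishes and the integral term vanishes, leaving $v_{a^*,b^*}''(x) = -(\beta - \tfrac{r}{q+r})(q+r)W^{(q+r)}(b^*-x) \le 0$, using $W^{(q+r)} \ge 0$ and $\beta > r/(q+r)$. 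On $(0,a^*)$ (only relevant if $a^* > 0$), similarly $v_{a^*,b^*}''(x) = -\tfrac{qr}{q+r}W^{(q)}(a^*-x) - (\beta - \tfrac{r}{q+r})[-(q+r)W^{(q+r)}(b^*-x) + r Z^{(q+r)}(b^*-a^*)W^{(q)}(a^*-x) + (q+r)r\int_0^{a^*-x}W^{(q)}(a^*-x-y)W^{(q+r)}(y+b^*-a^*)\,\diff y]$, and here I would use $\mathbf{C}_a$ to substitute $rZ^{(q+r)}(b^*-a^*) = qr/(\beta(q+r)-r) = \tfrac{qr}{q+r}\cdot\tfrac{q+r}{\beta(q+r)-r}$ so that the two $W^{(q)}(a^*-x)$ terms combine; the coefficient of $W^{(q)}(a^*-x)$ becomes $-\tfrac{qr}{q+r} - (\beta - \tfrac{r}{q+r})r\cdot\tfrac{q}{\beta(q+r)-r} = -\tfrac{qr}{q+r} - \tfrac{qr}{q+r} \cdot$ (something $> 0$), hence negative, and the remaining terms $+(\beta - \tfrac{r}{q+r})(q+r)W^{(q+r)}(b^*-x)$ minus the positive integral — this is where care is needed, and I would estimate $(q+r)W^{(q+r)}(b^*-x)$ against the integral, or alternatively argue via $(\mathcal L - q)v_{a^*,b^*} = 0$ on $(0,a^*)$ (Lemma~\ref{ver_1}) combined with the known sign on the boundary.

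The main obstacle is the concavity on $(0,a^*)$: the second derivative there is a difference of nonnegative quantities, so its sign is not manifest, and the argument must genuinely exploit $\mathbf{C}_a$ (the relation $Z^{(q+r)}(b^*-a^*) = q/(\beta(q+r)-r)$) to force the cancellation in the right direction. A cleaner route may be to differentiate the identity $(\mathcal L - q)v_{a^*,b^*}(x) = 0$ on $(0,a^*)$ from Lemma~\ref{ver_1} and combine with the boundary behavior of $v'$ and $v''$ at $a^*$ established in parts (1)–(2) and Lemma~\ref{lemma_smooth_fit_cond}, or to note that since $v_{a^*,b^*}''$ is continuous on $(0,b^*)$ and we already know $v_{a^*,b^*}''(x) = -(\beta - \tfrac r{q+r})(q+r)W^{(q+r)}(b^*-x) < 0$ for $x \in (a^*,b^*)$ and $v_{a^*,b^*}'(0+)\le 1 = v_{a^*,b^*}'(a^*)$ when $a^*>0$, the mean value theorem forces $v_{a^*,b^*}''$ to be nonpositive somewhere on $(0,a^*)$; to get it everywhere one would analyze the monotonicity of $W^{(q+r)}(b^*-x)$ versus the integral term directly. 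I expect the paper handles this by the substitution described above, reducing everything to sign checks on scale functions.
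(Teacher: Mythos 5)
Parts (1) and (2) of your proposal are correct and coincide with the paper's argument: evaluate $v_{a^*,b^*}'=-\partial_x\Gamma(a^*,b^*;\cdot)$ via the first line of \eqref{Gamma_derivatives} to get $v_{a^*,b^*}'(a^*)=\tfrac{r}{q+r}+\bigl(\beta-\tfrac{r}{q+r}\bigr)Z^{(q+r)}(b^*-a^*)$ and $v_{a^*,b^*}'(b^*)=\beta$, then use $\mathbf{C}_a$ (or the inequality in Lemma \ref{lemma_existence}(2) when $a^*=0$). Likewise your treatment of concavity on $(a^*,b^*)$ and $(b^*,\infty)$ matches the paper. The genuine gap is concavity on $(0,a^*)$, which you flag but do not close. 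Two concrete problems: first, your displayed formula for $v_{a^*,b^*}''$ there has the bracket's sign flipped --- since $v_{a^*,b^*}''=-\partial_x^2\Gamma(a^*,b^*;\cdot)$, the bracket in \eqref{Gamma_derivatives} enters with a \emph{plus} sign, so your subsequent bookkeeping (coefficient $-2qr/(q+r)$ on $W^{(q)}(a^*-x)$, a positive $W^{(q+r)}(b^*-x)$ term) is wrong. Second, the fallback routes you list do not work as stated: the mean value theorem only yields $v''\le 0$ at \emph{some} point of $(0,a^*)$, and differentiating $(\mathcal{L}-q)v=0$ does not by itself control the sign of $v''$.

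The paper closes this case without invoking $\mathbf{C}_a$ at all: it rewrites $\overline{Z}^{(q,r)}_{a^*-b^*}(a^*-x)$ via the third identity of Remark \ref{remark_Z_q_r_another_expression} and integration by parts, so that after two differentiations the term $rW^{(q)\prime}(a^*-x)\,\overline{Z}^{(q+r)}(b^*-a^*)$ cancels identically against the second derivative of $\tfrac{r}{q}Z^{(q)}(a^*-x)\overline{Z}^{(q+r)}(b^*-a^*)$ in \eqref{value_function_final}, leaving $v_{a^*,b^*}''(x)=-\tfrac{rq}{r+q}W^{(q)}(a^*-x)-\bigl(\beta-\tfrac{r}{q+r}\bigr)\bigl(qW^{(q)}(b^*-x)+r\int_0^{b^*-a^*}W^{(q)\prime}(b^*-x-u)Z^{(q+r)}(u)\,\diff u\bigr)$, which is manifestly nonpositive because $W^{(q)},W^{(q)\prime},Z^{(q+r)}\ge 0$. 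Your own route can in fact be salvaged: with the correct signs, $\mathbf{C}_a$ makes the two $W^{(q)}(a^*-x)$ terms cancel exactly, and the remainder equals $-\bigl(\beta-\tfrac{r}{q+r}\bigr)(q+r)W^{(q,r)}_{a^*-b^*}(a^*-x)$ by \eqref{W_a_def}, which is $\le 0$ because $W^{(q,r)}_{a^*-b^*}\ge 0$ by the first identity of Remark \ref{remark_Z_q_r_another_expression} --- but this identification is precisely the step your proposal leaves open.
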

\begin{proof}
(1) 
By differentiating \eqref{value_function_final},
$v_{a^*,b^*}'(x)
=
-\frac \partial {\partial x}\Gamma(a^*,b^*;x)$,
where, in particular,  equation \eqref{Gamma_derivatives} gives
\begin{align*}
v_{a^*,b^*}'(a^*)
 &= \frac r {q+r}  + \Big(\beta - \frac r {q+r}  \Big)  Z^{(q+r)}(b^*-a^*). 
 \end{align*}
For the case $a^* > 0$, because $\mathbf{C}_a$, given by \eqref{cond_2}, is satisfied by Lemma \ref{lemma_existence} (1),
\begin{align*}
v_{a^*,b^*}'(a^*) = \frac r {q+r} + \Big(\beta - \frac r {q+r}  \Big) \frac q {\beta(q+r)-r} = 1.
\end{align*}
For the case $a^* = 0$, by Lemma \ref{lemma_existence} (2),
\begin{align*}
v_{a^*,b^*}'(0+) 
&\leq \frac r {q+r} + \Big(\beta - \frac r {q+r}  \Big) \frac q {\beta(q+r)-r} = 1.
\end{align*}
(2) By \eqref{Gamma_derivatives}, it is clear that $v_{a^*,b^*}'(x) = \beta$ for all $x\geq b^*$.\\
(3) (i) For $x < a^*$, integration by parts  applied to the third equality of Remark \ref{remark_Z_q_r_another_expression} gives
\begin{align*}
		\overline{Z}_{a^*-b^*}^{(q,r)}(a^*-x) 
		&=\overline{Z}^{(q)}(b^*-x) - r \overline{W}^{(q)} (a^*-x) \overline{Z}^{(q+r)}(b^*-a^*) +r\int_0^{b^*-a^*}\overline{W}^{(q)}(b^*-x-u)Z^{(q+r)}(u) \diff u.
\end{align*}
By differentiating this,
\begin{align*}
		\frac \partial {\partial x}\overline{Z}_{a^*-b^*}^{(q,r)}(a^*-x) 
		&=-Z^{(q)}(b^*-x)+ r W^{(q)} (a^*-x) \overline{Z}^{(q+r)}(b^*-a^*) \\&-r\int_0^{b^*-a^*} W^{(q)}(b^*-x-u)Z^{(q+r)}(u) \diff u, \\
				\frac {\partial^2} {\partial x^2}\overline{Z}_{a^*-b^*}^{(q,r)}((a^*-x)+) 
		&=q W^{(q)}(b^*-x)- r W^{(q) \prime} ((a^*-x)+) \overline{Z}^{(q+r)}(b^*-a^*) \\&+r\int_0^{b^*-a^*} W^{(q)\prime}(b^*-x-u)Z^{(q+r)}(u) \diff u.
\end{align*}
Hence, by \eqref{value_function_final} and Assumption \ref{assumption_beta_r_q},
\begin{align*}
		v_{a^*,b^*}''(x)&=-\frac{rq}{r+q}W^{(q)}(a^*-x)\\&-\Big( \beta - \frac r {q+r} \Big)\left(qW^{(q)}(b^*-x)+r\int_0^{b^*-a^*}W^{(q)\prime}(b^*-x-u)Z^{(q+r)}(u) \diff u\right) \leq 0.
	\end{align*}
	(ii)  For $x > a^*$, by \eqref{Gamma_derivatives} and Assumption \ref{assumption_beta_r_q},
	\begin{align*} 
v_{a^*,b^*}''(x) = -\frac {\partial ^2} {\partial x^2}\Gamma(a^*,b^*;x) 
 &= - \Big(\beta - \frac r {q+r}  \Big)  (q+r)W^{(q+r)}(b^*-x) \leq 0.
\end{align*}
By (i) and (ii), together with the continuity of $v_{a^*, b^*}'$  as in Lemma \ref{lemma_smooth_fit_cond}, $v_{a^*, b^*}$ is concave. 
\end{proof}

\begin{lemma}\label{ver_3} For $x \geq 0$, we have
\begin{align*}
m(x) &:= \max_{0\leq l\leq x} \{ l+v_{a^*,b^*}(x-l)-v_{a^*,b^*}(x) \} \\&=
	\left[\frac q {q+r}(x-a^*) - \Big(\beta - \frac r {q+r} \Big) \left( \overline{Z}^{(q+r)}(b^*-a^*) - \overline{Z}^{(q+r)}(b^*-x)\right)\right]1_{\{x>a^*\}}.
\end{align*}
\end{lemma}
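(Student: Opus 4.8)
The plan is to analyze the concave function $l \mapsto l + v_{a^*,b^*}(x-l) - v_{a^*,b^*}(x)$ on $[0,x]$ and locate its maximizer. By Lemma \ref{lemma_slopes}(3), $v_{a^*,b^*}$ is concave on $(0,\infty)$, so $l \mapsto v_{a^*,b^*}(x-l)$ is convex, and hence $\phi(l) := l + v_{a^*,b^*}(x-l)$ has derivative $1 - v_{a^*,b^*}'(x-l)$, which is non-decreasing in $l$; therefore $\phi$ is convex and the maximum over $[0,x]$ is attained at one of the endpoints, i.e. at $l=0$ (giving value $0$) or at $l=x$ (giving $x + v_{a^*,b^*}(0) - v_{a^*,b^*}(x) = x - v_{a^*,b^*}(x)$, using $v_{a^*,b^*}(0) = 0$). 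Actually, it is cleaner to use the first-order condition: since $v_{a^*,b^*}'$ is non-increasing and, by Lemma \ref{lemma_slopes}(1), $v_{a^*,b^*}'(a^*) = 1$ when $a^* > 0$ (resp.\ $v_{a^*,b^*}'(0+) \le 1$ when $a^*=0$), the derivative $1 - v_{a^*,b^*}'(x-l)$ is $\le 0$ for $x - l \ge a^*$ and $\ge 0$ for $x-l \le a^*$. Hence, when $x > a^*$, the optimal choice is $l^* = x - a^*$, pushing the post-dividend surplus down exactly to $a^*$; when $x \le a^*$, the derivative is $\le 0$ throughout $[0,x]$ so $l^* = 0$ and $m(x) = 0$. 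This confirms the indicator $1_{\{x > a^*\}}$.

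Next, for $x > a^*$ I would substitute $l^* = x - a^*$ to get
\begin{align*}
m(x) = (x - a^*) + v_{a^*,b^*}(a^*) - v_{a^*,b^*}(x) = (x-a^*) - \bigl(\Gamma(a^*,b^*;a^*) - \Gamma(a^*,b^*;x)\bigr),
\end{align*}
using the representation $v_{a^*,b^*}(\cdot) = -\Gamma(a^*,b^*;\cdot)$ from \eqref{value_function_final}. Then it remains a direct computation with the explicit formula for $\Gamma(a,b;x)$: at $x = a^*$ the arguments $a^*-x$ vanish, so $\overline{Z}^{(q)}(0) = 0$, $Z^{(q)}(0) = 1$, $\overline{Z}^{(q,r)}_{a^*-b^*}(0) = \overline{Z}^{(q+r)}(b^*-a^*)$ (by the definition \eqref{W_a_def} with zero integration range, or Remark \ref{remark_Z_q_r_another_expression}), so that
\begin{align*}
\Gamma(a^*,b^*;a^*) = \frac{\beta\psi'(0+)}{q} + \Bigl(\beta - \frac r{q+r}\Bigr)\Bigl[\overline{Z}^{(q+r)}(b^*-a^*) + \frac r q \overline{Z}^{(q+r)}(b^*-a^*)\Bigr] = \frac{\beta\psi'(0+)}{q} + \Bigl(\beta - \frac r{q+r}\Bigr)\frac{q+r}{q}\overline{Z}^{(q+r)}(b^*-a^*).
\end{align*}
Subtracting $\Gamma(a^*,b^*;x)$ and collecting terms should produce exactly $\frac q{q+r}(x-a^*) - (\beta - \frac r{q+r})(\overline{Z}^{(q+r)}(b^*-a^*) - \overline{Z}^{(q+r)}(b^*-x))$ after the $\frac{\beta\psi'(0+)}{q}$ terms cancel and the $\overline{Z}^{(q,r)}_{a^*-b^*}(a^*-x)$ term is identified (for $x \le b^*$ this equals $\overline{Z}^{(q+r)}(b^*-x)$ since the convolution term has the $W^{(q+r)}$ argument on the appropriate side — or more simply one uses that $\overline{Z}^{(q,r)}_{a^*-b^*}(a^*-x) = \overline{Z}^{(q+r)}(b^*-x)$ whenever $x \ge a^*$, which follows from \eqref{W_a_def} because then $a^*-x \le 0$ and $W^{(q)}$ vanishes on the negative half-line). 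One should also double-check the degenerate term $\frac r q Z^{(q)}(a^*-x) = \frac r q$ for $x \ge a^*$.

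The main obstacle, such as it is, is organizational rather than conceptual: one must be careful about the regime $x > b^*$ versus $a^* < x \le b^*$ when simplifying $\overline{Z}^{(q,r)}_{a^*-b^*}(a^*-x)$ and $\overline{Z}^{(q+r)}(b^*-x)$, since $b^*-x$ can be negative, and about invoking concavity to justify that the constrained maximum is attained at the unconstrained stationary point $x - a^*$ (which does lie in $[0,x]$ precisely because $0 \le a^* < x$). One subtlety worth stating explicitly: when $a^* = 0$, Lemma \ref{lemma_slopes}(1) only gives $v_{a^*,b^*}'(0+) \le 1$, so the derivative $1 - v_{a^*,b^*}'(x-l)$ at $l = x$ (i.e.\ at post-dividend level $0$) is $\ge 0$, confirming $l^* = x$ remains optimal there, consistent with $l^* = x - a^* = x$; this is the only place the inequality (rather than equality) form of $\mathbf{C}_a$ matters, and it causes no problem. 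Everything else is bookkeeping with the scale-function identities already collected in Section \ref{section_computation_NPV}.
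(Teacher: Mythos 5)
Your proposal follows essentially the same route as the paper: use the concavity of $v_{a^*,b^*}$ and the slope condition $v_{a^*,b^*}'(a^*)=1$ (resp.\ $v_{a^*,b^*}'(0+)\leq 1$) from Lemma \ref{lemma_slopes} to conclude that the maximizer is $l^*=(x-a^*)\vee 0$, and then substitute into the explicit formula \eqref{value_function_final}; your scale-function bookkeeping (in particular $\overline{Z}^{(q,r)}_{a^*-b^*}(a^*-x)=\overline{Z}^{(q+r)}(b^*-x)$ and $Z^{(q)}(a^*-x)=1$ for $x\geq a^*$) is correct and yields the stated expression. One caveat: your justification of $l^*=x-a^*$ has the signs reversed --- since $v_{a^*,b^*}$ is concave, $l\mapsto l+v_{a^*,b^*}(x-l)$ is concave (not convex), and its derivative $1-v_{a^*,b^*}'(x-l)$ is $\geq 0$ when $x-l\geq a^*$ and $\leq 0$ when $x-l\leq a^*$, which is what actually forces the interior maximum at $l=x-a^*$; the conclusion you draw is the correct one, but as written the stated inequalities would place the maximum at an endpoint and also contradict your own treatment of the case $x\leq a^*$.
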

\begin{proof}
By the slope condition and the concavity of $v_{a^*, b^*}$ as in Lemma \ref{lemma_slopes} (see also Lemma 3.1 in \cite{ATW}),
	\begin{equation}\label{max_cond}
	m(x)=
	\begin{cases} 0 &\mbox{if } x \in[0,a^*], \\ 
	x-a^*+v_{a,^*b^*}(a^*)-v_{a^*,b^*}(x) & \mbox{if } x \in (a^*,\infty). \end{cases}
	\end{equation}
Hence it is left to show for the case  $x>a^*$.  Indeed, by \eqref{value_function_final}, 
\begin{align*}
	v_{a^*,b^*}(a^*) - v_{a^*,b^*}(x)&=  -\frac r {q+r} (x-a^*)  - \Big(\beta - \frac r {q+r}  \Big)  \left(\overline{Z}^{(q+r)} (b^*-a^*)- \overline{Z}^{(q+r)}(b^*-x)\right),
\end{align*}
and hence
\begin{align*}
	m(x) = \frac q {q+r}(x-a^*) - \Big(\beta - \frac r {q+r}  \Big) \left( \overline{Z}^{(q+r)}(b^*-a^*) - \overline{Z}^{(q+r)}(b^*-x)\right).
\end{align*}
\end{proof}

By Lemmas  \ref{ver_1}, \ref{lemma_slopes}, and \ref{ver_3}, the main result is immediate.
\begin{theorem}
The function $v_{a^*, b^*}$ solves \eqref{HJB-inequality_db} and \eqref{HJB-inequality_db2}.  Hence $\pi_{a^*, b^*}$ is the optimal strategy.
\end{theorem}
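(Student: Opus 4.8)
The plan is to verify the two sufficient conditions of Lemma \ref{verificationlemma_2} for the candidate function $w := v_{a^*,b^*}$, namely the HJB inequality \eqref{HJB-inequality_db} and the slope condition \eqref{HJB-inequality_db2}, using the three lemmas just established. The smoothness requirement is already handled: by Lemma \ref{lemma_smooth_fit_cond}(1), since $\mathbf{C}_b = \Gamma(a^*,b^*) = 0$ holds by Lemma \ref{lemma_existence}, the function $v_{a^*,b^*}$ is $C^1$ on $(0,\infty)$ in the bounded-variation case and $C^2$ in the unbounded-variation case, which is exactly ``sufficiently smooth.''

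For the slope condition \eqref{HJB-inequality_db2}, I would use Lemma \ref{lemma_slopes}: parts (1) and (2) give $v_{a^*,b^*}'(a^*) \le 1$ (with equality when $a^*>0$) and $v_{a^*,b^*}'(b^*) = \beta$, and part (3) says $v_{a^*,b^*}$ is concave on $(0,\infty)$. Concavity forces $v_{a^*,b^*}'$ to be nonincreasing, so for $x \le a^*$ we have $v_{a^*,b^*}'(x) \ge v_{a^*,b^*}'(a^*)$, while for $x \ge b^*$ we have $v_{a^*,b^*}'(x) = \beta$ (it is affine there by Lemma \ref{lemma_slopes}(2)). The one point that needs a short argument is that $v_{a^*,b^*}'(x) \ge \beta$ on the whole half-line; but since $v'$ is nonincreasing and equals $\beta$ at $b^*$ and $\beta$ for all $x\ge b^*$, and since $\beta < 1$ by Assumption \ref{assumption_beta_r_q} while $v'(a^*)\le 1$, I should check that $v'$ does not dip below $\beta$ on $(a^*, b^*)$ — this follows because $v'$ is nonincreasing there and $v'(b^*-) = v'(b^*) = \beta$, so $v'(x) \ge \beta$ for $x \in (a^*, b^*)$; combined with $v'(x) \ge v'(a^*) \ge \beta$ for $x \le a^*$ (using $v'(a^*) = \frac{r}{q+r} + (\beta - \frac{r}{q+r})Z^{(q+r)}(b^*-a^*) \ge \beta$ since $Z^{(q+r)} \ge 1$ and $\beta > \frac{r}{q+r}$), this yields \eqref{HJB-inequality_db2} everywhere.

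For the HJB inequality \eqref{HJB-inequality_db}, I would substitute the expression from Lemma \ref{ver_3} for $m(x) = \max_{0 \le l \le x}\{l + v_{a^*,b^*}(x-l) - v_{a^*,b^*}(x)\}$ and the expressions for $(\mathcal{L}-q)v_{a^*,b^*}(x)$ from Lemma \ref{ver_1}, and check that $(\mathcal{L}-q)v_{a^*,b^*}(x) + r\, m(x) \le 0$ on each of the three regions $(0,a^*]$, $(a^*,b^*)$, and $[b^*,\infty)$ separately. On $(0,a^*]$ both terms vanish, giving equality. On $(a^*,b^*)$, adding $r\,m(x)$ from Lemma \ref{ver_3} to $(\mathcal{L}-q)v_{a^*,b^*}(x)$ from Lemma \ref{ver_1}: the $\overline{Z}^{(q+r)}(b^*-x)$ and $\overline{Z}^{(q+r)}(b^*-a^*)$ terms, and the $\frac{q}{q+r}(x-a^*)$ terms, should cancel exactly, leaving $0$. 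On $[b^*,\infty)$, the same cancellation should leave $0$ as well. So in fact the HJB inequality holds with equality throughout, reflecting that $\pi_{a^*,b^*}$ is a genuine optimizer rather than merely admissible.

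The main obstacle is essentially bookkeeping: making sure the cancellations in the HJB verification are carried out correctly, since the expressions in Lemmas \ref{ver_1} and \ref{ver_3} must match term-by-term, and making sure the slope condition argument on $(a^*,b^*)$ correctly invokes concavity together with the boundary values at $a^*$ and $b^*$. There is no deep difficulty — the heavy lifting was done in constructing $(a^*,b^*)$ via the smooth-fit equations $\mathbf{C}_b$ and $\mathbf{C}_a$ (Section \ref{section_smooth_fit}) and in the preliminary computations of Lemmas \ref{ver_1}, \ref{lemma_slopes}, and \ref{ver_3} — so the proof is short, consisting of assembling these pieces and then invoking Lemma \ref{verificationlemma_2} to conclude $v_{a^*,b^*} = v$ and that $\pi_{a^*,b^*}$ is optimal.
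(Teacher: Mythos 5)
Your proposal follows exactly the paper's route: assemble Lemmas \ref{ver_1}, \ref{lemma_slopes}, and \ref{ver_3}, check the two variational inequalities region by region, and invoke the verification lemma (Lemma \ref{verificationlemma_2}); the treatment of the slope condition \eqref{HJB-inequality_db2} via concavity and the boundary value $v_{a^*,b^*}'(b^*)=\beta$ is correct (and your extra check that $v'(a^*)\ge\beta$ is redundant given that $v'$ is nonincreasing and equals $\beta$ at $b^*$).

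There is, however, one concrete error in your description of the HJB computation: the cancellation on $[b^*,\infty)$ does \emph{not} leave $0$. Using Lemma \ref{ver_3} with $\overline{Z}^{(q+r)}(b^*-x)=b^*-x$ for $x\ge b^*$ and adding $r\,m(x)$ to the expression from Lemma \ref{ver_1}, the terms $\frac{qr}{q+r}(a^*-x)$ and the $\overline{Z}^{(q+r)}(b^*-a^*)$ terms cancel, but one is left with $(\mathcal{L}-q)v_{a^*,b^*}(x)+r\,m(x)=\bigl(\beta-\tfrac{r}{q+r}\bigr)(q+r)(b^*-x)$, which is strictly negative for $x>b^*$. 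The inequality \eqref{HJB-inequality_db} therefore still holds on that region, but only because Assumption \ref{assumption_beta_r_q} gives $\beta>r/(q+r)$; this sign argument is a genuine (if small) step that your claim of ``equality throughout'' skips over. Your heuristic that equality everywhere ``reflects that $\pi_{a^*,b^*}$ is a genuine optimizer'' is also off: on the classical-reflection region $[b^*,\infty)$ the binding condition is the gradient constraint $v'=\beta$, not the generator equation, so one should expect strict inequality in \eqref{HJB-inequality_db} there. Equality in \eqref{HJB-inequality_db} does hold on all of $(0,b^*)$, as you state.
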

\begin{proof}

	
By Lemmas \ref{ver_1} and \ref{ver_3} and Assumption \ref{assumption_beta_r_q},
	\begin{align*}
	 (\mathcal{L} - q)v_{a^*, b^*}(x)+r m(x)=  \left\{ \begin{array}{ll}\big(\beta - \frac r {q+r}  \big) (q+r)  (b^*-x) &  \textrm{if } x \geq b^* \\
	0 & \textrm{if }  0 < x < b^* 
	\end{array}\right\} \leq 0,
	\end{align*}
	and hence $v_{a^*, b^*}$ solves \eqref{HJB-inequality_db} .  On the other hand, it solves  \eqref{HJB-inequality_db2} by Lemma \ref{lemma_slopes}.  Hence, by Lemma \ref{verificationlemma_2}, the optimality holds.

\end{proof}
\section{Optimality of pure periodic and continuous strategies} \label{section_other_cases}
In this section, we consider the case Assumption \ref{assumption_beta_r_q} does not hold.  We show when $0\leq \beta\leq {r} / {(r+q)}$ that a pure periodic barrier strategy as defined in Section \ref{subsection_various_strategies} is optimal; when $\beta \geq 1$, a pure continuous barrier strategy is optimal.
\subsection{Optimality of the pure periodic barrier strategy
} 
Here we assume
$0\leq \beta\leq r / (r+q)$, 
and show that it is optimal not to activate the continuous strategy ($b^*=\infty$). 

Let $\mathcal{A}_p \subset \mathcal{A}$ be the set of strategies such that $L^\pi_c \equiv 0$ uniformly.
P\'erez and Yamazaki \cite{YP2016} studied the optimization problem over the restricted set $\mathcal{A}_p$.  By Section 4.3 in \cite{YP2016}, we have 
\begin{align*}
\sup_{\pi \in \mathcal{A}_p}v_{\pi}(x) = v_{a^*_p,\infty}(x), \quad x > 0, 
\end{align*}
where $v_{a^*_p,\infty}$ is the value function under the periodic barrier strategy with the barrier $a^*_p \geq 0$, which is given as follows: 
\begin{enumerate}
\item For the case $\psi'(0+)<-{q(r+q)} /{(r\Phi(q+r))}$, we have
\begin{align} \label{value_periodic_a_bigger_0}
\begin{split}
	v_{a^*_p,\infty}(x) &=-\frac{r}{r+q}\left(\overline{Z}^{(q)}(a^*_p-x)+\frac{\psi'(0 +)}{q}\right)\\&-\frac{1}{\Phi(q+r)}\left(\frac{r}{r+q}Z^{(q)}(a^*_p-x)+\frac{q}{r+q} Z^{(q)}(a^*_p-x, \Phi(q+r))  \right), \quad x \geq 0,
	\end{split}
\end{align}
where $a^*_p$ is the unique root of
\begin{equation*}
-\Phi(q+r) \frac{r}{r+q}\left(\overline{Z}^{(q)}(a)+\frac{\psi'(0+)}{q}\right)= \frac{r}{r+q} Z^{(q)}(a)
		+\frac{q}{r+q} Z^{(q)}(a, \Phi(q+r)).
\end{equation*}
\item If $\psi'(0+) \geq -{q(r+q)} /{(r\Phi(q+r))}$, then $a^*_p = 0$ and
\begin{align} \label{value_periodic_a_0}
	v_{0,\infty}(x)=\frac{r}{r+q}\left[x-\frac{\psi'(0+)}{r+q}\left(1-{\rm e}^{-\Phi(q+r)x}\right)\right], \quad x \geq 0.
\end{align}
\end{enumerate}

Now, in order to show that it is indeed optimal over the unrestricted set $\mathcal{A}$, it suffices to show that $v_{a^*_p, \infty}$ solves the  variational inequalities 
\eqref{HJB-inequality_db} and \eqref{HJB-inequality_db2}. By Theorem 4.1 and Lemma 4.3 in \cite{YP2016}, the inequality \eqref{HJB-inequality_db} is already known to hold, and hence it remains to show \eqref{HJB-inequality_db2}.

By differentiating \eqref{value_periodic_a_bigger_0} and \eqref{value_periodic_a_0} and taking limits,
$\lim_{x\to\infty}v_{a^*_p,\infty}'(x)= {r} /{(r+q)} \geq\beta$.
In addition, the concavity of $v_{a^*_p, \infty}$ holds when $a^*_p > 0$ by Lemma 4.4 (1) of \cite{YP2016}; it also holds when $a^*_p = 0$ by \eqref{value_periodic_a_0} and our assumption \eqref{assumption_drift}. Hence, for $x > 0$, we conclude that
$v'_{a^*_p,\infty}(x)\geq r/(r+q)\geq \beta$,
as desired.
\subsection{Optimality of the pure continuous barrier strategy}
We now assume that
$\beta\geq1$,
and show that it is optimal not to exercise 
the periodic strategy (i.e. $a^*=b^*$).

Let $\mathcal{A}_c \subset \mathcal{A}$ be the set of strategies such that $L^\pi_p \equiv 0$ uniformly.
Bayraktar et al.\ \cite{BKY} studied the optimization problem over the restricted set $\mathcal{A}_c$.  By  equation (2.5) in \cite{BKY} (with normalization), we have 
\begin{align}
\sup_{\pi \in \mathcal{A}_c}v_{\pi}(x) = v_{b^*_c,b^*_c}(x) = -\beta\left(\overline{Z}^{(q)}(b^*_c-x)+\frac{\psi'(0+)}{q}\right), \quad x > 0,  \label{value_function_continuous}
\end{align}
where
\begin{align}
b^*_c:=(\overline{Z}^{(q)})^{-1}(-\psi'(0+)/q). \label{b_star_classical}
\end{align}


Again, we shall show that $v_{b^*_c, b^*_c}$ solves the variational inequalities \eqref{HJB-inequality_db} and \eqref{HJB-inequality_db2}.

Because for $0 \leq l \leq x$,
\begin{align*}
\frac \partial {\partial l} \big[ l+v_{b^*_c,b^*_c}(x-l) \big] = \frac \partial {\partial l} \Big[ l-\beta\left(\overline{Z}^{(q)}(b^*_c-x+l)+\frac{\psi'(0+)}{q}\right) \Big] 
=1-\beta Z^{(q)}(b^*_c-x+l)\leq 1-\beta\leq0,
\end{align*}
we have
$
\max_{0\leq l\leq x} \{ l+v_{b^*_c,b^*_c}(x-l)-v_{b^*_c,b^*_c}(x) \}=0$ for $x > 0.
$
This together with $(\mathcal{L}-q)v_{b^*_c,b^*_c}(x)\leq 0$ (by Theorem 2.1 in \cite{BKY}) shows \eqref{HJB-inequality_db}.

On the other hand, \eqref{HJB-inequality_db2} holds because $v_{b^*_c,b^*_c}'(x)=\beta Z^{(q)}(b^*_c-x)\geq \beta$ for all $x > 0$.

\section{Numerical Results} \label{section_numerics}

%

In this section, we confirm the analytical results obtained in the previous sections through a sequence of numerical experiments.  Here, we assume that the underlying process $X$ is a spectrally positive version of the \emph{phase-type} \lev process (with Brownian motion) as discussed in \cite{Asmussen_2004}. Its scale function can be expressed analytically as discussed in \cite{Egami_Yamazaki_2010_2}, and this process is particularly important because it can approximate any  spectrally positive \lev process (see \cite{Asmussen_2004} and \cite{Egami_Yamazaki_2010_2}).  

We assume that 
\begin{equation}
 X(t) - X(0)= - c t+\sigma B(t) + \sum_{n=1}^{N(t)} Z_n, \quad 0\le t <\infty, \label{X_phase_type}
\end{equation}
where, $c \in \R$, $\sigma > 0$,  $B=( B(t); t\ge 0)$ is standard Brownian motion, and $N=(N(t); t\ge 0 )$ is a Poisson process with arrival rate $\kappa$. In addition,  $Z = ( Z_n; n = 1,2,\ldots )$ is an i.i.d.\ sequence of phase-type-distributed random variables with representation $(m,{\bm \alpha},{\bm T})$, or equivalently the first absorption time in a continuous-time Markov chain consisting of a single absorbing state and $m$ transient states with initial distribution $\alpha$ and transition matrix ${\bm T}$ (see \cite{Asmussen_2004} for details). The processes $B$, $N$, and $Z$ are assumed to be mutually independent.  We refer the reader to \cite{Egami_Yamazaki_2010_2, KKR} for the forms of the corresponding scale functions.

\subsection{Computing $(a^*, b^*)$ and the value function $v_{a^*,b^*}$}  For the process $X$ in \eqref{X_phase_type}, we set $\sigma = 0.2$ and $\kappa = 2$, and, for $Z$, we use a phase-type distribution with  $m=6$ 
that approximates a (folded) normal random variable with mean $0$ and variance $1$ (see \cite{leung2015analytic} for the values of $\alpha$ and ${\bm T}$).  For the drift parameter $c$, we consider \textbf{Case 1:} $c=1$  and \textbf{Case 2:} $c=1.5$,
 obtaining the cases $a^* > 0$ and $a^* = 0$, respectively.  For the other parameters, let $q = 0.05$, $r = 0.05$, and $\beta = 0.6$ so that Assumption \ref{assumption_beta_r_q} is satisfied.

The first implementation step is computing the optimal barriers $(a^*, b^*)$ as described in Lemma \ref{lemma_existence}.  
As discussed in Section \ref{subsection_existence}, we choose these such that $\underline{\Gamma}(b^*) = 0$ and $a^* = \arg \min_{0 \leq a \leq b^*} \Gamma(a,b^*)$.   Thanks to Lemma \ref{lemma_Gama_wrt_a} (i.e., the fact that $a \mapsto \gamma(a,b) := \partial \Gamma(a,b) / \partial a$ is monotone), the minimizer $a(b)$ of $a \mapsto \Gamma(a,b)$ can be obtained by the bisection method.  In addition, using Lemma 
\ref{Gamma_inf_increasing}, another bisection can be used to obtain $b^*$ (and $a^* = a(b^*)$).  With these $a^*$ and $b^*$ values, the value function becomes $v_{a^*, b^*}$, as in \eqref{value_function_final}.

In the first two rows of Figure \ref{plot_gamma}, we plot, for \textbf{Cases 1} and \textbf{2}, the mappings $a \mapsto \Gamma(a,b)$ and $a \mapsto \gamma(a,b)$ for various values of $b$.  As in \eqref{Gamma_derivative_b_uniform_bound}, $b \mapsto \Gamma(a,b)$ is uniformly increasing. 
The values $a(b)$ correspond to the points (indicated by the down-pointing triangles) at which $\Gamma(\cdot, b)$ is minimized and $\gamma(\cdot, b)$ vanishes (if $a(b) > 0$). 
The optimal pair is such that the minimum $\underline{\Gamma}$ is zero.
In Figure \ref{plot_value_function},  we plot the corresponding value functions $v_{a^*, b^*}$ (solid lines) along with the suboptimal NPVs $v_{a,b}$ (dotted lines) given in  \eqref{vf_ff} where $(a,b) \neq (a^*, b^*)$.  It can be confirmed in both cases that $v_{a^*, b^*}$ dominates $v_{a,b}$,  for $(a,b) \neq (a^*, b^*)$, uniformly in $x$.
%
%
 \begin{figure}[htbp]
\begin{center}
\begin{minipage}{1.0\textwidth}
\centering
\begin{tabular}{cc}
 \includegraphics[scale=0.4]{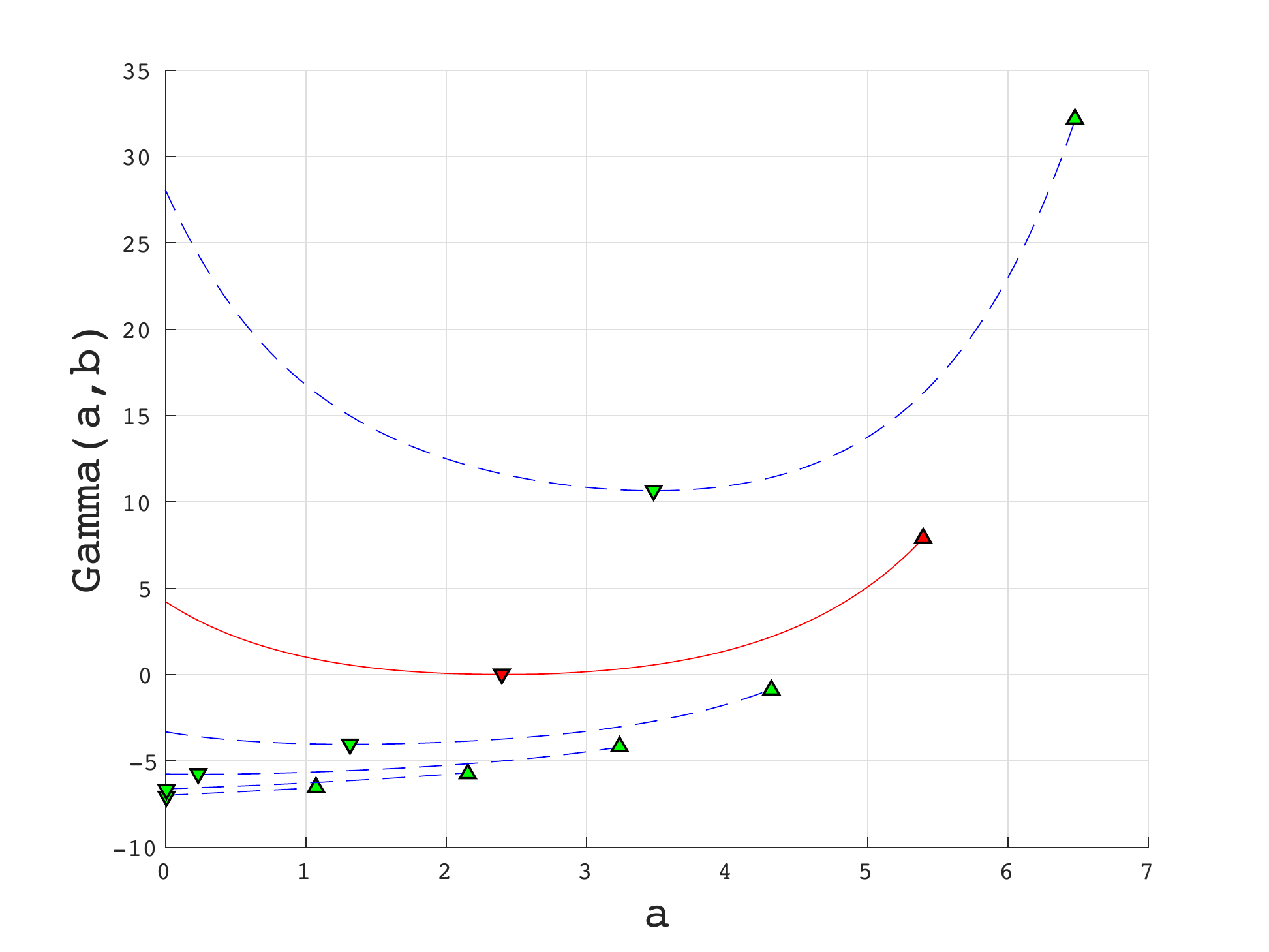} & \includegraphics[scale=0.4]{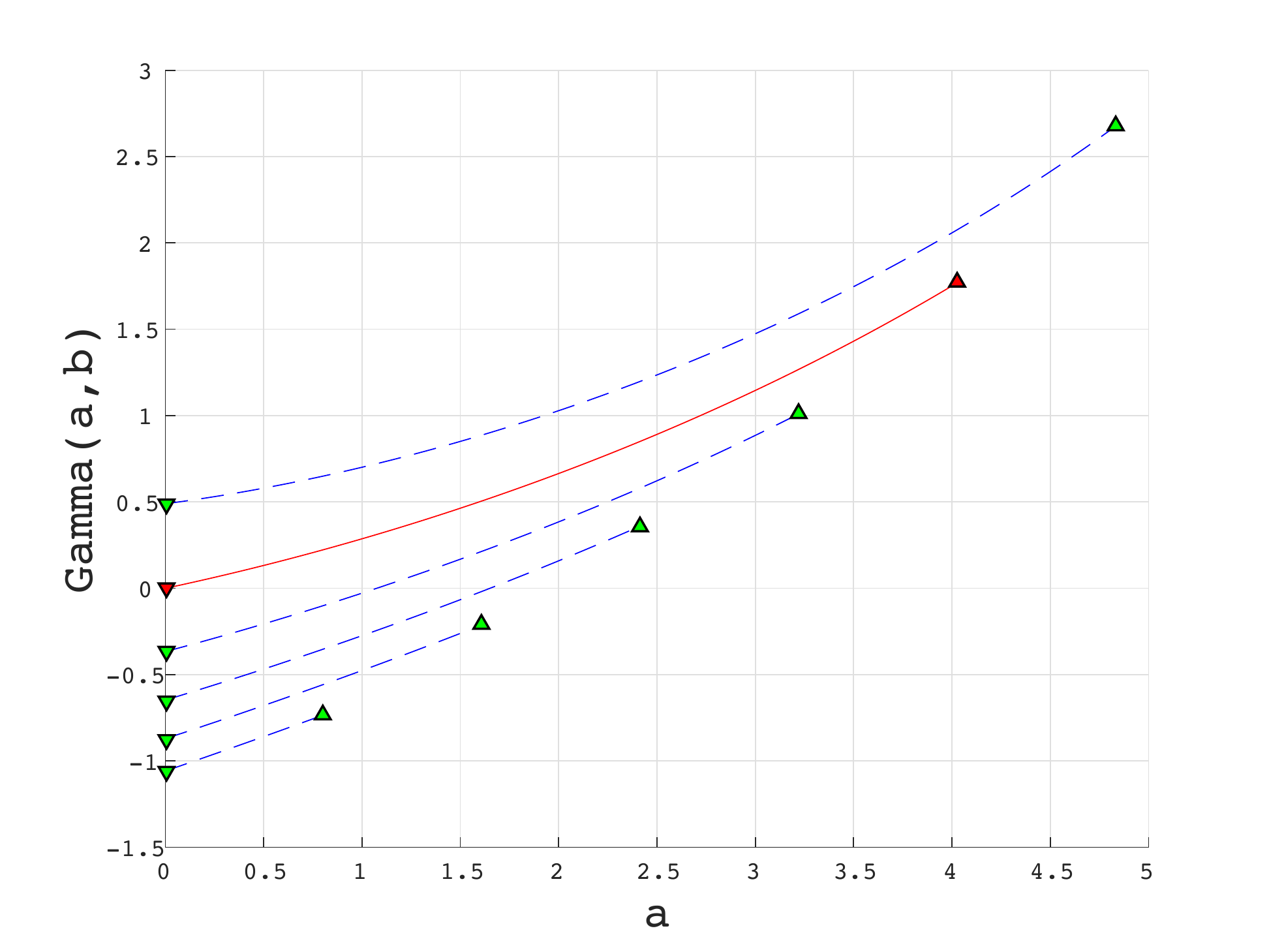}  \\
   \includegraphics[scale=0.4]{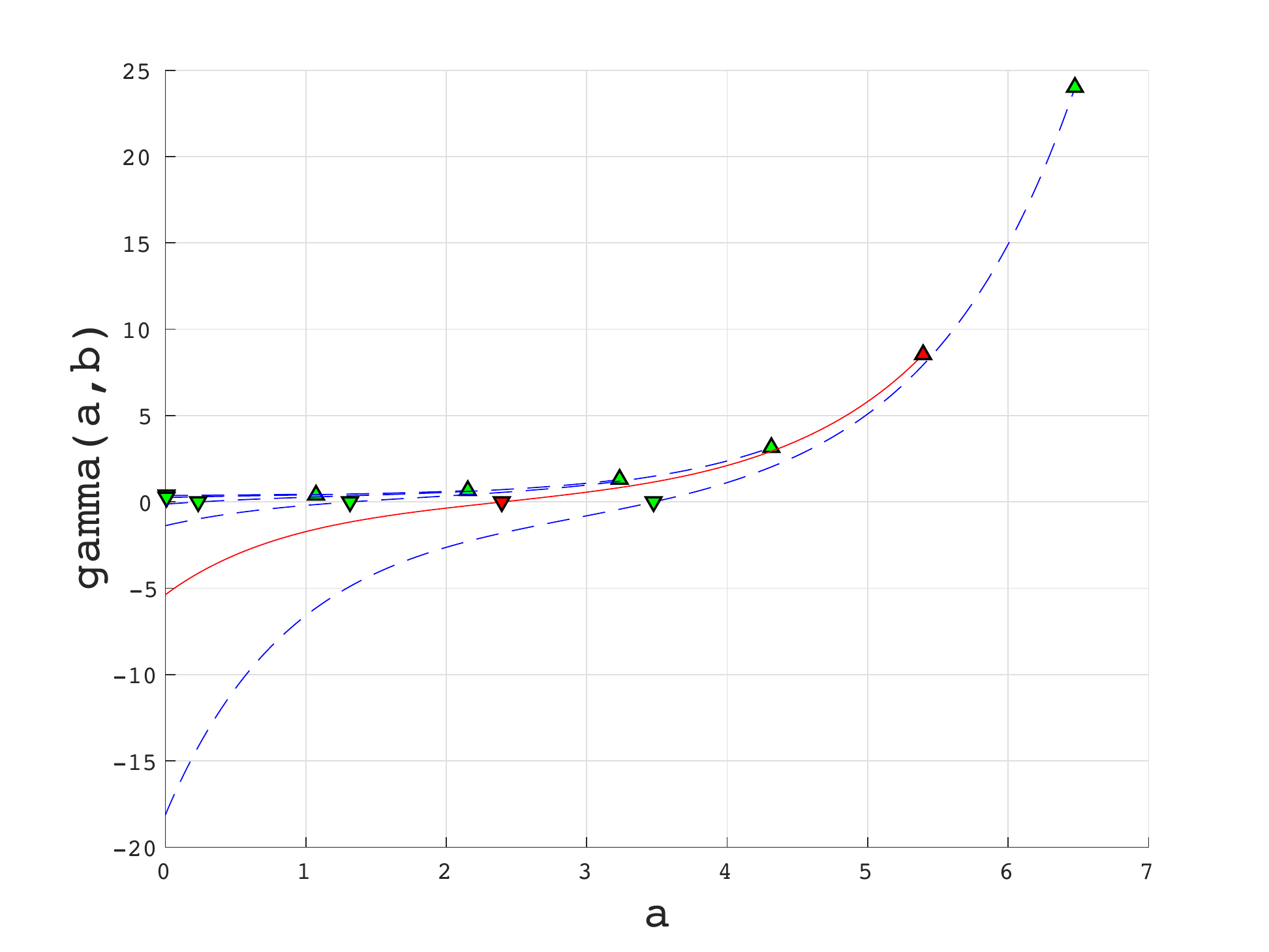} & \includegraphics[scale=0.4]{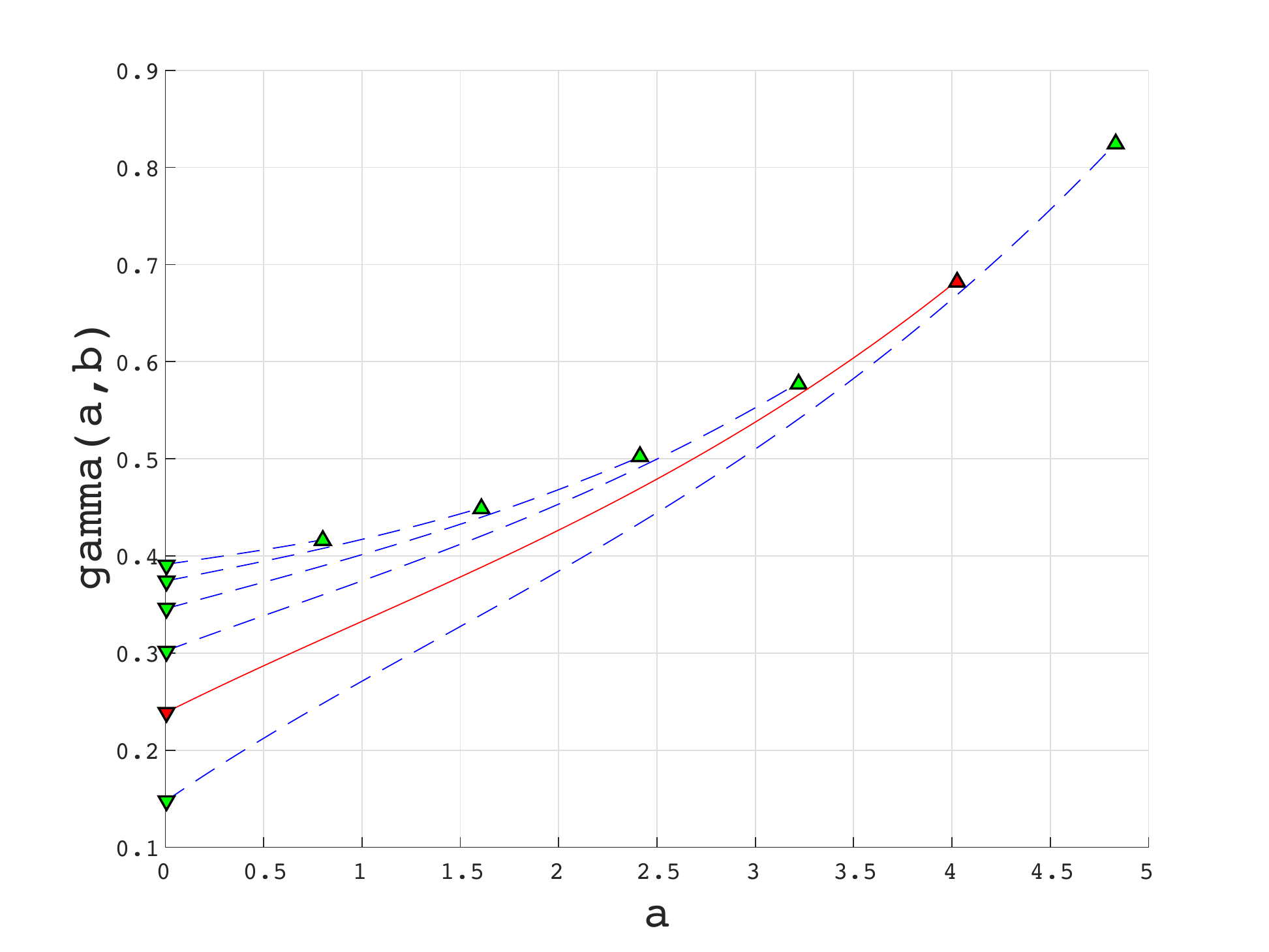}    \\  
 Case 1 & Case 2 \end{tabular}
\end{minipage}
\caption{Plots of $a \mapsto \Gamma(a, b)$ (top) and $a \mapsto \gamma(a, b)$ (bottom)  for \textbf{Case 1} (left) and \textbf{Case 2} (right), for $b = b^*/5, 2b^*/5, \ldots, b^*, 6b^*/5$. The solid red lines correspond to the cases where $b = b^*$.    The down- and up-pointing triangles indicate the points at $a = a(b)$ and $a=b$, respectively.  } \label{plot_gamma}
\end{center}
\end{figure}

\begin{figure}[htbp]
\begin{center}
\begin{minipage}{1.0\textwidth}
\centering
\begin{tabular}{cc}
 \includegraphics[scale=0.4]{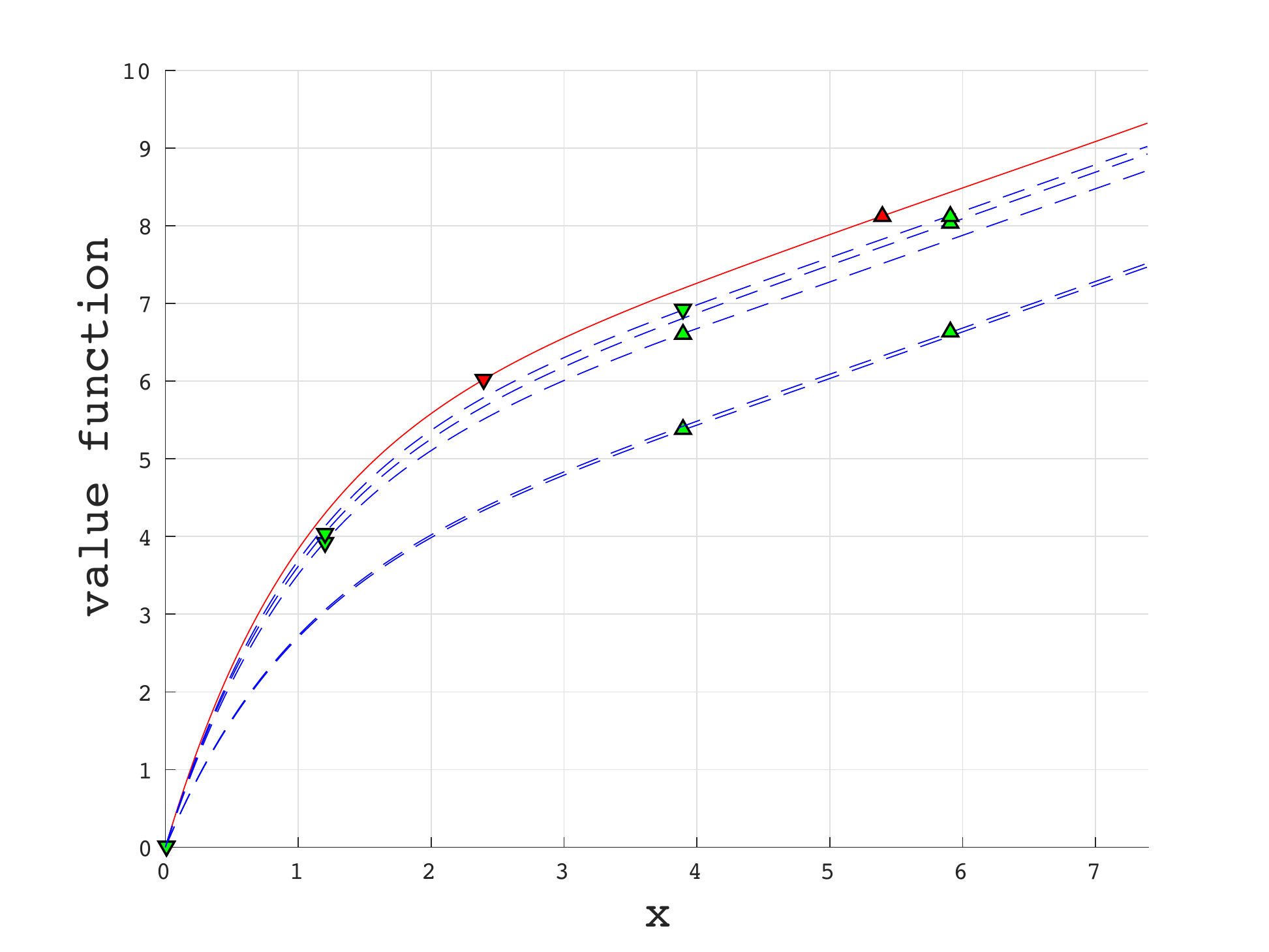} & \includegraphics[scale=0.4]{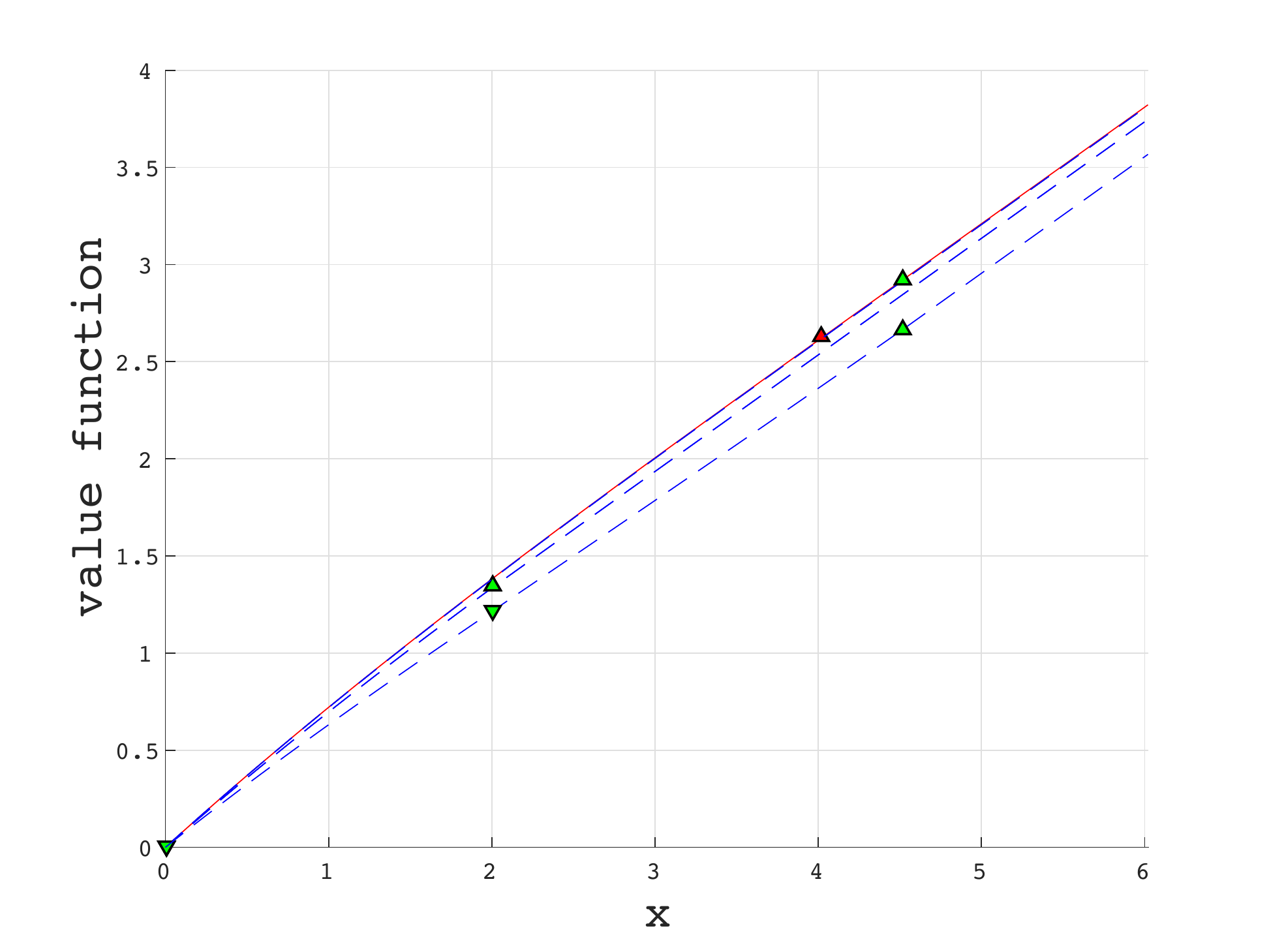}  \\
Case 1 & Case 2 \end{tabular}
\end{minipage}
\caption{Plots of $v_{a^*, b^*}$  (solid lines) along with the suboptimal expected NPVs $v_{a,b}$ (dotted lines) for $a = 0,a^*/2, (a^* + b^*)/2$ and $b = (a^*+b^*)/2,b^*+0.5$.  The up- and down-pointing triangles indicate the values at $a$ and $b$, respectively.} \label{plot_value_function}
\end{center}
\end{figure}

\subsection{Sensitivity and convergence with respect to $\beta$ and $r$}
Next, we study the behavior of the value function $v_{a^*, b^*}$ with respect to $\beta$ and $r$ to confirm the results discussed in Sections \ref{subsection_convergence} and \ref{section_other_cases}.  Here we use the same parameters as in \textbf{Case 1} above unless stated otherwise.

On the left of Figure \ref{plot_convergence}, we plot the value function $v = v_{a^*, b^*}$, for $\beta \in (r / (q+r), 1)$, along with $v=v_{a^*_p, \infty}$ and $v=v_{b^*_c, b^*_c}$, as in \eqref{value_periodic_a_bigger_0} and \eqref{value_function_continuous}, respectively, for the cases $\beta \leq r /(q+r)$ and $\beta = 1$.   This confirms that $v(x)$ increases with $\beta$ for each $x > 0$. As in Lemma \ref{lemma_convergence_beta_r} (1), as $\beta \downarrow r / (q+r)$, the value of $b^*$ goes to infinity and consequently  $v_{a^*, b^*}$ converges decreasingly to $v_{a^*_p, \infty}$ (which is independent of the value of $\beta$).  In contrast, as in Lemma \ref{lemma_convergence_beta_r} (3), as $\beta \uparrow 1$, the  distance between $a^*$ and $b^*$ shrinks to zero and consequently the value function $v_{a^*, b^*}$ converges increasingly to $v_{b^*_c, b^*_c}$ (when $\beta = 1$).

On the right of Figure \ref{plot_convergence}, we plot the value function $v = v_{a^*, b^*}$, for $r \in (0, q \beta / (1 - \beta))$, along with the value functions $v=v_{a^*_p, \infty}$ and $v=v_{b^*_c, b^*_c}$, as in \eqref{value_periodic_a_bigger_0} and \eqref{value_function_continuous}, respectively, for the cases $r = q \beta / (1 - \beta)$ (equivalently $\beta = r /(q+r)$) and $r = 0$.  This confirms that $v(x)$ increases with $r$ for each $x > 0$. As in Lemma \ref{lemma_convergence_beta_r} (2), as $r \uparrow  q \beta / (1 - \beta)$, the value of $b^*$ increases to infinity and consequently the value function $v_{a^*, b^*}$ converges increasingly to $v_{a^*_p, \infty}$ (which is independent of the value of $\beta$).  In contrast, as $r \downarrow 0$, periodic dividend opportunities vanish and hence the problem gets closer to the one defined in  \eqref{value_function_continuous}.  Consequently $b^*$ and $v_{a^*, b^*}$ converge to the values defined in \eqref{b_star_classical} and \eqref{value_function_continuous}, respectively. 


\begin{figure}[htbp]
\begin{center}
\begin{minipage}{1.0\textwidth}
\centering
\begin{tabular}{cc}
 \includegraphics[scale=0.4]{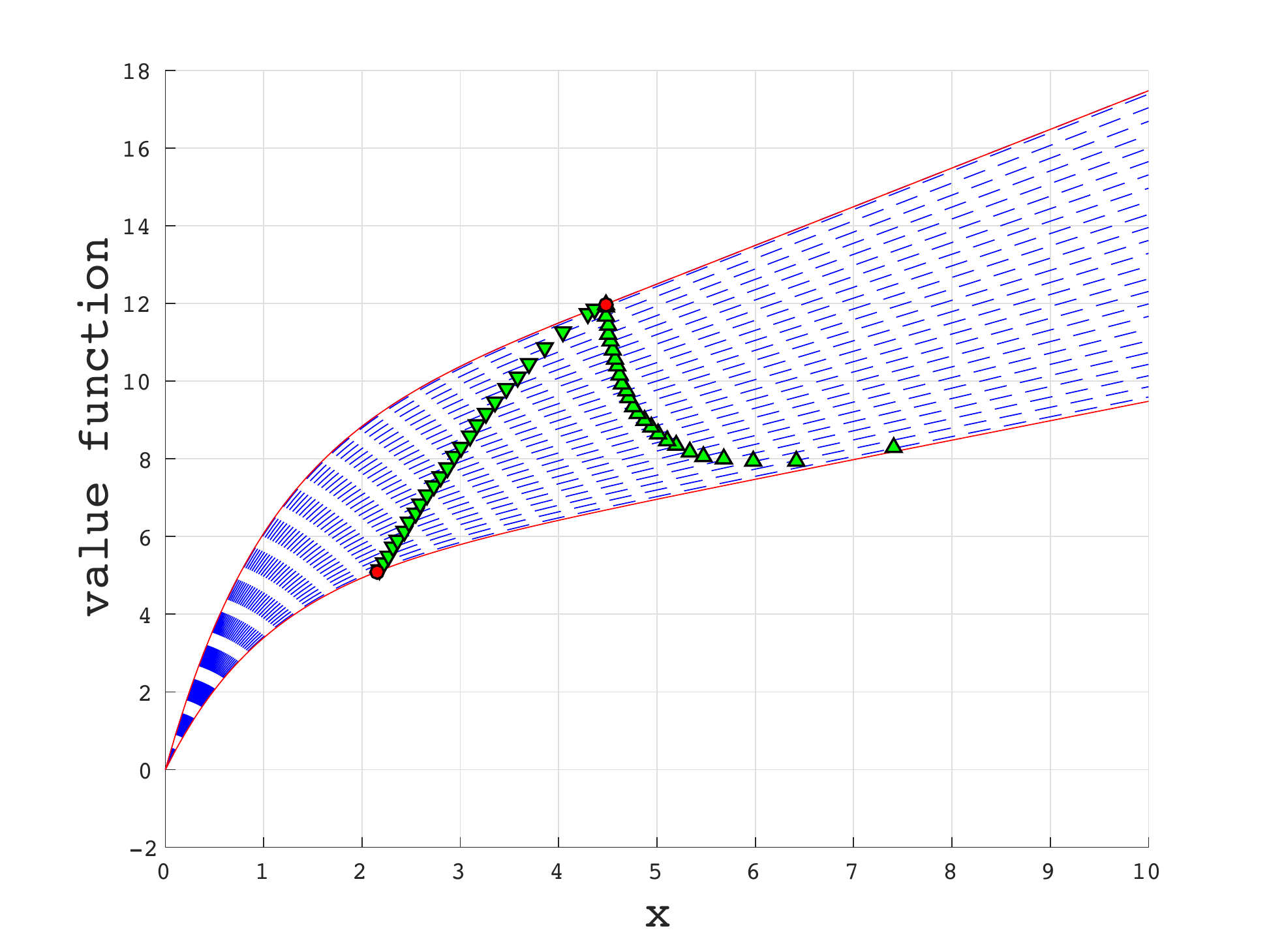} & \includegraphics[scale=0.4]{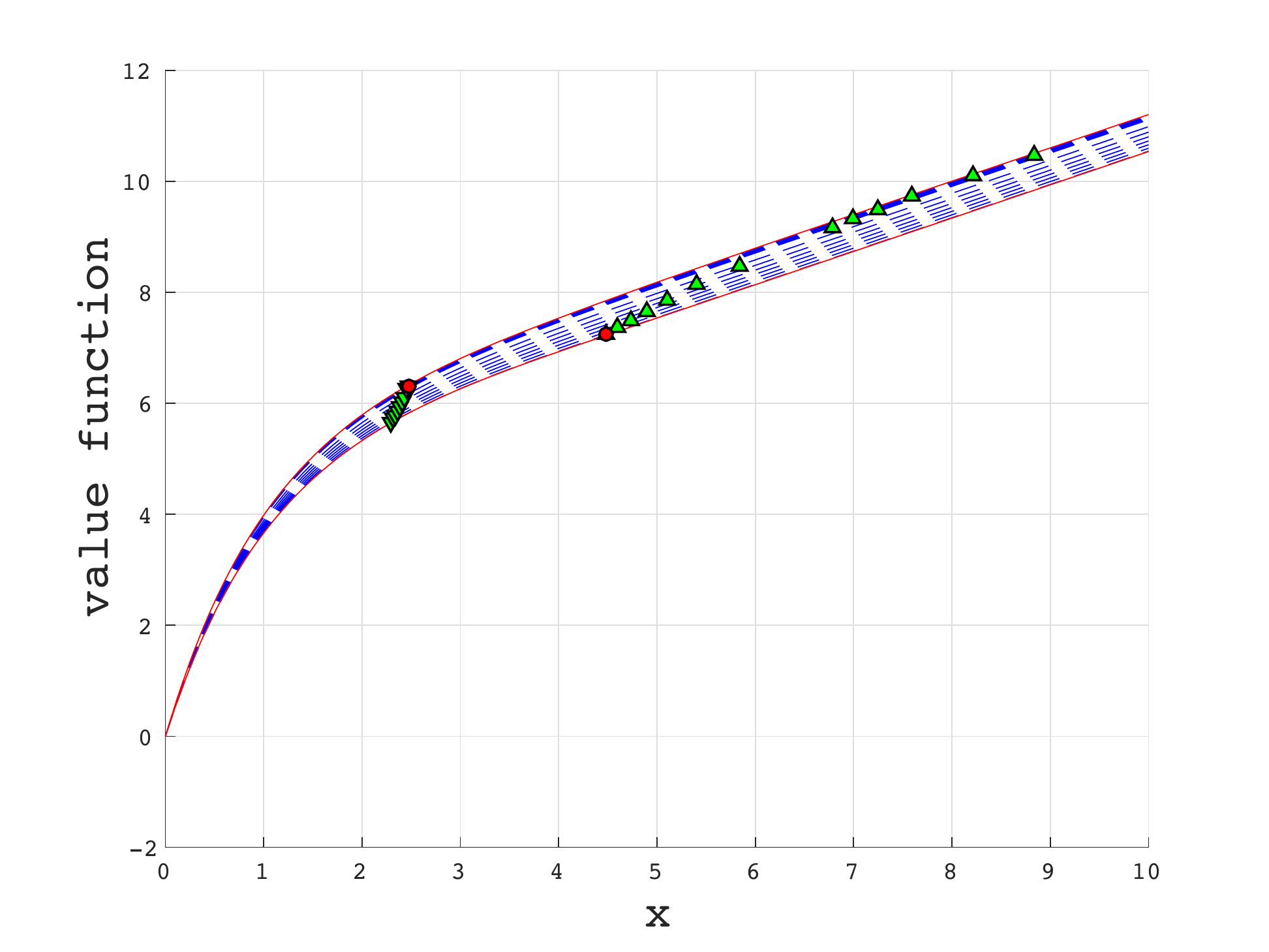}  \\
 convergence in $\beta$ & convergence in $r$ \end{tabular}
\end{minipage}
\caption{(Left) The plots of the value functions $v_{a^*,b^*}$ (dotted lines) for $\beta = 0.51$, $0.53$, ..., $0.97,$ $0.99$, $0.995$, along with the cases $\beta \leq r/(q+r) =0.5$ and $\beta = 1$ (solid lines).
The down- and up-pointing triangles indicate the points at $a^*$ and $b^*$, respectively. The circle on the upper solid line indicates the point at $b^*_c$ of $v_{b^*_c, b^*_c}$ and the one on the lower solid line indicates the point at $a^*_p$ of $v_{a^*_p, \infty}$. (Right) The plots of the value functions $v_{a^*,b^*}$ (dotted) for $r = 0.001$, $0.01$, $0.02$, ...., $0.07$, $0.071$, $0.072$, $0.073$, $0.074$ along with the cases $r \geq q \beta / (1 - \beta) = 0.075$ and $r = 0$ (solid lines).
The down- and up-pointing triangles indicate the points at $a^*$ and $b^*$, respectively. The circle on the lower solid line indicates the point at $b^*_c$ of $v_{b^*_c, b^*_c}$, and the one on the upper solid line indicates the point at $a^*_p$ of $v_{a^*_p, \infty}$.
} \label{plot_convergence}
\end{center}
\end{figure}

%

\appendix

\section{Proofs of Propositions \ref{per-con} and \ref{sin-con} and Theorem \ref{theorem_vf}} \label{appendix_proof} 

In this appendix, we prove Propositions \ref{per-con} and \ref{sin-con} as well as Theorem \ref{theorem_vf}. Define, for $a < b$ and $x \in \R$, 
		\begin{align*}
		\mathcal{I}^{(q,r)}_{a-b}(a-x, \theta)&:=Z_{a-b}^{(q,r)}(a-x, \theta)-W_{a-b}^{(q,r)}(a-x)\frac{Z^{(q+r)}(b-a, \theta)}{W^{(q+r)}(b-a)}, \quad \theta \geq 0,\\
		\mathcal{H}_{a-b}^{(q,r)}(a-x)
		&:= (q+r)^{-1} \Big( \frac {W_{a-b}^{(q,r)}(a-x)} {W^{(q+r)}(b-a)} \big[ r Z^{(q+r)}(b-a) + q \big] + r \big[ Z^{(q)}(a-x) - Z_{a-b}^{(q,r)} (a-x) \big] \Big) \\ 
		&= (q+r)^{-1} \Big(- r \mathcal{I}_{a-b}^{(q,r)} (a-x) + q \frac {W_{a-b}^{(q,r)}(a-x)} {W^{(q+r)}(b-a)} + r Z^{(q)}(a-x)\Big),
		\end{align*}
		with $\mathcal{I}^{(q,r)}_{a-b}(\cdot)$ = $\mathcal{I}^{(q,r)}_{a-b}(\cdot, 0)$ for short.
		Note that these have the following relation with $\mathcal{K}_{a-b}^{(q,r)}$, as in \eqref{def_K}: 
		\begin{align} \label{H_I_K_relation}
		\left(r+\frac{q}{Z^{(q+r)}(b-a)}\right) \frac{\mathcal{I}^{(q,r)}_{a-b}(a-x)}{r+q} = \mathcal{K}_{a-b}^{(q,r)} (a-x) - \mathcal{H}^{(q,r)}_{a-b}(a-x), \quad a<b, x \in \R.
		\end{align}

	For $0 \leq a < b$,  define the hitting times of the processes $X_r^a$  and $U^b$, as defined in Section  \ref{section_model}, as
	\begin{align*}
	\tau_b^+(r) &:= \inf \left\{ t > 0: X_r^a (t) > b\right\}, \quad \tau_0^-(r) := \inf \left\{ t > 0: X_r^a (t) < 0\right\}, \\
	\eta_a^- &:= \inf \left\{ t > 0: U^b (t) < a\right\}.	\end{align*}
Throughout this appendix, let $\mathbf{e}_r = T(1)$ be the first periodic dividend decision time, which is an independent exponential random variable with parameter $r$.

\subsection{Proof of Proposition \ref{per-con}.} \label{proof_per-con}
	Let us denote the left hand side of \eqref{L_p_identity} by $f_p(x,a,b)$.
	By applying the strong Markov property at $\tau_b^+(r)$, noticing that 
	\begin{align} \label{U_r_X_match}
	U_r^{(a,b)}(t) = X_r^a(t), \quad L_p^{(a,b)}(t) = L_p^a(t) \quad \textrm{for } t \in [0, \tau_b^+(r))\quad \textrm{and} \quad U^{(a,b)}_r (\tau_b^+(r)) = b,
	\end{align}
	and using Theorems 3.1 and 3.2 in \cite{APY},
	 we obtain, for $x \geq 0$,
	\begin{align} \label{f_p_recursion}
	\begin{split}
		f_p(x,a,b)&=\E_x\Big(\int_{[0,\tau_b^+(r)\wedge\tau_0^-(r)]}{\rm e}^{-qt} \diff L_p^{a}(t)\Big)+\E_x\left({\rm e}^{-q\tau_b^+(r)};\tau_b^+(r)<\tau_0^-(r)\right)f_p(b,a ,b)\\
		&=\frac{\mathcal{H}^{(q,r)}_{a-b}(a-x)}{\mathcal{H}^{(q,r)}_{a-b}(a)}h^{(q,r)}_{a-b}(a)-h^{(q,r)}_{a-b}(a-x)+\Big(\mathcal{I}^{(q,r)}_{a-b}(a-x)-\frac{\mathcal{H}^{(q,r)}_{a-b}(a-x)}{\mathcal{H}^{(q,r)}_{a-b}(a)}\mathcal{I}^{(q,r)}_{a-b}(a)\Big)f_p(b,a,b).
		\end{split}
	\end{align}
	Here,
	\begin{align*}
 	h^{(q,r)}_{a-b}(a-x)
 	&:= \frac r {q+r}\Big[\overline{Z}^{(q)}(a-x)+\frac{\overline{Z}^{(q+r)}(b-a)}{W^{(q+r)}(b-a)}W^{(q,r)}_{a-b}(a-x)-\overline{Z}^{(q,r)}_{a-b}(a-x) - (a-b) \mathcal{I}^{(q,r)}_{a-b}(a-x)\Big]. 
	\end{align*}
	In contrast,  by the strong Markov property, \eqref{Y_matches}, and the fact that $X$ has no negative jumps,
	\begin{align} \label{f_p_markov}
		f_p(b,a,b)=\E_b\left({\rm e}^{-q(\eta_a^-\wedge \mathbf{e}_r)}\right)f_p(a,a,b)+\E_b\left({\rm e}^{-q\mathbf{e}_r}(U^b (\mathbf{e}_r)-a);\mathbf{e}_r<\eta_a^-\right).
	\end{align}
Here, by the Laplace transform of $\eta_a^-$ given on  page 228 of \cite{K}, 
	\begin{multline}\label{per_div}
		\E_b\left({\rm e}^{-q(\eta_a^-\wedge \mathbf{e}_r)}\right)
		=\E_b\left({\rm e}^{-(q+r)\eta_a^-} \right)+r\E_b\Big(\int_0^{\eta_a^-}{\rm e}^{-(q+r)s} \diff s\Big) \\
		= \frac 1 {r+q} \Big( r + q \E_b [{\rm e}^{- (q+r)\eta_a^-}]\Big)
		=\frac{1}{r+q}\left(r+\frac{q}{Z^{(q+r)}(b-a)}\right),
	\end{multline}
	and, using the resolvent for $U^b$ given in Theorem 8.11 of \cite{K} and integration by parts, we obtain
	\begin{multline*}
		\E_b\left({\rm e}^{-q\mathbf{e}_r}(U^b(\mathbf{e}_r)-a);\mathbf{e}_r<\eta_a^-\right)=r\E_b\left(\int_0^{\eta_a^-}{\rm e}^{-(q+r)s}(U^b(s)-a) \diff s\right)\\
		=-r\int_{a-b}^{0}u\frac{W^{(q+r)}(-u)}{Z^{(q+r)}(b-a)} \diff u =-\frac{r}{Z^{(q+r)}(b-a)}\left((a-b)\overline{W}^{(q+r)}(b-a)+ \int_0^{b-a} \overline{W}^{(q+r)}(y) \diff y\right).
	\end{multline*}
	 Substituting these into \eqref{f_p_markov}, we have 
	\begin{align*}
		f_p(b,a,b)&=-\frac{r \big((a-b)\overline{W}^{(q+r)}(b-a)+\int_0^{b-a} \overline{W}^{(q+r)}(y) \diff y\big) }{Z^{(q+r)}(b-a)} +\frac{1}{r+q}\left(r+\frac{q}{Z^{(q+r)}(b-a)}\right)f_p(a,a,b).
	\end{align*}
	Hence, by  \eqref{f_p_recursion}, for all $x \geq 0$, 
	\begin{multline}\label{per_div_1}
		f_p(x,a,b)
		=\frac{\mathcal{H}^{(q,r)}_{a-b}(a-x)}{\mathcal{H}^{(q,r)}_a(a)}\tilde{h}^{(q,r)}_{a-b}(a)-\tilde{h}^{(q,r)}_{a-b}(a-x) \\
		+\frac{1}{r+q}\Big(\mathcal{I}^{(q,r)}_{a-b}(a-x)-\frac{\mathcal{H}^{(q,r)}_{a-b}(a-x)}{\mathcal{H}^{(q,r)}_{a-b}(a)}\mathcal{I}^{(q,r)}_{a-b}(a)\Big) \left(r+\frac{q}{Z^{(q+r)}(b-a)}\right)f_p(a,a,b),
	\end{multline}
		where 
			\begin{align*}
	\tilde{h}^{(q,r)}_{a-b}(a-x)&:=h^{(q,r)}_{a-b}(a-x)+\mathcal{I}^{(q,r)}_{a-b}(a-x)\frac{r}{Z^{(q+r)}(b-a)}\left((a-b)\overline{W}^{(q+r)}(b-a)+\int_0^{b-a} \overline{W}^{(q+r)}(y) \diff y \right) \\
	&= \frac r {q+r}\Big[\overline{Z}^{(q)}(a-x)-\overline{Z}^{(q,r)}_{a-b}(a-x)  + Z_{a-b}^{(q,r)}(a-x) \frac {\overline{Z}^{(q+r)}(b-a)}  {Z^{(q+r)}(b-a)}\Big].
	\end{align*}
	
	Now, setting $x=a$ and solving for $f_p(a,a,b)$ (using the fact that $\mathcal{H}_{a-b}^{(q,r)}(0) = 1$ and $\mathcal{I}_{a-b}^{(q,r)}(0) = \tilde{h}_{a-b}^{(q,r)}(0) = 0$), we obtain that, by \eqref{H_I_K_relation},
		\begin{align}
		f_p(a,a,b)=\frac{\tilde{h}^{(q,r)}_{a-b}(a)}{\mathcal{K}^{(q,r)}_{a-b}(a)}. \label{f_a_a}
	\end{align}


	Hence, substituting \eqref{f_a_a} into \eqref{per_div_1} and using \eqref{H_I_K_relation} (which holds for $x\in \R$), we obtain that 
	\begin{align*}
		f_p(x,a,b)
		&=\frac{\mathcal{H}^{(q,r)}_{a-b}(a-x)}{\mathcal{H}^{(q,r)}_{a-b}(a)}\tilde{h}^{(q,r)}_{a-b}(a)-\tilde{h}^{(q,r)}_{a-b}(a-x) +\left(\mathcal{K}^{(q,r)}_{a-b}(a-x)-\frac{\mathcal{H}^{(q,r)}_{a-b}(a-x)}{\mathcal{H}^{(q,r)}_{a-b}(a)} \mathcal{K}^{(q,r)}_{a-b}(a)\right)\frac{\tilde{h}^{(q,r)}_{a-b}(a)}{\mathcal{K}^{(q,r)}_{a-b}(a)}\\
		&=\frac{\mathcal{K}^{(q,r)}_{a-b}(a-x)}{\mathcal{K}^{(q,r)}_{a-b}(a)}\tilde{h}^{(q,r)}_{a-b}(a)-\tilde{h}^{(q,r)}_{a-b}(a-x).
	\end{align*}
This is equal to  the right hand side of \eqref{L_p_identity} because
		\begin{align*}
		\tilde{h}^{(q,r)}_{a-b}(a-x) 
		&=k_{a-b}^{(q,r)} (a-x) 
		+ \frac r q \overline{Z}^{(q+r)}(b-a) \mathcal{K}^{(q,r)}_{a-b}(a-x).
	\end{align*}

\subsection{Proof of Proposition \ref{sin-con}.} 
To prove  Proposition \ref{sin-con}, 
we first compute the following identity:
\[
g(x,a,b,\theta):=\E_x\left({\rm e}^{-q\tau_b^{+}(r)+\theta[b-X_r^a(\tau_b^{+}(r))]};\tau_b^+(r)<\tau_0^-(r)\right), \quad q > 0, a < b, \theta \geq 0, x \geq 0.
\]
The $\theta= 0$ case has already been considered in Theorem 3.1 of \cite{APY}. Here, we generalize this using Lemma 2.1 in \cite{LRZ} and Corollary 4.1 in \cite{APY}.
\begin{proposition} \label{prop_joint_laplace}
	For any $q>0$, $0 \leq a<b$, $\theta \geq 0$, and $x \geq 0$,
	\begin{align}\label{Lap_3}
		g(x,a,b,\theta)=\mathcal{I}^{(q,r)}_{a-b}(a-x,\theta)-\frac{\mathcal{H}^{(q,r)}_{a-b}(a-x)}{\mathcal{H}^{(q,r)}_{a-b}(a)} \mathcal{I}^{(q,r)}_{a-b}(a,\theta).
	\end{align}
\end{proposition}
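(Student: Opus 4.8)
The plan is to follow the same two-step structure used in the proof of Proposition \ref{per-con}: first establish a recursive identity for $g(x,a,b,\theta)$ by conditioning on the first excursion above $b$, then solve the resulting linear equation using the known boundary values $\mathcal{H}_{a-b}^{(q,r)}(0)=1$ and $\mathcal{I}_{a-b}^{(q,r)}(0,\theta)=0$. The key inputs are the fluctuation identities for the Parisian-reflected process $X_r^a$ from \cite{APY} (Theorems 3.1, 3.2 and Corollary 4.1), which give the $\theta=0$ exit quantities and the relevant resolvent/exit measures, together with Lemma 2.1 of \cite{LRZ}, which is the tool needed to incorporate the exponential weight $\theta[b - X_r^a(\tau_b^+(r))]$ at the overshoot.

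\textbf{Step 1 (recursion via the first up-crossing).} First I would condition on whether $\tau_b^+(r)$ occurs before or after the first periodic dividend epoch that pushes the process down. Using \eqref{U_r_X_match}-type identities, before $\tau_b^+(r)$ the process $X_r^a$ either up-crosses $b$ via an excursion of the underlying \lev process (no Parisian reflection has yet occurred past $b$), or it first experiences a Parisian reflection down to $a$ and then restarts. Splitting the event $\{\tau_b^+(r)<\tau_0^-(r)\}$ accordingly and applying the strong Markov property at the restart level $a$, I expect to obtain an equation of the form
\begin{align*}
g(x,a,b,\theta) = \big(\text{one-passage term weighted by }{\rm e}^{\theta[b-X]}\big) + \big(\text{probability of a Parisian reset}\big)\cdot g(a,a,b,\theta),
\end{align*}
where the one-passage term is evaluated by combining the $q$-resolvent of $X_r^a$ killed on exiting $(0,b)$ (Corollary 4.1 of \cite{APY}) with Lemma 2.1 of \cite{LRZ} to handle the $\theta$-weighting of the jump over $b$. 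Matching the $\theta=0$ case against Theorem 3.1 of \cite{APY} should identify the one-passage term as $\mathcal{I}_{a-b}^{(q,r)}(a-x,\theta)$ up to a multiple of $\mathcal{H}_{a-b}^{(q,r)}(a-x)$, exactly as in \eqref{f_p_recursion}.

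\textbf{Step 2 (solve and simplify).} Setting $x=a$ in the recursion and using $\mathcal{I}_{a-b}^{(q,r)}(0,\theta)=0$, $\mathcal{H}_{a-b}^{(q,r)}(0)=1$ solves for $g(a,a,b,\theta)$; substituting back and collecting terms should yield \eqref{Lap_3}. The main obstacle I anticipate is the bookkeeping in Step 1: correctly expressing the $\theta$-weighted first-passage-over-$b$ functional for the Parisian-reflected process, since $X_r^a$ is not a \lev process and the overshoot distribution at $\tau_b^+(r)$ must be assembled from the underlying scale-function identities (via the $Z^{(q)}(\cdot,\theta)$ and $W^{(q)}$ building blocks in \eqref{W_a_def}) rather than read off directly. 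Once the correct $\theta$-dependent one-passage identity is in hand, the algebra closes in the same way as Proposition \ref{per-con}, and consistency can be checked by specializing to $\theta=0$, where \eqref{Lap_3} must reduce to Theorem 3.1 of \cite{APY}.
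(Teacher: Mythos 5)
Your Step 1 is essentially the paper's: decompose at the first return of the process to the level $a$ (i.e., at $\tau_a^-\wedge\mathbf{e}_r$ for $x\ge a$, using that $X$ has no negative jumps), identify the coefficients via \eqref{laplace_in_terms_of_z}, Corollary 4.1 of \cite{APY} and Lemma 2.1 of \cite{LRZ}, and arrive at a linear equation of the form $g(x,a,b,\theta)=\mathcal{I}^{(q,r)}_{a-b}(a-x,\theta)+\mathcal{H}^{(q,r)}_{a-b}(a-x)\,g(a,a,b,\theta)+(\text{correction})$. The fatal problem is your Step 2. Setting $x=a$ in such a recursion does \emph{not} determine $g(a,a,b,\theta)$: because $0$ is regular for $(-\infty,0)$ for a spectrally positive L\'evy process, the first return to $a$ starting from $a$ is instantaneous, so at $x=a$ the coefficient of the unknown is exactly $\mathcal{H}^{(q,r)}_{a-b}(0)=1$ while the inhomogeneous term is $\mathcal{I}^{(q,r)}_{a-b}(0,\theta)=0$, and the equation collapses to the tautology $g(a,a,b,\theta)=g(a,a,b,\theta)$. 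The contrast with Proposition \ref{per-con}, which you cite as your model, is precisely that there the recursion is anchored at $\tau_b^+(r)$ and the coefficient multiplying the unknown at the anchoring point is not equal to $1$, so the linear equation is nondegenerate; here it is degenerate, and this degeneracy is the whole difficulty of the proposition.

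The paper breaks the degeneracy in two stages, neither of which appears in your proposal. First, it extends the identity to $0\le x<a$ by a second application of the strong Markov property at $\tau_a^+$ (on $\{\tau_a^+<\tau_0^-\}$), which produces the additional term $-\frac{W^{(q)}(a-x)}{W^{(q)}(a)}\big[\mathcal{I}^{(q,r)}_{a-b}(a,\theta)+\mathcal{H}^{(q,r)}_{a-b}(a)g(a,a,b,\theta)\big]$ in \eqref{Lap_1}; when $X$ has bounded variation, $W^{(q)}(0)>0$, so evaluating at $x=a$ now yields the nontrivial equation whose solution is $g(a,a,b,\theta)=-\mathcal{I}^{(q,r)}_{a-b}(a,\theta)/\mathcal{H}^{(q,r)}_{a-b}(a)$. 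When $X$ has unbounded variation, $W^{(q)}(0)=0$ and even this fails; the paper then computes $g(a,a,b,\theta)$ directly through the excursion measure $\mathbf{n}$ of $\tilde X=-X$ away from zero, following the proof of (3.21) in \cite{APY}, to recover the same formula. Without one of these devices (or some substitute, e.g.\ a limiting argument from $x<a$), your argument cannot identify $g(a,a,b,\theta)$ and therefore does not close.
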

\begin{proof} We prove this proposition in three steps.

(1) For $x\geq a$,  by the strong Markov property and the fact that $X$ has no negative jumps, we obtain
	\begin{align*}
		g(x,a,b,\theta)&=\E_x\left({\rm e}^{-q\tau_b^{+}+\theta[b-X(\tau_b^{+})]};\tau_b^+<\tau_a^{-}\wedge\mathbf{e}_r\right)+\E_x\left({\rm e}^{-q(\tau_a^{-}\wedge\mathbf{e}_r)};\tau_a^{-}\wedge\mathbf{e}_r<\tau_b^+\right)g(a,a,b,\theta).
	\end{align*}
	By \eqref{laplace_in_terms_of_z},
	for $\theta = 0$, 
	\begin{align*}
		\E_x\left({\rm e}^{-q(\tau_a^{-}\wedge\mathbf{e}_r)};\tau_a^{-}\wedge\mathbf{e}_r<\tau_b^+\right)
		&= \E_x\left({\rm e}^{-q \tau_a^{-}};\tau_a^{-}<\tau_b^+, \tau_a^{-}< \mathbf{e}_r\right)+ \E_x\left({\rm e}^{-q \mathbf{e}_r};\tau_a^{-}\wedge \tau_b^{+}>\mathbf{e}_r\right)\\&= \E_x\left({\rm e}^{-(q+r) \tau_a^{-}};\tau_a^{-}<\tau_b^+\right)+ \frac  r {q+r} - \frac r {q+r}\E_x\left({\rm e}^{-(q+r) (\tau_a^{-}\wedge \tau_b^{+})}\right) \\
		&=\frac{r}{r+q}(1-Z^{(q+r)}(b-x))+\frac{W^{(q+r)}(b-x)}{W^{(q+r)}(b-a)}(1+r\overline{W}^{(q+r)}(b-a)).
	\end{align*}
This, together with the second equality in \eqref{laplace_in_terms_of_z}, gives
	\begin{multline} \label{h_decomposition_markov}
		g(x,a,b,\theta)=Z^{(q+r)}(b-x,\theta)-W^{(q+r)}(b-x)\frac{Z^{(q+r)}(b-a,\theta)}{W^{(q+r)}(b-a)} \\
		+\left(\frac{r}{r+q}(1-Z^{(q+r)}(b-x))+\frac{W^{(q+r)}(b-x)}{W^{(q+r)}(b-a)}(1+r\overline{W}^{(q+r)}(b-a))\right)g(a,a,b,\theta).
	\end{multline}
We note that $X_r^a = X$ on $[0, \tau_a^+]$ and $Z^{(q)}(a-y) = 1$ for $y \geq a$.  Hence,  for $x \geq 0$, by the strong Markov property and \eqref{h_decomposition_markov},
	\begin{align*}
	\begin{split}
		g(x,a,b,\theta) &=\E_x\left({\rm e}^{-q\tau_a^+}g(X(\tau_a^+), a,b,\theta);\tau_a^+<\tau_0^-\right) \\
		&= \E_x\left({\rm e}^{-q\tau_a^+}Z^{(q+r)}(b-X(\tau_a^+),\theta) ;\tau_a^+<\tau_0^- \right)
		\\ &-\E_x\left({\rm e}^{-q\tau_a^+} W^{(q+r)}(b-X(\tau_a^+)) ;\tau_a^+<\tau_0^- \right)\frac{Z^{(q+r)}(b-a,\theta)}{W^{(q+r)}(b-a)} \\
		&+\Bigg(\frac{r}{r+q} \left[ \E_x\left({\rm e}^{-q\tau_a^+} Z^{(q)} (a- X(\tau_a^+));\tau_a^+<\tau_0^-\right) -\E_x\left({\rm e}^{-q\tau_a^+}Z^{(q+r)}(b-X(\tau_a^+));\tau_a^+<\tau_0^-\right) \right] \\ &+\frac{\E_x\left({\rm e}^{-q\tau_a^+}W^{(q+r)}(b-X(\tau_a^+)) ;\tau_a^+<\tau_0^-\right)}{W^{(q+r)}(b-a)}(1+r\overline{W}^{(q+r)}(b-a))\Bigg)g(a,a,b,\theta).
		\end{split}
		\end{align*}
Here, using
 Corollary 4.1 in \cite{APY} and Lemma 2.1 in \cite{LRZ}, we have
\begin{align*} 
\begin{split}
		\E_x\left({\rm e}^{-q\tau_a^+}Z^{(q+r)}(b-X(\tau_a^+),\theta);\tau_a^+<\tau_0^- \right) &=Z^{(q,r)}_{a-b}(a-x,\theta)-\frac{W^{(q)}(a-x)}{W^{(q)}(a)}Z^{(q,r)}_{a-b}(a,\theta),  \quad \theta \geq 0, \\
		\E_x\left({\rm e}^{-q\tau_a^+}W^{(q+r)}(b-X(\tau_a^+));\tau_a^+<\tau_0^- \right) &=
		W^{(q,r)}_{a-b}(a-x) -\frac{W^{(q)}(a-x)}{W^{(q)}(a)}W^{(q,r)}_{a-b}(a), \\
		\E_x\left({\rm e}^{-q\tau_a^+} Z^{(q)} (a- X(\tau_a^+));\tau_a^+<\tau_0^-\right) &= Z^{(q)}(a-x)-\frac{W^{(q)}(a-x)}{W^{(q)}(a)}Z^{(q)}(a).
		\end{split}
	\end{align*}
	Hence,
		\begin{multline}  \label{Lap_1}
		g(x,a,b,\theta) 
		= \mathcal{I}_{a-b}^{(q,r)}(a-x, \theta) +\mathcal{H}_{a-b}^{(q,r)}(a-x)g(a,a,b,\theta) \\
		-\frac{W^{(q)}(a-x)}{W^{(q)}(a)} \big[\mathcal{I}_{a-b}^{(q,r)}(a, \theta) +\mathcal{H}_{a-b}^{(q,r)}(a)g(a,a,b,\theta) \big].
		\end{multline}
	
(2)  (i) For the case where $X$ is of bounded variation, by setting $x=a$ in \eqref{Lap_1} and solving for $g(a,a,b,\theta)$ (using $\mathcal{H}_{a-b}^{(q,r)}(0) = 1$, $\mathcal{I}_{a-b}^{(q,r)}(0, \theta) = 0$ and $W^{(q)} (0) > 0$), 
we obtain
	\begin{align}\label{Lap_2}
		g(a,a,b,\theta)=-\frac{\mathcal{I}^{(q,r)}_{a-b}(a,\theta)}{\mathcal{H}^{(q,r)}_{a-b}(a)}.
	\end{align}
	
(ii) 
Now suppose $X$ is of unbounded variation.  To show that we obtain the same identity \eqref{Lap_2}, it suffices to modify the proof of (3.21) in \cite{APY} (which considers the $\theta = 0$ case).  Define the spectrally negative \lev process $\tilde{X}:=-X$, and construct the dual  process $\tilde{X}_r^0$ with the lower Parisian reflection barrier $0$, as in \cite{APY}, and its hitting times $\tilde{\tau}_{c}^-(r) := \inf \{ t> 0: \tilde{X}_r^0(t) < c\}$ and $\tilde{\tau}_{c}^+(r) := \inf\{ t > 0: \tilde{X}_r^0(t) > c\}$ for $c \in \R$.  Then, we have
	\begin{align*}
	g(a,a,b,\theta)
	=\E\left({\rm e}^{-q\tilde{\tau}_{a-b}^-(r)+\theta(\tilde{X}_r^0(\tilde{\tau}_{a-b}^-(r))+b-a)};\tilde{\tau}_{a-b}^-(r)<\tilde{\tau}_{a}^+(r) \right).
	\end{align*}
	
	Now, following the proof of (3.21) from Section 5  in \cite{APY}, we arrive at
	\begin{align}
	g(a,a,b,\theta)=\frac{W^{(q)}(b)}{\mathcal{H}^{(q,r)}_{a-b}(a)} \mathcal{N} \label{h_decomposition_unbounded}
	\end{align}
	with
		\begin{align*}
\mathcal{N} :=\textbf{n}\left({\rm e}^{-q(\tilde{\tau}_0^- + \tilde{\tau}_{a-b}^- \circ \Theta_{\tilde{\tau}_0^-})+\theta( \tilde{X}(\tilde{\tau}_0^- +\tilde{\tau}_{a-b}^-+\Theta_{\tilde{\tau}_0^-})+b-a)};\tilde{\tau}_{0}^-< \tilde{\tau}_{a}^+,\tilde{\tau}_{a-b}^- \circ\Theta_{\tilde{\tau}_0^-}<\mathbf{e}_r\wedge (\tilde{\tau}_{0}^+ \circ\Theta_{\tilde{\tau}_0^-})\right),
	\end{align*}
	where $\tilde{\tau}^+$ and $\tilde{\tau}^-$ are defined analogously to $\tau^+$ and $\tau^-$ for $\tilde{X}$ and  $\Theta$ is the time-shift operator and   $\textbf{n}$ denotes the excursion measure of the spectrally negative  L\'evy process $\tilde{X}$ away from zero (see \cite{PPR15b} for more details). Here, 
	\begin{align*}
	\mathcal{N}
	&=\textbf{n}\left({\rm e}^{-q\tilde{\tau}_0^-}\E_{\tilde{X}(\tilde{\tau}_0^-)}\left({\rm e}^{-q\tilde{\tau}_{a-b}^-+\theta(\tilde{X}(\tilde{\tau}_{a-b}^-)+b-a)}; \tilde{\tau}_{a-b}^- <\mathbf{e}_r \wedge \tilde{\tau}_{0}^+ \right);\tilde{\tau}_0^-< \tilde{\tau}_{a}^+ \right) \\
		&= \textbf{n} \left({\rm e}^{-q \tilde{\tau}_0^-}\left(Z^{(q+r)}(\tilde{X}(\tilde{\tau}_0^-)+b-a,\theta)-\frac{W^{(q+r)}(\tilde{X}(\tilde{\tau}_0^-)+b-a)}{W^{(q+r)}(b-a)}Z^{(q+r)}(b-a,\theta)\right);\tilde{\tau}_0^-<\tilde{\tau}_{a}^+\right)\\
	&=-\frac{1}{W^{(q)}(b)}\left(Z_{a-b}^{(q,r)}(a,\theta)-\frac{Z^{(q+r)}(b-a,\theta)}{W^{(q+r)}(b-a)}W^{(q,r)}_{a-b}(a)\right)=-\frac{\mathcal{I}_{a-b}^{(q,r)}(a,\theta)}{W^{(q)}(b)},
	 \end{align*}		
	 where we have used the strong Markov property for the first equality, \eqref{laplace_in_terms_of_z} for the second equality and a slight modification of Lemma 6.3 in \cite{YP2016b} for the last equality. By substituting this into \eqref{h_decomposition_unbounded}, we obtain \eqref{Lap_2}.

(3) Now,  by using \eqref{Lap_2}  in \eqref{Lap_1}, the proof is complete for both the bounded and unbounded variation cases.

\end{proof}

Using the proposition above, we have the following.
\begin{corollary} \label{corollary_overshoot}
	For any $q>0$, $0 \leq  a<b$, and $x \geq 0$,
	\begin{align*}
		\E_x\left({\rm e}^{-q\tau_b^{+}(r)}[X_r^a(\tau_b^{+}(r))-b];\tau_b^+(r)<\tau_0^-(r)\right)=\frac{\mathcal{H}^{(q,r)}_{a-b}(a-x)}{\mathcal{H}^{(q,r)}_{a-b}(a)}\mathcal{J}^{(q,r)}_{a-b}(a)-\mathcal{J}^{(q,r)}_{a-b}(a-x),
	\end{align*}
	where
	\begin{multline*}
		\mathcal{J}^{(q,r)}_{a-b}(a-x):=\overline{Z}^{(q,r)}_{a-b}(a-x)-\psi'(0+)\overline{W}^{(q,r)}_{a-b}(a-x) \\ -\frac{W^{(q,r)}_{a-b}(a-x)}{W^{(q+r)}(b-a)}\left(\overline{Z}^{(q+r)}(b-a)-\psi'(0+)\overline{W}^{(q+r)}(b-a)\right).
	\end{multline*}
\end{corollary}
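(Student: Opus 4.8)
The plan is to obtain the claimed identity for the discounted overshoot $\E_x\left({\rm e}^{-q\tau_b^{+}(r)}[X_r^a(\tau_b^{+}(r))-b];\tau_b^+(r)<\tau_0^-(r)\right)$ by differentiating the joint Laplace transform $g(x,a,b,\theta)$ from Proposition \ref{prop_joint_laplace} with respect to $\theta$ and evaluating at $\theta = 0$. Indeed, since $g(x,a,b,\theta)=\E_x\left({\rm e}^{-q\tau_b^{+}(r)+\theta[b-X_r^a(\tau_b^{+}(r))]};\tau_b^+(r)<\tau_0^-(r)\right)$, a formal differentiation under the expectation gives
\[
\frac{\partial}{\partial \theta}g(x,a,b,\theta)\Big|_{\theta=0}=\E_x\left({\rm e}^{-q\tau_b^{+}(r)}[b-X_r^a(\tau_b^{+}(r))];\tau_b^+(r)<\tau_0^-(r)\right),
\]
which is precisely the negative of the quantity we want. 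So the left-hand side of the corollary equals $-\frac{\partial}{\partial \theta}g(x,a,b,\theta)\big|_{\theta=0}$.

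Next I would differentiate the right-hand side of \eqref{Lap_3}, namely $\mathcal{I}^{(q,r)}_{a-b}(a-x,\theta)-\tfrac{\mathcal{H}^{(q,r)}_{a-b}(a-x)}{\mathcal{H}^{(q,r)}_{a-b}(a)} \mathcal{I}^{(q,r)}_{a-b}(a,\theta)$, in $\theta$ at $\theta=0$. Since $\mathcal{H}^{(q,r)}_{a-b}$ does not depend on $\theta$, I only need $\frac{\partial}{\partial\theta}\mathcal{I}^{(q,r)}_{a-b}(a-x,\theta)\big|_{\theta=0}$, and by the definition $\mathcal{I}^{(q,r)}_{a-b}(a-x,\theta)=Z_{a-b}^{(q,r)}(a-x,\theta)-W_{a-b}^{(q,r)}(a-x)\frac{Z^{(q+r)}(b-a,\theta)}{W^{(q+r)}(b-a)}$ this reduces to computing $\frac{\partial}{\partial\theta}Z_{a-b}^{(q,r)}(a-x,\theta)\big|_{\theta=0}$ and $\frac{\partial}{\partial\theta}Z^{(q+r)}(b-a,\theta)\big|_{\theta=0}$. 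From the definitions of $Z^{(q)}(x,\theta)$ and $Z_{a-b}^{(q,r)}(a-x,\theta)$ in \eqref{W_a_def}, together with the fact that $\frac{\partial}{\partial\theta}[{\rm e}^{\theta x}(1+(q-\psi(\theta))\int_0^x {\rm e}^{-\theta z}W^{(q)}(z)\diff z)]\big|_{\theta=0}$ produces $\overline{Z}^{(q)}(x)-\psi'(0+)\overline{W}^{(q)}(x)$ (here one uses $\psi(0)=0$, so the term $-\psi'(0+)\int_0^x W^{(q)}(z)\diff z$ appears, and the ${\rm e}^{\theta x}$ prefactor contributes $x\cdot Z^{(q)}(x)$ whose primitive combines into $\overline{Z}^{(q)}$ after the bookkeeping in the $Z_{a-b}^{(q,r)}$-kernel is carried through). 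Collecting terms, $\frac{\partial}{\partial\theta}\mathcal{I}^{(q,r)}_{a-b}(a-x,\theta)\big|_{\theta=0}$ becomes $-\mathcal{J}^{(q,r)}_{a-b}(a-x)$ with $\mathcal{J}^{(q,r)}_{a-b}$ exactly as stated, and after negating, the corollary follows.

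The two steps that require care are (a) justifying the interchange of differentiation and expectation in $\theta$, and (b) the bookkeeping in the $\theta$-derivative of the modified scale functions. For (a), since $b-X_r^a(\tau_b^+(r)) \leq 0$ on the event of interest (the process overshoots $b$ upward, so $X_r^a(\tau_b^+(r))>b$), the integrand ${\rm e}^{-q\tau_b^+(r)+\theta[b-X_r^a(\tau_b^+(r))]}\mathbf 1_{\{\cdot\}}$ is, for $\theta$ in a neighbourhood of $0$, dominated uniformly by an integrable quantity; more carefully one checks that $\E_x\left({\rm e}^{-q\tau_b^+(r)}(X_r^a(\tau_b^+(r))-b);\tau_b^+(r)<\tau_0^-(r)\right)<\infty$, which is guaranteed because this equals $\mathcal{J}^{(q,r)}_{a-b}$-type expressions that are finite, and then dominated convergence applies — this finiteness can also be seen directly since the overshoot of a spectrally positive \lev process at an upcrossing has moments controlled via the \lev measure's integrability assumption. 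For (b), I would work from the representation in Remark \ref{remark_Z_q_r_another_expression} rather than \eqref{W_a_def} if it simplifies the $\theta$-differentiation, and use the known identities $\frac{\partial}{\partial \theta}Z^{(q)}(x,\theta)\big|_{\theta=0}=\overline Z^{(q)}(x)-\psi'(0+)\overline W^{(q)}(x)$ (and its $(q+r)$-analogue) which are standard; these follow by differentiating the defining integral formula and using $\psi(0)=0$. The main obstacle is really the careful algebraic reconciliation — verifying that the combination of $\theta$-derivatives of $Z_{a-b}^{(q,r)}$, $W_{a-b}^{(q,r)}$, $Z^{(q+r)}$ reassembles into the stated $\mathcal{J}^{(q,r)}_{a-b}(a-x)=\overline{Z}^{(q,r)}_{a-b}(a-x)-\psi'(0+)\overline{W}^{(q,r)}_{a-b}(a-x) -\frac{W^{(q,r)}_{a-b}(a-x)}{W^{(q+r)}(b-a)}\left(\overline{Z}^{(q+r)}(b-a)-\psi'(0+)\overline{W}^{(q+r)}(b-a)\right)$, which is a bounded-length but somewhat delicate computation; everything else is routine.
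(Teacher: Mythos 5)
Your route is exactly the paper's: differentiate the identity \eqref{Lap_3} of Proposition \ref{prop_joint_laplace} in $\theta$ at $\theta=0+$, recognize the left-hand side as minus the discounted overshoot, and identify the $\theta$-derivative of $\mathcal{I}^{(q,r)}_{a-b}(\cdot,\theta)$ with $\mathcal{J}^{(q,r)}_{a-b}(\cdot)$ using the known derivative of $Z^{(q+r)}(\cdot,\theta)$ at $\theta=0+$. There is, however, a sign slip in your assembly step. Since, as you yourself record, $\partial_\theta Z^{(q+r)}(y,\theta)\big|_{\theta=0+}=\overline Z^{(q+r)}(y)-\psi'(0+)\overline W^{(q+r)}(y)$ with a \emph{plus} sign, and since $\mathcal{I}^{(q,r)}_{a-b}(a-x,\theta)$ depends on $\theta$ only linearly through $Z^{(q+r)}(\cdot,\theta)$ (via the kernels in \eqref{W_a_def}), one obtains
\begin{align*}
\partial_\theta \mathcal{I}^{(q,r)}_{a-b}(a-x,\theta)\big|_{\theta=0+}= +\,\mathcal{J}^{(q,r)}_{a-b}(a-x),
\end{align*}
not $-\mathcal{J}^{(q,r)}_{a-b}(a-x)$ as you assert. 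As written, your chain of signs (``the derivative of $\mathcal{I}$ is $-\mathcal{J}$, then negate'') would yield $\mathcal{J}^{(q,r)}_{a-b}(a-x)-\frac{\mathcal{H}^{(q,r)}_{a-b}(a-x)}{\mathcal{H}^{(q,r)}_{a-b}(a)}\mathcal{J}^{(q,r)}_{a-b}(a)$, the negative of the stated identity; with the correct plus sign, negating $\partial_\theta g\big|_{\theta=0+}$ gives exactly the corollary. This is the only substantive correction needed: the rest of your argument, including the justification of differentiating under the expectation (which the paper leaves implicit), is sound, and the finiteness of the first moment of the overshoot that you invoke is indeed guaranteed here because \eqref{assumption_drift} forces $\int_{[1,\infty)}z\,\Pi(\diff z)<\infty$.
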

\begin{proof}
	Because 
		$\partial_\theta Z^{(q+r)}(y,\theta)\big|_{\theta=0+}=\overline{Z}^{(q+r)}(y)-\psi'(0+)\overline{W}^{(q+r)}(y)$  for $y \in \R$ (see the proof of Corollary 3.5 in \cite{YP2016b}), we have
	\begin{align}\label{der_0_Zv}
		\partial_\theta \mathcal{I}^{(q,r)}_{a-b}(a-x,\theta)\big|_{\theta=0 +}= \mathcal{J}^{(q,r)}_{a-b}(a-x).
	\end{align}
	The result now follows by differentiating \eqref{Lap_3}, setting $\theta=0 +$, and using \eqref{der_0_Zv}.
\end{proof}
\subsubsection{Proof of Proposition \ref{sin-con}.}
First, denote the left hand side of \eqref{L_c_identity} by $f_c(x,a,b)$.
	By applying the strong Markov property at $\tau_b^+(r)$,   Proposition \ref{prop_joint_laplace},  Corollary \ref{corollary_overshoot}, and \eqref{U_r_X_match}, we have, for $x \geq 0$,
	\begin{align} \label{f_c_markov_1}
	\begin{split}
		f_c&(x,a,b)=\E_x\left({\rm e}^{-q\tau_b^{+}(r)}[X_r^a(\tau_b^{+}(r))-b +f_c(b,a,b)];\tau_b^+(r)<\tau_0^-(r)\right) \\ 
		&=\frac{\mathcal{H}^{(q,r)}_{a-b}(a-x)}{\mathcal{H}^{(q,r)}_{a-b}(a)}\mathcal{J}^{(q,r)}_{a-b}(a)-\mathcal{J}^{(q,r)}_{a-b}(a-x)+\Big(\mathcal{I}^{(q,r)}_{a-b}(a-x)-\frac{\mathcal{H}^{(q,r)}_{a-b}(a-x)}{\mathcal{H}^{(q,r)}_{a-b}(a)}\mathcal{I}^{(q,r)}_{a-b}(a)\Big)f_c(b,a,b).
		\end{split}
	\end{align}
	On the other hand, by applying the strong Markov property at  $\eta_a^- \wedge \mathbf{e}_r$ and because $L_c^{(a,b)}=L_c^b$ on $[0, \eta_a^-\wedge\mathbf{e}_r]$, and $X$ does not have negative jumps, 
	\begin{align} \label{f_c_markov}
		f_c(b,a,b)&=\E_b\Big(\int_0^{\eta_a^-\wedge\mathbf{e}_r}{\rm e}^{-qt} \diff L_{c}^{b}(t)\Big)+\E_b\left({\rm e}^{-q(\eta_a^-\wedge\mathbf{e}_r)}\right)f_c(a,a,b).
	\end{align}
Here, as in the proof of Theorem 1 in \cite{APP2007}, the first term on the right hand side is 
	\begin{align}\label{sin_div_1}
		\E_b\Big(\int_0^{\eta_a^-\wedge\mathbf{e}_r}{\rm e}^{-qt} \diff L^{b}_{c}(t)\Big)=\E_b\Big(\int_0^{\eta_a^-}{\rm e}^{-(q+r)t} \diff L^{b}_{c}(t)\Big)=\frac{\overline{Z}^{(q+r)}(b-a)- \psi'(0+) \overline{W}^{(q+r)}(b-a)}{Z^{(q+r)}(b-a)}.
	\end{align}
	Hence, using \eqref{per_div} and \eqref{sin_div_1} in \eqref{f_c_markov},
	\begin{align*}
		f_c(b,a,b)=\frac{\overline{Z}^{(q+r)}(b-a)- \psi'(0+) \overline{W}^{(q+r)}(b-a)}{Z^{(q+r)}(b-a)}+\frac{1}{r+q}\left(r+\frac{q}{Z^{(q+r)}(b-a)}\right)f_c(a,a,b).
	\end{align*}
	Substituting this in \eqref{f_c_markov_1} and using \eqref{H_I_K_relation} (which holds for all $x\in\R$), \begin{align}\label{sin_div_2}
	\begin{split}
		f_c(x,a,b)&=\frac{\mathcal{H}^{(q,r)}_{a-b}(a-x)}{\mathcal{H}^{(q,r)}_{a-b}(a)} \tilde{i}^{(q,r)}_{a-b}(a)- \tilde{i}^{(q,r)}_{a-b}(a-x) \\
		&+\left(\mathcal{I}^{(q,r)}_{a-b}(a-x)-\frac{\mathcal{H}^{(q,r)}_{a-b}(a-x)}{\mathcal{H}^{(q,r)}_{a-b}(a)}\mathcal{I}^{(q,r)}_{a-b}(a)\right)  \frac{1}{r+q}\left(r+\frac{q}{Z^{(q+r)}(b-a)}\right)f_c(a,a,b) \\
		&=\frac{\mathcal{H}^{(q,r)}_{a-b}(a-x)}{\mathcal{H}^{(q,r)}_{a-b}(a)} \tilde{i}^{(q,r)}_{a-b}(a)- \tilde{i}^{(q,r)}_{a-b}(a-x) +\left(\mathcal{K}^{(q,r)}_{a-b}(a-x) -\frac{\mathcal{H}^{(q,r)}_{a-b}(a-x)}{\mathcal{H}^{(q,r)}_{a-b}(a)}\mathcal{K}^{(q,r)}_{a-b}(a)\right)  f_c(a,a,b),
		\end{split}
	\end{align}
		where 
	\begin{align*}
		\tilde{i}_{a-b}^{(q,r)}(a-x)&:=-\mathcal{I}^{(q,r)}_{a-b}(a-x) \frac{\overline{Z}^{(q+r)}(b-a)- \psi'(0+) \overline{W}^{(q+r)}(b-a)}{Z^{(q+r)}(b-a)} +\mathcal{J}^{(q,r)}_{a-b}(a-x) \\
		&=\overline{Z}^{(q,r)}_{a-b}(a-x)-\psi'(0+)\overline{W}^{(q,r)}_{a-b}(a-x)-\frac{Z^{(q,r)}_{a-b}(a-x)}{Z^{(q+r)}(b-a)}\left(\overline{Z}^{(q+r)}(b-a)-\psi'(0+)\overline{W}^{(q+r)}(b-a)\right). 
		\end{align*}
%
Hence setting $x=a$ and solving for $f_c(a,a,b)$ (using $\mathcal{H}_{a-b}^{(q,r)}(0) =\mathcal{K}_{a-b}^{(q,r)}(0) = 1$ and $\mathcal{I}_{a-b}^{(q,r)}(0) = \tilde{i}_{a-b}^{(q,r)}(0) = 0$) gives us that $f_c(a,a,b)= {\displaystyle \tilde{i}_{a-b}^{(q,r)}(a)}  / {\mathcal{K}_{a-b}^{(q,r)}(a)}$.
Substituting this  in \eqref{sin_div_2}, we obtain
	\begin{align*}
		f_c(x,a,b)
		&=\frac{\mathcal{K}^{(q,r)}_{a-b}(a-x)}{\mathcal{K}^{(q,r)}_{a-b}(a)}\tilde{i}^{(q,r)}_{a-b}(a)-\tilde{i}^{(q,r)}_{a-b}(a-x).
	\end{align*}
	This equals the right hand side of \eqref{L_c_identity} because 
	\begin{align*}
		\tilde{i}_{a-b}^{(q,r)}(a-x)	
		= i_{a-b}^{(q,r)} (a-x) -\frac {(r+q) \mathcal{K}^{(q,r)}_{a-b}(a-x) } q\left(\overline{Z}^{(q+r)}(b-a)-\psi'(0+)\overline{W}^{(q+r)}(b-a)\right).
	\end{align*}
\subsection{Proof of Theorem \ref{theorem_vf}.}
	\par Using Propositions \ref{per-con} and \ref{sin-con}, 
	\begin{align}
		v_{a,b}(x)
		&=\frac{\mathcal{K}^{(q,r)}_{a-b}(a-x)}{\mathcal{K}^{(q,r)}_{a-b}(a)}\left(k^{(q,r)}_{a-b}(a)+\beta i^{(q,r)}_{a-b}(a)\right)-\left(k^{(q,r)}_{a-b}(a-x)+\beta i^{(q,r)}_{a-b}(a-x)\right). \label{v_a_b_sum}
	\end{align}
	To show that this reduces to \eqref{vf_ff}, it is sufficient to show that
			\begin{align} \label{k_relation}
			k^{(q,r)}_{a-b}(a-x)+\beta i^{(q,r)}_{a-b}(a-x)&= \Gamma(a,b;x)-\frac{\beta\psi'(0+)}{q}\mathcal{K}_{a-b}^{(q,r)} (a-x)Z^{(q+r)}(b-a);
		\end{align}
		substituting this in \eqref{v_a_b_sum} and after simplification, we obtain \eqref{vf_ff}.
	
We have
		\begin{multline} \label{diff_h_g_Gamma}
			k^{(q,r)}_{a-b}(a-x)+\beta i^{(q,r)}_{a-b}(a-x) - \Gamma(a,b,x) \\
			= - \frac {\beta \psi'(0+)} {q}\left(q\overline{W}_{a-b}^{(q,r)}(a-x)+rZ^{(q)}(a-x)\overline{W}^{(q+r)}(b-a)+ 1\right).
		\end{multline}
		By \eqref{W_a_def},
		$Z_{a-b}^{(q,r)} (a-x) = (q+r) \overline{W}_{a-b}^{(q,r)} (a-x) + 1 - r \overline{W}^{(q)}(a-x)$, and hence
		\begin{align}\label{aux_1}
			\frac 1 {q+r} \Big[ Z_{a-b}^{(q,r)} (a-x) - 1 + r \overline{W}^{(q)}(a-x) \Big] = \overline{W}_{a-b}^{(q,r)} (a-x).
		\end{align}
		Now, using (\ref{aux_1}), it is straightforward to obtain
		\begin{align*}
			q&\overline{W}_{a-b}^{(q,r)}(a-x)+rZ^{(q)}(a-x)\overline{W}^{(q+r)}(b-a)\\&=\frac{q}{q+r}\Big[ Z_{a-b}^{(q,r)} (a-x) - 1 + r \overline{W}^{(q)}(a-x) \Big]+rZ^{(q)}(a-x)\overline{W}^{(q+r)}(b-a)\\
			&=\mathcal{K}_{a-b}^{(q,r)} (a-x)Z^{(q+r)}(b-a)-1.
		\end{align*}
		
		Substituting this into \eqref{diff_h_g_Gamma}, we have \eqref{k_relation}, as desired.
		
\section{Proof of Lemma \ref{lemma_derivatives_Gamma_K}} \label{proof_lemma_derivatives}

For $a < b$ and $x \in \R$, integration by parts gives
\begin{align*} 
\begin{split}
		\overline{Z}_{a-b}^{(q,r)}(a-x) 
		&=\overline{Z}^{(q+r)}(b-x) \\ &- r \Big[   \overline{Z}^{(q+r)}(b-a) \overline{W}^{(q)} (a-x) + \int_{0}^{a-x}  \overline{W}^{(q)}(a-x-y) Z^{(q+r)}(y-a+b)\diff y \Big], \\
		Z_{a-b}^{(q,r)}(a-x) 
		&=Z^{(q+r)}(b-x) \\ &- r \Big[   Z^{(q+r)}(b-a) \overline{W}^{(q)} (a-x) + (q+r) \int_{0}^{a-x}  \overline{W}^{(q)}(a-x-y) W^{(q+r)}(y-a+b)\diff y \Big].
		\end{split}
\end{align*}

Hence, for $x \neq a$, 
\begin{align*}
		&\frac \partial {\partial x}\overline{Z}_{a-b}^{(q,r)}(a-x) \\ &=-Z^{(q+r)}(b-x) + r \Big[    \overline{Z}^{(q+r)}(b-a) W^{(q)} (a-x) + \int_{0}^{a-x}  W^{(q)}(a-x-y) Z^{(q+r)}(y-a+b)\diff y \Big]  \\
		&=-Z^{(q+r)}(b-x) + r \Big[    \overline{Z}^{(q+r)}(b-a) W^{(q)} (a-x)  + Z^{(q+r)}(b-a) \overline{W}^{(q)} (a-x) \\ &+ (q+r) \int_{0}^{a-x}  \overline{W}^{(q)}(a-x-y) W^{(q+r)}(y-a+b)\diff y \Big],
\end{align*}
and
\begin{align*} 
\begin{split}
		\frac \partial {\partial x}&Z_{a-b}^{(q,r)}(a-x) =-(q+r) W^{(q+r)}(b-x) + r \Big[    Z^{(q+r)}(b-a) W^{(q)} (a-x) \\ &+ (q+r) \int_{0}^{a-x}  W^{(q)}(a-x-y) W^{(q+r)}(y-a+b)\diff y \Big]  \\
		&=-(q+r) W^{(q+r)}(b-x) + r \Big[   Z^{(q+r)}(b-a) W^{(q)} (a-x) \\ & + (q+r) \Big( W^{(q+r)}(b-a) \overline{W}^{(q)} (a-x) + \int_{0}^{a-x}  \overline{W}^{(q)}(a-x-y) W^{(q+r) \prime}(y-a+b)\diff y  \Big)\Big].
\end{split}
\end{align*}
Hence, we have the first equalities of \eqref{H_derivatives} and \eqref{Gamma_derivatives}. By differentiating these further, the rest of the results are immediate.



\begin{thebibliography}{99}
	
	
	
	\bibitem{Asmussen_2004}\sc Asmussen, S., Avram, F., and Pistorius, M.R. \rm  Russian and {A}merican put options under exponential phase-type {L}\'evy models. {\it Stochastic Process. Appl.} {\bf 109(1)}, 79--111, (2004).
	
	
	
	
	\bibitem{ATW}\sc Avanzi, B., Tu, V., and Wong, B. \rm  On optimal periodic dividend strategies in the dual model with diffusion. {\it Insur. Math. Econ.} {\bf 55}, 210-224, (2014).
	
	\bibitem{ATW2}\sc Avanzi, B., Tu, V., and Wong, B. \rm  On the interface between optimal periodic and continuous strategies in the presence of transaction costs. {\it ASTIN Bull.} {\bf 46(3)}, 709-746, (2016).
	
	
	
	\bibitem{APP2007}\sc Avram, F., Palmowski, Z., and Pistorius, M.R. \rm  On the optimal dividend problem for a spectrally negative L\'evy process. {\it Ann. Appl. Probab.} {\bf 17}, 156-180, (2007).
	
	\bibitem{APY}\sc Avram, F., P\'erez, J.L., and Yamazaki, K. \rm  Spectrally negative {L}\'evy processes with Parisian reflection below and classical reflection above. {\it  Stochastic Process. Appl.}, {\bf 128 (1)}, 255-290, (2018).

	
	
	\bibitem{BKY}\sc Bayraktar, E., Kyprianou, A.E., and Yamazaki, K. \rm On optimal dividends in the dual model. {\it ASTIN Bull.} {\bf 43(3)}, 359-372, (2013).
	
	
	
	
	
	\bibitem{Chan2011}
	\sc Chan, T., Kyprianou, A.E., and Savov, M. \rm Smoothness of scale functions for spectrally negative \lev processes. {\it Probab. Theory Relat. Fields} {\bf 150}, 691-708, (2011).
	
		\bibitem{Yamazaki2013} \sc Egami, M. and Yamazaki, K. \rm Precautionary measures for credit risk management in jump models. {\it  Stochastics}, {\bf 85(1)}, 111--143, (2013).
		
				\bibitem{Yamazaki_smooth} \sc Egami, M. and Yamazaki, K. \rm On the continuous and smooth fit principle for optimal stopping problems in spectrally negative L\'evy models. {\it  Adv. in Appl. Probab.}, {\bf 46(1)}, 139--167, (2014).
		
		
	\bibitem{Egami_Yamazaki_2010_2}\sc Egami, M. and Yamazaki, K. \rm Phase-type fitting of scale functions for spectrally negative {L}\'evy processes.  {\it J. Comput. Appl. Math.} {\bf 264}, 1--22, (2014).
	
	\bibitem{HPY}\sc Hern\'andez-Hern\'andez, D., P\'erez, J.L., and Yamazaki, K. \rm Optimality of refraction strategies for spectrally negative {L}\'evy processes.  {\it SIAM J. Control Optim.} {\bf 54(3)}, 1126-1156, (2016).
	
	
	
	
	
	
	
	
	
	
	
	

	
	
	
	
	
	
	
	
	
	
	\bibitem{K} \sc Kyprianou, A.E. {\it Introductory lectures on
		fluctuations of L\'evy processes with applications.} \rm Springer,
	Berlin, (2006).
	
	
	
		
	\bibitem{KKR}\sc  Kuznetsov, A., Kyprianou, A.E., and Rivero, V. \rm The
	theory of scale functions for spectrally negative L\'evy processes. {\it L\'evy Matters II, Springer Lecture Notes in Mathematics}, (2013).
	
	
	
	
	
		\bibitem{leung2015analytic} \sc Leung, T., Yamazaki, K., and Zhang, H. \rm  An analytic recursive method for optimal multiple stopping: Canadization and phase-type fitting.  {\it  Int. J.
			Theor. Appl. Finance} {\bf 18(5)},  1550032, (2015).
	
	\bibitem{Loeffen2008} \sc \sc Loeffen, R.  \rm  On optimality of the barrier strategy in de Finetti's dividend problem for spectrally negative L\'evy processes. {\it Ann. Appl. Probab.} {\bf 18(5)}, 1669-1680 (2008).
		

	\bibitem{LRZ}\sc Loeffen, R.L., Renaud, J-F. and Zhou, X. 
	\rm Occupation times of intervals until first passage times for spectrally negative L\'evy processes with applications.  {\it  Stochastic Process. Appl.}, {\bf 124 (3)}, 1408--1435, (2014).
	
	
	
	
	
	
	

		
		
		   		\bibitem{PPR15b}\sc Pardo, J.C., P\'erez, J.L. and  Rivero, V. \rm The excursion measure away from zero for spectrally negative {L}\'evy processes. {\it Ann. Inst. H. Poincar\'e,} (forthcoming).
		
					\bibitem{YP2015}\sc P\'erez, J.L.\ and Yamazaki, K. \rm On the refracted-reflected spectrally negative L\'evy processes.  {\it  Stochastic Process. Appl.}, {\bf 128 (1)}, 306--331, (2018).

	
			\bibitem{YP2016}\sc P\'erez, J.L.\ and Yamazaki, K. \rm On the optimality of periodic barrier strategies for a spectrally positive L\'evy process.  {\it  Insur. Math. Econ.}, {\bf 77}, 1--13, (2017).
			
			
					\bibitem{YP2016b}\sc P\'erez, J.L.\ and Yamazaki, K. \rm Mixed periodic-classical barrier strategies for L\'evy risk process.  {\it arXiv} {\bf 1609.01671}, (2016).
					
	
	\bibitem{protter} \sc Protter, P.  {\it Stochastic integration and
		differential equations. 2nd Edition}, Springer, Berlin, (2005).
	
	
	
			\bibitem{Yamazaki2016} \sc Yamazaki, K. \rm Inventory control for spectrally positive {L}\'evy demand processes. {\it Math. Oper. Res.}, {\bf 42(1)}, 212--237, (2017).
			
						\bibitem{Yamazaki_contraction} \sc Yamazaki, K. \rm Contraction options and optimal multiple-stopping in spectrally negative {L}\'evy models. {\it Appl. Math. Optim.}, {\bf 72(1)},147--185, (2015).

	
	
		
	
	
	
	
\end{thebibliography}
	\end{document}